\newcommand\cA{{\mathcal A}}
\newcommand\cF{{\mathcal F}}
\newcommand\cG{{\mathcal G}}
\newcommand\cM{{\mathcal M}}
\newcommand\cP{{\mathcal P}}
\newcommand\cQ{{\mathcal Q}}
\newcommand\cR{{\mathcal R}}
\newcommand\cS{{\mathcal S}}
\newcommand\cT{{\mathcal T}}
\theoremstyle{plain}
\newtheorem{theorem}{Theorem}[section]
\newtheorem{lemma}[theorem]{Lemma}
\newtheorem{corollary}[theorem]{Corollary}
\newtheorem{conjecture}[theorem]{Conjecture}
\newtheorem{proposition}[theorem]{Proposition}
\theoremstyle{definition}
\newcommand\tref[1]{Theorem~\ref{thm:#1}}
\newcommand\cref[1]{Corollary~\ref{cor:#1}}
\newcommand\cjref[1]{Conjecture~\ref{conj:#1}}
\title{Generalized forbidden subposet problems}
\author{D\'aniel Gerbner\thanks{Research supported by the J\'anos Bolyai Research Fellowship of the Hungarian Academy of Sciences and the National Research, Development and Innovation Office -- NKFIH under the grant the grant K 116769.}, Bal\'azs Keszegh\thanks{Research supported by  the National Research, Development and Innovation Office -- NKFIH under the grant the grant K 116769.}, Bal\'azs Patk\'os\thanks{Research supported by the J\'anos Bolyai Research Fellowship of the Hungarian Academy of Sciences and the National Research, Development and Innovation Office -- NKFIH under the grants SNN 116095 and K 116769.} \\
\small Alfr\'ed R\'enyi Institute of Mathematics, Hungarian Academy of Sciences\\
\small P.O.B. 127, Budapest H-1364, Hungary.}
\begin{document}
\maketitle

\begin{abstract}
A subfamily $\{F_1,F_2,\dots,F_{|P|}\}\subseteq \cF$ of sets is a copy of a poset $P$ in $\cF$ if there exists a bijection $\phi:P\rightarrow \{F_1,F_2,\dots,F_{|P|}\}$ such that whenever $x \le_P x'$ holds, then so does $\phi(x)\subseteq \phi(x')$. For a family $\cF$ of sets, let $c(P,\cF)$ denote the number of copies of $P$ in $\cF$, and we say that $\cF$ is $P$-free if $c(P,\cF)=0$ holds. For any two posets $P,Q$ let us denote by $La(n,P,Q)$ the maximum number of copies of $Q$ over all $P$-free families $\cF \subseteq 2^{[n]}$, i.e. $\max\{c(Q,\cF): \cF \subseteq 2^{[n]}, c(P,\cF)=0 \}$. 

This generalizes the well-studied parameter $La(n,P)=La(n,P,P_1)$ where $P_1$ is the one element poset, i.e. $La(n,P)$ is the largest possible size of a $P$-free family. The quantity $La(n,P)$ has been determined (precisely or asymptotically) for many posets $P$, and in all known cases an asymptotically best construction can be obtained 
by taking as many middle levels as possible without creating a copy of $P$.

In this paper we consider the first instances of the problem of determining $La(n,P,Q)$. We find its value when $P$ and $Q$ are small posets, like chains, forks, the $N$ poset and diamonds. Already these special cases  show that the extremal families are completely different from those in the original $P$-free cases: sometimes not middle or consecutive levels maximize $La(n,P,Q)$ and sometimes no asymptotically extremal family is the union of levels.

Finally, we determine (up to a polynomial factor) the maximum number of copies of complete multi-level posets in $k$-Sperner families. The main tools for this are the profile polytope method and two extremal set system problems that are of independent interest: we maximize the number of $r$-tuples $A_1,A_2,\dots, A_r \in \cA$ over all antichains $\cA\subseteq 2^{[n]}$ such that  (i) $\cap_{i=1}^rA_i=\emptyset$, (ii) $\cap_{i=1}^rA_i=\emptyset$ and $\cup_{i=1}^rA_i=[n]$.
\end{abstract}

\section{Introduction}
The very first theorem of extremal finite set theory is due to Sperner \cite{S} and states that if $\cF$ is a family of subsets of $[n]=\{1,2,\dots,n\}$ such that no two sets in $\cF$ are in inclusion, then $|\cF|\le \binom{n}{\lceil n/2\rceil}$ holds, and equality is achieved if and only if $\cF$ consists of all the $\lceil n/2\rceil$-element or all the  $\lfloor n/2\rfloor$-element subsets of $[n]$. Families consisting of all the $k$-element subsets of $[n]$ are called \textit{(full) levels} and we introduce the notation $\binom{[n]}{k}=\{F\subseteq [n]:|F|=k\}$ for them. Sperner's theorem was generalized by Erd\H os \cite{Er} to the case when $\cF$ is not allowed to contain $k+1$ mutually inclusive sets, i.e. a $(k+1)$-chain. He showed that among such families the ones consisting of $k$ middle levels are the largest. In the early eighties, Katona and Tarj\'an \cite{KT} introduced a generalization of the problem and started to consider determining the size of the largest family of subsets of $[n]$ that does not contain a configuration defined by inclusions. Such problems are known as forbidden subposet problems and are widely studied (see the recent survey \cite{GLi}).

In this paper, we propose even further generalizations: we are interested in the maximum number of copies of a given configuration $Q$ in families that do not contain a forbidden subposet $P$. Before giving the precise definitions, let us mention that similar problems were studied by Alon and Shikhelman \cite{AS} in the context of graphs when they considered the problem of finding the most number of copies of a graph $T$ that an $H$-free graph can contain.

\vskip 0.3truecm

\noindent \textbf{Definition.} Let $P$ be an arbitrary poset and $\cF$ a family of sets. We say that $\cG\subseteq \cF$ is \textit{a copy of $P$ in $\cF$} if there exists a bijection $\phi:P\rightarrow \cG$ such that whenever $x \le_P x'$ holds, then so does $\phi(x)\subseteq \phi(x')$. Let $c(P,\cF)$ denote the number of copies of $P$ in $\cF$ and for any pair of posets $P,Q,$ let us define
\[
La(n,P,Q)=\max\{c(Q,\cF): \cF\subseteq 2^{[n]}, c(P,\cF)=0 \},
\]
and for families of posets $\cP,\cQ$ let us define
\[
La(n,\cP,\cQ)=\max\left\{\sum_{Q\in\cQ}c(Q,\cF):\cF\subseteq 2^{[n]}, \forall P\in\cP  \hskip 0.2truecm c(P,\cF)=0\right\}.
\]
We denote by $P_k$ the chain of length $k$, i.e. the completely ordered poset on $k$ elements. In particular, $P_1$ is the poset with one element.  
 Let us state Erd\H os's above mentioned result with our notation.

\begin{theorem}[Sperner \cite{S} for $k=1$, Erd\H os \cite{Er} for general $k$]
\label{thm:sperner}
For every positive integer $k$ the following holds:
$$La(n,P_{k+1}, P_1)=\sum_{i=1}^k\binom{n}{\lfloor\frac{n-k}{2}\rfloor+i}.$$
\end{theorem}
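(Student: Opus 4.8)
The plan is to prove matching lower and upper bounds for $La(n,P_{k+1},P_1)$, both organized around the symmetric chain decomposition of the Boolean lattice $2^{[n]}$ (de Bruijn--Tengbergen--Kruyswijk).

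For the lower bound, put $a=\lfloor\frac{n-k}{2}\rfloor$ and consider the family $\cF_0=\bigcup_{i=1}^{k}\binom{[n]}{a+i}$, the union of the $k$ consecutive levels lying as symmetrically as possible around level $n/2$. A chain of sets of pairwise different sizes meets each level in at most one set, so every chain inside $\cF_0$ has at most $k$ elements; hence $\cF_0$ is $P_{k+1}$-free and $c(P_1,\cF_0)=|\cF_0|=\sum_{i=1}^k\binom{n}{a+i}$, which is the claimed value. (It is worth recording, though not needed for the statement as phrased, that these $k$ binomial coefficients are the $k$ largest among $\binom{n}{0},\dots,\binom{n}{n}$, which is why the family should be thought of as $k$ middle levels.)

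For the upper bound, recall that $2^{[n]}$ partitions into pairwise disjoint chains, one for each set counted by $\binom{n}{\lfloor n/2\rfloor}$, in such a way that a chain starting at level $j$ ends at level $n-j$ — so it occupies exactly the levels $j,j+1,\dots,n-j$ and has length $n-2j+1$ — and the number of such chains is $\binom{n}{j}-\binom{n}{j-1}$ for $0\le j\le n/2$. If $\cF$ is $P_{k+1}$-free, then since the members of $\cF$ lying on a single chain $C$ again form a chain, $\cF$ contains at most $\min(k,\ell(C))$ of them, where $\ell(C)$ is the length of $C$. Summing over the decomposition,
\[
|\cF|\ \le\ \sum_{\text{chains }C}\min\bigl(k,\ell(C)\bigr).
\]
It remains to identify the right-hand side with $\sum_{i=1}^k\binom{n}{a+i}$, and the cleanest way is to observe that $\cF_0$ already attains it: one checks that $|C\cap\cF_0|=\min(k,\ell(C))$ for every chain $C$ of the decomposition — a short chain ($\ell(C)\le k$) is swallowed by the $k$ central levels, while a long one passes through exactly $k$ of them — so summing over all chains gives $|\cF_0|=\sum_C\min(k,\ell(C))$, and combined with the displayed inequality this yields $La(n,P_{k+1},P_1)=|\cF_0|$.

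The only point that needs genuine care is this last identity $|C\cap\cF_0|=\min(k,\ell(C))$: for a chain of length $\ell=n-2j+1$ one must verify, for both parities of $n-k$, that the block $[a+1,a+k]$ of central levels contains the interval $[j,n-j]$ when $\ell\le k$ and is contained in it when $\ell>k$. This reduces to the elementary inequalities $2a\ge n-k-1$ and $2a\le n-k$, which hold by the definition $a=\lfloor\frac{n-k}{2}\rfloor$, so it is bookkeeping rather than a real obstacle. An alternative to comparing with $\cF_0$ is to expand $\sum_C\min(k,\ell(C))=\sum_j\min(k,n-2j+1)\bigl(\binom{n}{j}-\binom{n}{j-1}\bigr)$ directly and collapse the telescoping sum; I would prefer the comparison argument since it avoids the messier computation.
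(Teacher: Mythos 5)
Your proof is correct and complete. Note, however, that the paper does not prove this statement at all: Theorem~1.1 is stated as a classical result and attributed to Sperner and Erd\H{o}s, with the reader sent to \cite{S} and \cite{Er}. So there is no in-paper argument to compare against; what you have supplied is a self-contained proof via the de Bruijn--Tengbergen--Kruyswijk symmetric chain decomposition, which is the standard modern route to Erd\H{o}s's theorem. Both halves check out: the union of the $k$ levels $a+1,\dots,a+k$ with $a=\lfloor\frac{n-k}{2}\rfloor$ is $P_{k+1}$-free of the claimed size, and the key identity $|C\cap\cF_0|=\min(k,\ell(C))$ for every symmetric chain $C$ spanning levels $j$ through $n-j$ reduces, as you say, to $2a\ge n-k-1$ and $2a\le n-k$, which hold for both parities of $n-k$; this turns the chain-counting upper bound $\sum_C\min(k,\ell(C))$ into exactly $|\cF_0|$ and closes the argument. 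The only caveats are cosmetic: you use ``length'' of a chain for its number of elements (consistent with the paper's convention that $P_k$ has $k$ elements, but worth flagging), and the parenthetical claim that the chosen levels carry the $k$ largest binomial coefficients is not needed and could simply be dropped.
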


\vskip 0.2truecm
The area of forbidden subposet problems deals with determining $La(n,P)=La(n,P,P_1)$ the maximum size of a $P$-free family. There are not many results in the literature where other posets are counted. Katona \cite{K} determined the maximum number of 2-chains (copies of $P_2$) in a 2-Sperner ($P_3$-free) family $\cF\subseteq 2^{[n]}$. This was reproved in \cite{P} and generalized by Gerbner and Patk\'os in \cite{GP}.

\begin{theorem}[\cite{GP}]
\label{thm:gp}
For any $l> k$ the quantity $La(n,P_l,P_k)$ is attained for some family $\cF$ that is the union of $l-1$ levels. Moreover, $La(n,P_{k+1},P_k)=\binom{n}{i_k}\cdot \binom{i_k}{i_{k-1}} \cdot \dots \cdot
\binom{i_2}{i_1}$, where $i_1 <i_2<\dots<i_k<n$ are chosen arbitrarily such that the values $i_1,i_2-i_1,i_3-i_2,\dots,i_k-i_{k-1},n-i_k$ differ by at most one.
\end{theorem}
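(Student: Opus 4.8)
The statement has two parts: the structural claim that $La(n,P_l,P_k)$ is attained by a union of $l-1$ levels, and the exact evaluation of $La(n,P_{k+1},P_k)$. I would prove the exact formula by a self-contained double-counting argument, and the structural claim by a symmetrization / profile-polytope argument; the latter is where the work lies.

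For $La(n,P_{k+1},P_k)$: a $P_{k+1}$-free family $\cF$ is $k$-Sperner, so every maximal chain $\cC$ of $2^{[n]}$ satisfies $|\cF\cap\cC|\le k$ and hence contains at most one copy of $P_k$ from $\cF$. I would double-count pairs (maximal chain $\cC$, copy of $P_k$ from $\cF$ lying on $\cC$): there are at most $n!$ of them, while this number also equals $\sum_C a_1!\,(a_2-a_1)!\cdots(n-a_k)!$, the sum over all copies $C=\{A_1\subsetneq\cdots\subsetneq A_k\}$ in $\cF$ (with $|A_j|=a_j$) of the number of maximal chains through $C$. Writing $M(a_1,\dots,a_k)=\binom{n}{a_1,\,a_2-a_1,\,\dots,\,n-a_k}$ for the corresponding multinomial coefficient, this is the LYM-type inequality $\sum_{C}1/M(a_1,\dots,a_k)\le 1$. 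Since the number of copies of $P_k$ in $2^{[n]}$ with a prescribed size sequence $a_1<\cdots<a_k$ is exactly $\binom{n}{a_k}\binom{a_k}{a_{k-1}}\cdots\binom{a_2}{a_1}=M(a_1,\dots,a_k)$, we get
\[
c(P_k,\cF)=\sum_C 1=\sum_C \frac{M(a_1,\dots,a_k)}{M(a_1,\dots,a_k)}\le \Big(\max_{b_1<\cdots<b_k}M(b_1,\dots,b_k)\Big)\sum_C\frac{1}{M(a_1,\dots,a_k)}\le \max_{b_1<\cdots<b_k}M(b_1,\dots,b_k),
\]
with equality when $\cF$ is the union of the $k$ full levels realizing the maximum (such a family is $k$-Sperner, hence $P_{k+1}$-free). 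Thus $La(n,P_{k+1},P_k)=\max_{b_1<\cdots<b_k}M(b_1,\dots,b_k)$, a multinomial coefficient with $k+1$ parts $b_1,b_2-b_1,\dots,n-b_k$ summing to $n$; the usual local exchange — moving one unit from a part to a smaller part differing from it by at least two strictly increases the multinomial — shows the maximum is attained precisely when these parts are as equal as possible, which is the claimed expression.

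For the structural claim I would show that among all $P_l$-free (equivalently, $(l-1)$-Sperner) families maximizing $c(P_k,\cdot)$ there is a union of $l-1$ full levels. The problem is invariant under the $S_n$-action on $[n]$, which suggests symmetrizing; the clean way to organize this is the profile-polytope method. One records, for a family $\cF$, a suitable chain profile — morally, for each size sequence $a_1<\cdots<a_k$ the number of copies of $P_k$ in $\cF$ using exactly those levels — so that $c(P_k,\cF)$ becomes a linear function of the profile. The set of profiles of $(l-1)$-Sperner families (after taking the convex hull and symmetrizing) is a polytope, and the heart of the matter is to show that its vertices are realized by unions of $l-1$ levels; a linear objective then attains its maximum at such a vertex, and since adding or deleting whole levels never violates $(l-1)$-Spernerness as long as at most $l-1$ levels are used, exactly $l-1$ levels are taken.

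The main obstacle is precisely this vertex characterization. The difficulty is that $c(P_k,\cF)$ is genuinely not a function of the level-occupancy vector $\big(|\cF\cap\binom{[n]}{i}|\big)_i$ alone — how sets on different levels are nested matters — and the natural crude bounds are too weak to force a union of levels. For instance, with $p_i=|\cF\cap\binom{[n]}{i}|/\binom{n}{i}$ one can show $c_{\mathbf a}(\cF)\le M(\mathbf a)\min_j p_{a_j}$, but maximizing $\sum_{\mathbf a}M(\mathbf a)\min_j p_{a_j}$ over $\{0\le p_i\le 1,\ \sum_i p_i\le l-1\}$ is attained at a \emph{fractional} point and overshoots the true maximum by a constant factor, because this relaxation ignores that one cannot have many levels all contributing nearly maximally at once. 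So the real argument must exploit the normalized matching (LYM) property of $2^{[n]}$ — via a chain/flow decomposition realizing the extreme chain profiles by level families, or equivalently via a compression argument performed directly on $\cF$ — to capture that rigidity. Once the reduction to unions of levels is in place, only a finite optimization over the choice of the $l-1$ levels remains (for $l=k+1$ this is the multinomial optimization above).
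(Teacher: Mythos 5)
Your LYM-type double count for the second half of the statement is correct and complete: a $P_{k+1}$-free family meets every maximal chain of $2^{[n]}$ in at most $k$ sets, so each maximal chain carries at most one copy of $P_k$, which yields $\sum_C 1/M(\mathbf{a}_C)\le 1$ and hence $c(P_k,\cF)\le\max_{\mathbf{b}} M(\mathbf{b})$; the union of $k$ suitably spaced full levels attains this, and your exchange argument on the multinomial pins down the optimal size sequence exactly as stated. This is a genuinely different and more elementary route than the one the paper relies on: the theorem is imported from \cite{GP}, whose proof goes through the $l$-chain profile polytope (quoted in Section 3 as the lemma on the essential extreme points of $\mu_l(\mathbb{S}_{n,k})$ together with Corollary~\ref{cor:lchain}), and the paper remarks that $La(n,P_{k+1},P_k)$ is then obtained by induction from the $La(n,P_3,P_2)$ base case. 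Your argument buys a short, self-contained proof of the ``Moreover'' sentence.

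The genuine gap is in the first half, the structural claim for arbitrary $l>k$. You correctly reduce it to the assertion that the essential extreme points of the $k$-chain profile polytope of $(l-1)$-Sperner families are realized by unions of $l-1$ full levels --- which is precisely the quoted lemma of \cite{GP}, i.e.\ the entire content of the cited result --- and then you stop, offering only the intention to ``exploit the normalized matching property via a chain/flow decomposition or a compression argument.'' That is a plan, not a proof; the vertex characterization is nontrivial, and your own (accurate) observation that the level-occupancy relaxation overshoots by a constant factor shows why some real work is unavoidable there. Note also that your LYM count does not rescue the general case: for $l>k+1$ a maximal chain can carry up to $\binom{l-1}{k}$ copies of $P_k$, and the resulting bound $c(P_k,\cF)\le\binom{l-1}{k}\max_{\mathbf{b}}M(\mathbf{b})$ is neither tight nor does it force $\cF$ to be a union of levels. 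As written, the proposal proves the second sentence of the theorem but only sketches a strategy for the first.
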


In this paper, we address the first non-chain instances of the general problem. We will consider the following posets (see Figure \ref{fig:defs}): let $\bigvee_r$ denote the poset on $r+1$ elements $0,a_1,a_2,\dots, a_r$ with $0\le a_i$ for all $i=1,2,\dots, r$ and we write $\bigvee$ for $\bigvee_2$. Similarly, let $\bigwedge_r$ denote the poset on $r+1$ elements $a_1,a_2,\dots, a_r,1$ with $a_i\le 1$ for all $i=1,2,\dots, r$ and we write $\bigwedge$ for $\bigwedge_2$. The poset $N$ contains four elements $a,b,c,d$ with $a\le c$ and $b\le c,d$. The butterfly poset $B$ consists of four elements $a,b,c,d$ with $a,b\le c,d$. 
Let the generalized diamond poset $D_k$ be the poset on $k+2$ elements $a,b_1,b_2,\dots,b_k,c$ with $a<b_1,b_2,\dots,b_k<c$. 

\begin{figure}[h]
\centering
\includegraphics{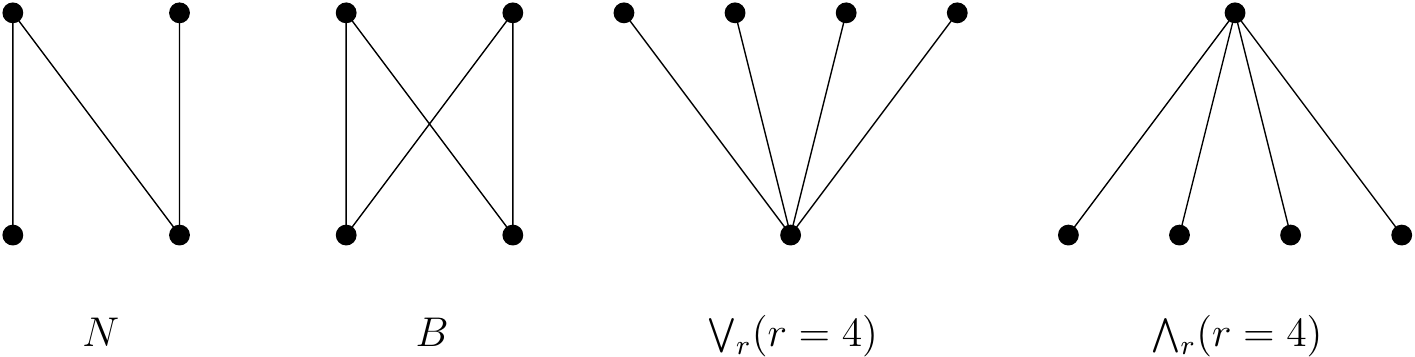}
\caption{The Hasse diagrams of the posets $N$, $B$, $\bigvee_r$ and $\bigwedge_r$ }
\label{fig:defs}
\end{figure}

\vskip 0.3truecm

A family that does not contain $P_2$ is called an antichain. A family $\cF$ that does not contain $P_k$ can be easily partitioned into $k-1$ antichains $\cF_1, \dots, \cF_{k-1}$ the following way: let $\cF_i$ be the set of minimal elements of $\cF\setminus \cup_{j=1}^{i-1} \cF_j$. We call this the \textit{canonical partition} of $\cF$. 

Our first theorem relies on some easy observations.

\begin{theorem}
\label{thm:easy}
\textbf{(a)} $La(n,\bigvee,P_2)=La(n,\bigwedge,P_2)={n\choose \lfloor n/2\rfloor}$.

\textbf{(b)} $La(n,\{\bigvee,\bigwedge\},P_2)={n-1 \choose \lfloor (n-1)/2\rfloor}$.

\textbf{(c)} $La(n,B,D_r)={{n \choose \lfloor n/2\rfloor}\choose r}$.

\textbf{(d)} $La(n,\bigvee, \bigwedge_r)=La(n,\bigwedge,\bigvee_r)={{n \choose \lfloor n/2\rfloor} \choose r}$
\end{theorem}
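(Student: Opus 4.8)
The plan is to pin down each $La$ by a construction --- always a single middle level plus one or two of $\emptyset,[n]$ --- together with a matching upper bound; I treat the four parts in turn. \textbf{(a).} The point is that $\cF$ is $\bigvee$-free precisely when every member of $\cF$ has at most one proper superset in $\cF$. Hence $c(P_2,\cF)$, the number of comparable pairs of $\cF$, equals the number of $F\in\cF$ having a proper superset in $\cF$, and this collection is an antichain: if $A\subsetneq A'$ and $A'$ has a proper superset $B'$, then $A$ has the two proper supersets $A',B'$, a contradiction. By Sperner's theorem it has at most $\binom{n}{\lfloor n/2\rfloor}$ members, and $\binom{[n]}{\lfloor n/2\rfloor}\cup\{[n]\}$ (where $[n]$ is the unique proper superset of every middle set) attains this. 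The $\bigwedge$ statement is the image of this one under complementation $F\mapsto[n]\setminus F$.

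\textbf{(b).} If $\cF$ avoids $\bigvee$ and $\bigwedge$, then no member of $\cF$ is comparable to two others (two proper supersets give a $\bigvee$, two proper subsets a $\bigwedge$, one of each a $3$-chain and hence a $\bigvee$), so the comparability graph of $\cF$ is a matching, say with edges $A_i\subsetneq B_i$ $(1\le i\le t)$, and $c(P_2,\cF)=t$. For $i\ne j$ we must have $A_i\not\subseteq B_j$ (otherwise $A_i$ has the two proper supersets $B_i,B_j$), whereas $A_i\subseteq B_i$; writing $C_i:=[n]\setminus B_i$, this is exactly the hypothesis $A_i\cap C_i=\emptyset$, $A_i\cap C_j\ne\emptyset$ $(i\ne j)$ of Bollob\'as's set-pair inequality, which gives $\sum_{i=1}^t\binom{|A_i|+|C_i|}{|A_i|}^{-1}\le 1$. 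Since $|A_i|+|C_i|=n-(|B_i|-|A_i|)\le n-1$, each summand is at least $\binom{n-1}{\lfloor(n-1)/2\rfloor}^{-1}$, so $t\le\binom{n-1}{\lfloor(n-1)/2\rfloor}$. The family $\{A,\,A\cup\{n\}:A\in\binom{[n-1]}{\lfloor(n-1)/2\rfloor}\}$ is a disjoint union of $\binom{n-1}{\lfloor(n-1)/2\rfloor}$ comparable pairs, avoids both posets, and meets the bound.

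\textbf{(c).} A copy of $D_r$ is an $(r+2)$-element subfamily of $\cF$ with a (unique) least element $A$ and a (unique) greatest element $C$; grouping by $(A,C)$ gives $c(D_r,\cF)=\sum_{A\subsetneq C}\binom{m(A,C)}{r}$, the sum over comparable pairs of $\cF$ with $m(A,C)=|\{F\in\cF:A\subsetneq F\subsetneq C\}|$. Specialising to $r=1$, $c(P_3,\cF)=\sum_{A\subsetneq C}m(A,C)=\sum_{F\in\cF}d^-(F)\,d^+(F)$, where $d^-(F),d^+(F)$ count proper subsets and supersets of $F$ in $\cF$. Now $B$-freeness forces $d^-(F)\,d^+(F)\le 1$ for every $F$: if $A_1\ne A_2$ are proper subsets of $F$ and $C$ a proper superset, then $\{A_1,A_2,F,C\}$ is a butterfly, so $d^-(F)\ge 2$ implies $d^+(F)=0$, and symmetrically. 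Thus $c(P_3,\cF)=|\cM_1|$ with $\cM_1=\{F:d^-(F)=d^+(F)=1\}$, and $\cM_1$ is an antichain: if $F\subsetneq F'$ both lie in $\cM_1$, then $F$ is the only proper subset of $F'$, yet the unique proper subset of $F$ is also a proper subset of $F'$ --- impossible. Hence $c(P_3,\cF)\le\binom{n}{\lfloor n/2\rfloor}$, and since $\sum_i\binom{m_i}{r}\le\binom{\sum_i m_i}{r}$ (a consequence of the Vandermonde identity), $c(D_r,\cF)\le\binom{c(P_3,\cF)}{r}\le\binom{\binom{n}{\lfloor n/2\rfloor}}{r}$. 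Equality holds for $\binom{[n]}{\lfloor n/2\rfloor}\cup\{\emptyset,[n]\}$, whose $D_r$-copies are exactly the $r$-subsets of the middle level augmented by $\emptyset$ and $[n]$.

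\textbf{(d) and the hard part.} For $\bigvee$-free (hence $P_3$-free) $\cF$, let $\cF_1$ be the antichain of minimal elements of $\cF$; every comparable pair of $\cF$ has its bottom in $\cF_1$, and each $A\in\cF_1$ has at most one proper superset in $\cF$. A copy of $\bigwedge_r$ is an $(r+1)$-element subfamily with a greatest element $C$, so $c(\bigwedge_r,\cF)=\sum_{C\in\cF}\binom{d^-(C)}{r}$, where only $C\notin\cF_1$ contribute and for those $d^-(C)=|\{A\in\cF_1:A\subsetneq C\}|$; hence $\sum_C d^-(C)=\sum_{A\in\cF_1}d^+(A)\le|\cF_1|\le\binom{n}{\lfloor n/2\rfloor}$, and super-additivity again yields $c(\bigwedge_r,\cF)\le\binom{\binom{n}{\lfloor n/2\rfloor}}{r}$, attained by $\binom{[n]}{\lfloor n/2\rfloor}\cup\{[n]\}$; the $(\bigwedge,\bigvee_r)$ version follows by complementation. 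I expect the only non-routine step to be part (c): recognising that the $D_r$-count collapses, through super-additivity of $\binom{\cdot}{r}$, to the $3$-chain count, and then extracting from $B$-freeness both $d^-(F)d^+(F)\le 1$ and the fact that equality occurs only on an antichain. (Spotting the Bollob\'as set-pair configuration in part (b) is the one small trick there.)
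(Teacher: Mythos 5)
Your proof is correct in all four parts. Parts (a), (c) and (d) are essentially the paper's arguments in different packaging: the paper phrases (a) and (d) via the canonical partition $\cF_1\cup\cF_2$ of a $P_3$-free family (every copy of $P_2$, resp.\ every $r$-tuple of bottom sets of a $\bigwedge_r$, is determined by its part in the antichain $\cF_1$), and in (c) it introduces the antichain $\cM$ of sets having both a proper subset and a proper superset in $\cF$ and notes that the $r$ middle sets of a $D_r$-copy lie in $\cM$ and determine the other two; your $\cM_1$ is that same family, and your superadditivity step $\sum_i\binom{m_i}{r}\le\binom{\sum_i m_i}{r}$ is just another way of saying that the $r$ middle elements determine the top and bottom. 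The genuine difference is in (b): after the common observation that the comparability graph is a matching $A_i\subsetneq B_i$, the paper simply halves the Katona--Tarj\'an bound $La(n,\{\bigvee,\bigwedge\},P_1)=2\binom{n-1}{\lfloor(n-1)/2\rfloor}$, whereas you bound the number of edges directly by recognizing $(A_i,[n]\setminus B_i)$ as a Bollob\'as cross-intersecting set-pair system and using $|A_i|+|[n]\setminus B_i|\le n-1$. Your route is self-contained (it needs only Bollob\'as's set-pair inequality rather than the Katona--Tarj\'an theorem, and in fact reproves half of that theorem in passing) at the cost of a slightly longer argument; the paper's is a one-line reduction to a known extremal result. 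Both yield the same extremal construction, so there is nothing to fix.
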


\vskip 0.2truecm
The proof of our next theorem uses the notion of profile vectors (ordinary and $l$-chain profile vectors). Here and throughout the paper $h(x)$ denotes the binary entropy function, i.e. $h(x)=-x\log_2x-(1-x)\log_2(1-x)$.

\begin{theorem}
\label{thm:profile}
\textbf{(a)} $La(n,P_3,\bigwedge_r)=La(n,P_3,\bigvee_r)=\binom{n}{i_r}\binom{\binom{i_r}{\lfloor i_r/2\rfloor}}{r}$ for some $i_r$ with $i_r=(1+o(1))\frac{2^r}{2^r+1}n$.

\textbf{(b)} $La(n,P_4,D_r)=\binom{n}{j_r}\binom{j_r}{i_r}\binom{\binom{j_r-i_r}{\lfloor (j_r-i_r)/2\rfloor}}{r}$ for some $i_r=(1+o(1))\frac{n}{2^r+2}$ and either $j_r=n-i_r$ or $j_r=n-i_r-1$.

\textbf{(c)} $2^{(c+o(1))n} \le La(n,P_3,N) \le o(2^{3n})$,

where $c=h(c_0)+3c_0h(c_0/(1-c_0))=2.9502...$ with $c_0$ being the real root of the equation $0=7x^3-10x^2+5x-1$.
\end{theorem}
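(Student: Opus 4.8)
My plan is to treat (a) and (b) by the same two–step scheme — first reduce the count of copies of the target poset to a weighted count over ``extremal'' sets, then bound that weighted count by the profile polytope (LYM) method and match it with a construction built from full levels — and to treat (c) by an explicit level construction for the lower bound together with a careful first–moment estimate for the upper bound. For (a), start from the canonical partition $\cF=\cF_1\cup\cF_2$ of a $P_3$-free family into two antichains. The key structural observation is that whenever $A\subsetneq B$ with $A,B\in\cF$, the set $A$ is minimal and $B$ is maximal in $\cF$ (a third comparable set would produce a $P_3$), so in any copy of $\bigwedge_r$ the $r$ lower sets lie in $\cF_1$ and are pairwise incomparable, while the top set lies in $\cF_2$. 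Hence $c(\bigwedge_r,\cF)=\sum_{B\in\cF_2}\binom{d(B)}{r}$, where $d(B)$ is the number of members of $\cF_1$ contained in $B$. Those members form an antichain in $2^B$, so $d(B)\le\binom{|B|}{\lfloor|B|/2\rfloor}$ by Sperner's theorem, and then LYM for the antichain $\cF_2$ bounds $\sum_{B\in\cF_2}w(|B|)$ by $\max_j\binom nj w(j)$ for any nonnegative weight $w$; with $w(j)=\binom{\binom j{\lfloor j/2\rfloor}}{r}$ this is exactly the claimed upper bound, and the family $\binom{[n]}{\lfloor i_r/2\rfloor}\cup\binom{[n]}{i_r}$ attains it. The location of $i_r$ follows from maximizing $h(x)+rx$ over $x=j/n\in[0,1]$, which gives $x=2^r/(2^r+1)$, and the equality $La(n,P_3,\bigvee_r)=La(n,P_3,\bigwedge_r)$ is just complementation.

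\textbf{Part (b).} This is the same argument one level deeper. In a $P_4$-free family with canonical partition $\cF_1\cup\cF_2\cup\cF_3$, a chain $A\subsetneq B\subsetneq C$ forces $A\in\cF_1,\ B\in\cF_2,\ C\in\cF_3$, so every copy of $D_r$ is given by a pair $(A,C)\in\cF_1\times\cF_3$ with $A\subsetneq C$ together with $r$ pairwise incomparable ``middle'' sets $B$ with $A\subsetneq B\subsetneq C$; since the $B\setminus A$ form an antichain in $2^{C\setminus A}$, there are at most $\binom{|C|-|A|}{\lfloor(|C|-|A|)/2\rfloor}$ candidate middles. The new tool is the $2$-chain profile polytope: double-counting maximal chains (each meets $\cF_1$ at most once and $\cF_3$ at most once) shows that, writing $x_{a,b}$ for the number of such pairs of sizes $a<b$, one has $\sum_{a<b}x_{a,b}/\big(\binom nb\binom ba\big)\le 1$, and therefore $\sum_{a<b}x_{a,b}\,w(b-a)\le\max_{a<b}\binom nb\binom ba\,w(b-a)$ for any nonnegative $w$. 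With $w(t)=\binom{\binom t{\lfloor t/2\rfloor}}{r}$ this is the stated bound, attained by $\binom{[n]}{i_r}\cup\binom{[n]}{m_r}\cup\binom{[n]}{j_r}$ with $m_r$ the level midway between $i_r$ and $j_r$. Writing $i=\alpha n$, $j-i=2\mu n$, $n-j=\gamma n$ and optimizing the resulting entropy expression, the Lagrange conditions give $\alpha=\gamma$ and $\mu=2^{r-1}\alpha$, hence $\alpha=1/(2^r+2)$ and $j_r\in\{n-i_r,n-i_r-1\}$ after taking care of parity.

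\textbf{Part (c).} For the lower bound I would use $\cF=\binom{[n]}{a}\cup\binom{[n]}{b}$ with $a<b$, which is $P_3$-free; counting a copy of $N$ by first choosing the shared lower set $B$, then the two upper sets containing it, then the remaining lower set inside the first upper set, gives $c(N,\cF)=(1+o(1))\binom na\binom{n-a}{b-a}^2\binom ba$. Because $N$ is self-dual and complementation sends the pair of levels $(a,b)$ to $(n-b,n-a)$, the optimum satisfies $a+b=n$, which collapses the optimization to one variable $c_0=b/n$; setting the derivative of the resulting exponent to zero produces the cubic $7x^3-10x^2+5x-1=0$ and the value $c$ in the statement. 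For the upper bound, again with $\cF=\cF_1\cup\cF_2$, I would discard the fourth set of a copy of $N$ by bounding it by $x(C):=|\{A\in\cF_1:A\subsetneq C\}|$, giving $c(N,\cF)\le\sum_{B\in\cF_1}\deg(B)\sum_{C\in\cF_2,\,C\supsetneq B}x(C)$, where $\deg(B):=|\{C\in\cF_2:B\subsetneq C\}|$. Then $\deg(B)\le\binom{n-|B|}{\lfloor(n-|B|)/2\rfloor}$ since the $C\setminus B$ form an antichain, and $\sum_{C\supsetneq B}x(C)\le 2^{|B|}\max_s\binom{n-|B|}{s}2^s$ by LYM on the same antichain combined with $x(C)\le 2^{|C|}$. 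Each maximum has order $2^m/\sqrt m$, respectively $3^m/\sqrt m$, with $m=n-|B|$, so a set $B$ of size $j$ contributes $O\!\big(2^j6^{n-j}/(n-j)\big)$; summing over the antichain $\cF_1$ and using $\sum_j\binom nj 2^j6^{n-j}=8^n$ together with the fact that this sum is concentrated near $j\approx n/4$ (where $n-j=\Theta(n)$) yields $c(N,\cF)=O(8^n/n)=o(2^{3n})$.

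\textbf{Main obstacle.} Parts (a) and (b) should be routine once the structural reductions are written down: everything reduces to a standard LYM/profile-polytope estimate plus an entropy optimization. The real work is in (c). On the lower-bound side one must actually perform the optimization of $\binom na\binom{n-a}{b-a}^2\binom ba$ (or cleanly justify the reduction $a+b=n$ via self-duality of $N$) carefully enough to extract the cubic. On the upper-bound side the delicate point is that the crude estimates only give $O(2^{3n})$; to reach the strict bound $o(2^{3n})$ one has to retain the polynomial savings — both the $1/\sqrt m$ factors coming from Sperner's theorem and the extra $\Theta(1/n)$ coming from the location of the peak of $\sum_j\binom nj 2^j6^{n-j}$. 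That accounting is what I expect to be the main obstacle.
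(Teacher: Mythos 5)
Your proposal is correct, and for parts (a), (b) and the lower bound of (c) it is essentially the paper's own argument: canonical partition of the $P_3$-free (resp. $P_4$-free) family, Sperner's theorem to bound the ``free'' antichain of bottom/middle sets, a one- or two-chain LYM / profile-polytope step to reduce to unions of full levels, an entropy or ratio optimization to locate $i_r$ (and $j_r$), and the two-level family $\binom{[n]}{a}\cup\binom{[n]}{b}$ for the lower bound in (c). (The paper justifies $a+b=n$ by a ratio computation rather than your self-duality heuristic; symmetry of $g$ alone would not locate the maximum, but since you only need a lower bound, restricting to $a+b=n$ requires no justification at all.) The one place you genuinely diverge is the upper bound in (c): the paper counts copies of $N$ by the $2$-chain $(\phi(b),\phi(c))$, bounds the choices of $\phi(a)$ and $\phi(d)$ by Sperner, and applies the $2$-chain profile polytope to get $\max_{i<j}\binom{n}{j}\binom{j}{i}\binom{n-i}{\lfloor (n-i)/2\rfloor}\binom{j}{\lfloor j/2\rfloor}=o(2^{3n})$, whereas you use two nested single-LYM applications with the cruder bound $x(C)\le 2^{|C|}$ and the identity $\sum_j\binom{n}{j}2^j6^{n-j}=8^n$. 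Your variant is slightly more elementary (no $2$-chain machinery needed for this part) and still reaches $o(2^{3n})$, because you correctly retain the $1/\sqrt{n-j}$ savings from the two Sperner-type maxima and observe that the peak of $\binom{n}{j}2^j6^{n-j}$ sits at $j\approx n/4$, where $n-j=\Theta(n)$; the paper's version keeps the sharper antichain bound on $x(C)$ but is otherwise the same first-moment computation. Both routes work, and I see no gap.
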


Let us return for a moment to the original forbidden subposet problems. The main conjecture of the area was first published by Griggs and Lu in \cite{GL}. 

\begin{conjecture}
\label{conj:main}
For a poset $P$ let us denote by $e(P)$ the largest integer $m$ such that for any $n$, any family $\cF\subseteq 2^{[n]}$ consisting of $m$ \textit{consecutive} levels is $P$-free. Then \[\lim_n\frac{La(n,P,P_1)}{\binom{n}{\lceil n/2\rceil}}=e(P)
\]holds.
\end{conjecture}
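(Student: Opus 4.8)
\noindent\textbf{Proof proposal for \cjref{main}.}
The lower bound $\lim_n La(n,P,P_1)/\binom{n}{\lceil n/2\rceil}\ge e(P)$ needs nothing: by the definition of $e(P)$, the union $\cG$ of the $e(P)$ levels $\binom{[n]}{k}$ with $k$ closest to $n/2$ is $P$-free, and since $\binom{n}{\lceil n/2\rceil+i}=(1-o(1))\binom{n}{\lceil n/2\rceil}$ for every fixed $i$, we get $|\cG|=(e(P)+o(1))\binom{n}{\lceil n/2\rceil}$. So the whole content is the matching upper bound: starting from an arbitrary $P$-free family $\cF\subseteq 2^{[n]}$, one must show $|\cF|\le (e(P)+o(1))\binom{n}{\lceil n/2\rceil}$.

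The first attack I would make is the Lubell-function (random maximal chain) method that drives results such as \tref{sperner}. For $\cF\subseteq 2^{[n]}$ put $\lambda(\cF)=\sum_{F\in\cF}\binom{n}{|F|}^{-1}$, the expected number of members of $\cF$ lying on a uniformly random maximal chain $\emptyset=C_0\subset C_1\subset\dots\subset C_n=[n]$. Since $\binom{n}{k}\le\binom{n}{\lceil n/2\rceil}$ for all $k$, we have $|\cF|\le \binom{n}{\lceil n/2\rceil}\lambda(\cF)$, so it would suffice to prove $\lambda(\cF)\le e(P)+o(1)$ for every $P$-free $\cF$. The key steps would be: (1) reduce to this Lubell estimate; (2) group the ranks $0,1,\dots,n$ into short consecutive blocks whose length is dictated by the shape of $P$; (3) show \emph{locally} that if $\cF$ places, on average, more than $e(P)$ sets into a block, then a copy of $P$ is forced; (4) sum the local bounds and absorb the $o(1)$ error coming from the boundary blocks and from the $\binom{n}{k}$-versus-$\binom{n}{\lceil n/2\rceil}$ slack. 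This programme succeeds for chains $P_{k+1}$, for tree-like posets via the block/partition argument of Bukh, and for the small posets treated in this paper it either runs directly or can be read off from \tref{gp}, \tref{easy} and \tref{profile}.

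The reason the conjecture is still open — and the step I expect to be the genuine obstacle — is step (3): for some posets the Lubell mass of a $P$-free family is strictly larger than $e(P)$. The diamond $D_2$ is the classical example: $e(D_2)=2$, yet the best constant the Lubell method can deliver for $D_2$-free families is bounded away from $2$ (it is around $2.25$) and that is essentially optimal for the method, so the clean reduction $|\cF|\le\binom{n}{\lceil n/2\rceil}\lambda(\cF)$ can never reach the conjectured value $2$. For such posets one has to charge the configurations responsible for the extra mass directly against the forbidden copy of $P$, which forces one to count something finer than single sets: incomparable pairs or triples, a convexity/entropy estimate of the type used in \tref{profile}(c), or a full profile-polytope optimisation. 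No single technique is known that handles every $P$, and producing one — or, failing that, a structural dichotomy separating the ``Lubell-tight'' posets, for which steps (1)--(4) suffice, from the rest — is exactly what a proof of \cjref{main} would require.
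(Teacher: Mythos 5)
The statement you were asked about is \cjref{main}, the Griggs--Lu conjecture, and the paper contains no proof of it: it is quoted from \cite{GL} precisely as an open problem, and the surrounding text even points out that it is unresolved already for the smallest candidate $D_2$. So there is no ``paper's own proof'' to compare against, and your submission, to its credit, does not pretend to supply one. Your lower bound is correct and complete: by the definition of $e(P)$ the union of the $e(P)$ middle levels is $P$-free, and its size is $(e(P)+o(1))\binom{n}{\lceil n/2\rceil}$ since each of the finitely many levels involved has size $(1-o(1))\binom{n}{\lceil n/2\rceil}$. Your account of the upper bound is an accurate survey of the standard Lubell-function programme and of exactly where it breaks: the reduction $|\cF|\le\binom{n}{\lceil n/2\rceil}\lambda(\cF)$ cannot reach $e(D_2)=2$ because $D_2$-free families can have Lubell mass strictly above $2$, so step (3) of your outline genuinely fails for some posets and no replacement is known.

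The gap, then, is the entire upper bound $La(n,P,P_1)\le(e(P)+o(1))\binom{n}{\lceil n/2\rceil}$ for general $P$; steps (2)--(4) of your programme are a heuristic, not an argument, and you correctly identify that they cannot be completed with current techniques. One small caution about your closing remarks: the results of this paper (\tref{gp}, \tref{easy}, \tref{profile}) concern the generalized quantity $La(n,P,Q)$ for $Q\ne P_1$ and do not feed back into \cjref{main}; indeed the paper's point in citing the conjecture is to contrast it with the generalized setting, where the analogous ``consecutive middle levels'' heuristic fails outright (e.g.\ for $La(n,D_2,P_3)$ every union of full levels gives zero copies). So nothing in this paper can be ``read off'' to make progress on \cjref{main} itself.
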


In words, \cjref{main} states that to obtain an asymptotically largest $P$-free family $\cF\subseteq 2^{[n]}$ one has to consider as many middle levels of $2^{[n]}$ as possible without creating a copy of $P$. Again, we refer the interested Reader to the recent survey \cite{GLi} to see for which families of posets \cjref{main} has been verified.

\vskip 0.2truecm

However, already \tref{gp} shows that to make \cjref{main} valid in the more general context one has to remove at least the word consecutive. All parts of both \tref{easy} and \tref{profile} suggest that  a general conjecture stating that for any pair $P,Q$ of posets $La(n,P,Q)$ is asymptotically attained at a sequence of families consisting of full levels of $2^{[n]}$. But this is not the case at all!  There are pairs of posets for which all families consisting of full levels are very far from being optimal. Let us consider $La(n,D_2,P_3)$ the maximum number of 3-chains in diamond-free families. Every family that contains at least three full levels of $2^{[n]}$ contains a copy of $D_2$, while a family that is the union of at most two levels, does not contain any copy of $P_3$. Therefore, if $\cF$ is $D_2$-free and is the union of full levels, then $c(P_3,\cF)=0$, while there are $D_2$-free families with lots of copies of $P_3$. Note that $D_2$ is the smallest poset for which \cjref{main} has not been proved.
 
 \begin{theorem}\label{thm:diamond}
 For the generalized diamond posets  and integers $k>l$ the following holds:
 $$\binom{k-1}{l}La(n-k+1,P_3,P_2)\le La(n,D_k,D_l)\le \left(\binom{k+1}{2}-k\right)\binom{k-1}{l}La(n,P_3,P_2).$$
 \end{theorem}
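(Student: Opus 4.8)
The plan is to treat a $D_k$-free family as a $(k+1)$-Sperner family and reduce both inequalities to the $2$-Sperner quantity $La(\cdot,P_3,P_2)$, which is under control by \tref{gp}. Throughout I assume $1\le l<k$. For the lower bound I would set $m=n-k+1$, write $[n]=[m]\sqcup X$ with $|X|=k-1$, fix a saturated chain $\emptyset=Y_0\subsetneq Y_1\subsetneq\dots\subsetneq Y_{k-1}=X$ inside $2^X$, and, by \tref{gp}, choose a $P_3$-free family $\cF_0\subseteq 2^{[m]}$ with $c(P_2,\cF_0)=La(m,P_3,P_2)$ that is a union of two full levels; let $\cF_0=\cA_1\cup\cA_2$ be its canonical partition, so that every comparable pair of $\cF_0$ runs from $\cA_1$ to $\cA_2$. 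Then put
\[
\cF\ =\ \cA_1\ \cup\ \{\,F\cup Y_j:\ F\in\cA_2,\ 0\le j\le k-1\,\}\ \subseteq\ 2^{[n]}.
\]

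Two things then need checking. First, $\cF$ is $D_k$-free: the map $S\mapsto S\cap[m]$ is inclusion-preserving with image in $\cF_0$, so in any copy of $D_k$ the $k+2$ \emph{bases} form a chain in the $P_3$-free family $\cF_0$ and hence take at most two values; but at most one set of $\cF$ sits over a given base in $\cA_1$ and at most $k$ sets sit over a given base in $\cA_2$, for a total of at most $k+1<k+2$ sets --- a contradiction. Second, $\cF$ carries many copies of $D_l$: for each comparable pair $F\subsetneq F'$ of $\cF_0$ (so $F\in\cA_1$, $F'\in\cA_2$) the sets of $\cF$ lying strictly between $F$ and $F'\cup X$ are exactly $F'\cup Y_0,\dots,F'\cup Y_{k-2}$, i.e.\ $k-1$ of them, so this pair is the (minimum, maximum) of $\binom{k-1}{l}$ distinct copies of $D_l$; since the minimum and maximum of a copy recover the pair, distinct pairs yield distinct copies. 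Hence $c(D_l,\cF)\ge\binom{k-1}{l}La(n-k+1,P_3,P_2)$.

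For the upper bound let $\cF\subseteq 2^{[n]}$ be $D_k$-free. A chain of $k+2$ sets is a copy of $D_k$, so $\cF$ is $P_{k+2}$-free and its canonical partition reads $\cF=\cG_1\cup\dots\cup\cG_{k+1}$ with each $\cG_i$ an antichain. For any comparable pair $A\subsetneq C$ in $\cF$, the number $b(A,C)$ of sets of $\cF$ strictly between them is at most $k-1$ (else $A$, $C$ and any $k$ of them form a copy of $D_k$); and since in a copy of $D_l$ the minimum and maximum are uniquely determined, $c(D_l,\cF)=\sum_{A\subsetneq C}\binom{b(A,C)}{l}$. As $l\ge1$, only pairs with $b(A,C)\ge1$ contribute, and for such a pair, if $A\in\cG_i$ and $C\in\cG_j$, the level $h$ of any set strictly between satisfies $i<h<j$, so $j\ge i+2$. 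Therefore
\[
c(D_l,\cF)\ \le\ \binom{k-1}{l}\sum_{\substack{1\le i<j\le k+1\\ j-i\ge 2}}\bigl|\{(A,C):A\in\cG_i,\ C\in\cG_j,\ A\subsetneq C\}\bigr|.
\]
For each fixed pair $(i,j)$ the family $\cG_i\cup\cG_j$ is a union of two antichains, hence $P_3$-free, so it has at most $La(n,P_3,P_2)$ comparable pairs; and the number of index pairs $1\le i<j\le k+1$ with $j-i\ge2$ is $\binom{k+1}{2}-k$. Combining these gives $c(D_l,\cF)\le\bigl(\binom{k+1}{2}-k\bigr)\binom{k-1}{l}La(n,P_3,P_2)$, which is the claimed upper bound.

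I expect the main obstacle to be the lower-bound construction rather than the upper bound. The naive attempt --- placing \emph{all} subsets of an auxiliary $(k-1)$-set on top of $\cA_2$ --- already fails for $k\ge 4$, since between a suitable bottom and top one then finds $2^{k-1}-2$ intermediate sets, far more than the $k$ that $D_k$ forbids. The fix is to use only a single saturated chain $Y_0\subsetneq\dots\subsetneq Y_{k-1}$ over each element of $\cA_2$ (and to put nothing new over $\cA_1$), which leaves exactly $k-1$ sets below its top --- one short of what $D_k$ needs --- while still supporting $\binom{k-1}{l}$ copies of $D_l$; getting this count exactly right, and running the base/antichain argument with respect to the smaller ground set $[m]$ so that all bases lie in the $P_3$-free $\cF_0$, is where the care is required. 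The upper bound, by contrast, is essentially forced once one observes that $D_k$-free means $(k+1)$-Sperner and that the union of two antichains is $P_3$-free; the only arithmetic left is the identity $\binom{k+1}{2}-k=\binom{k}{2}$ counting non-adjacent pairs of levels.
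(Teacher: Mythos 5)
Your proof is correct and follows essentially the same route as the paper: the upper bound via the canonical partition of a $D_k$-free (hence $P_{k+2}$-free) family into $k+1$ antichains, with $\binom{k+1}{2}-k$ admissible index pairs, at most $La(n,P_3,P_2)$ comparable pairs per index pair, and at most $\binom{k-1}{l}$ choices of middles; and the lower bound via the same construction that stacks a $(k-1)$-chain of new elements over each set of the top antichain of an optimal $P_3$-free family on $n-k+1$ elements. One small phrasing point: the $k+2$ bases do not form a chain merely because $S\mapsto S\cap[m]$ is inclusion-preserving (the middles of $D_k$ are incomparable, so their images need not be), but $P_3$-freeness of $\cF_0$ applied to each triple $\phi(a)\cap[m]\subseteq\phi(b_i)\cap[m]\subseteq\phi(c)\cap[m]$ forces every middle base to equal the base of $\phi(a)$ or of $\phi(c)$, which yields the two-value conclusion your count actually uses.
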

 
 Note that \tref{diamond} implies $La(n,D_k,D_l)=\theta_{k,l}(La(n,P_3,P_2))$ for any fixed $k$ and $l$ and the exact value of $La(n,P_3,P_2)$ is given by \tref{gp}. So it is a natural question whether the limit $d_{k,l}=\lim_{\infty}\frac{La(n,D_k,D_l)}{La(n,P_3,P_2)}$ exists and if so, what its value is. In the simplest case $k=2,l=1$ the above inequalities and \tref{gp} imply $1/3\le d_{2,1}\le 1$.
 \vskip 0.2truecm
 So what can be saved from \cjref{main} in the more general context? Let $l(P)$ be the height of a poset $P$, i.e. the length of the longest chain in $P$. Clearly, if $\cF$ is the union of any $l(P)-1$ full levels, it must be $P$-free. On the other hand if  $\cF_n=\cup_{j=1}^{l(Q)}\binom{[n]}{i_j}$ is the union of $l(Q)$ full levels with $i_{j+1}-i_j\ge cn$ for some constant $c$ for all $j=1,2,\dots l(Q)-1$, then $\cF_n$ contains many copies of $Q.$
 Therefore we propose the following.

\begin{conjecture}\label{conj:genconj}
For any pair $P,Q$ of posets with $l(P)>l(Q)$ there exist a sequence of $P$-free families $\cF_n\subseteq 2^{[n]}$ all of which are unions of full levels such that
\[
La(n,P,Q)=(1+o(1))c(Q,\cF_n)
\]
holds.
\end{conjecture}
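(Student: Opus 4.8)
The statement is a conjecture, so I will outline the strategy I would pursue and flag precisely where it breaks down; this is what keeps it open. The plan splits into the easy lower bound, supplied by an explicit union-of-levels construction, and the hard upper bound, asserting that no $P$-free family beats that construction asymptotically.

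\textbf{Lower bound (the construction).} The hypothesis $l(P)>l(Q)$ is used exactly here. A union of $m$ full levels contains no chain of length exceeding $m$, since a chain meets each level in at most one set; hence it is $P$-free as soon as $m\le l(P)-1$, because a copy of $P$ forces a chain of length $l(P)$. As $l(Q)\le l(P)-1$, the family $\cF_n=\bigcup_{j=1}^{l(Q)}\binom{[n]}{i_j}$ is automatically $P$-free for \emph{any} choice of positions $i_1<\dots<i_{l(Q)}$, and we pick these positions (allowing, if $Q$ is wide, up to $l(P)-1$ levels) so as to maximize $c(Q,\cF_n)$. For widely spread levels this count factorizes into a product of binomials: one places the images of a maximal chain of $Q$ level by level, then inserts the remaining elements of $Q$ into the appropriate level as an antichain, maximized at its middle slice. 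This is exactly the form of the closed formulas in \tref{profile}(a),(b), and it defines the candidate value that the upper bound must match.

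\textbf{Upper bound via profiles.} Here I would adapt the profile polytope method already used for \tref{profile}. To each family $\cF$ one attaches a profile vector recording suitable level and chain statistics, expresses or upper-estimates $c(Q,\cF)$ as a function $f$ of this vector, and argues that $f$ is maximized over the (convex, after normalization) set of achievable profiles at a vertex corresponding to a union of full levels, thereby matching the construction. The enabling observation is that every copy of $Q$ restricts to a copy of its longest chain $P_{l(Q)}$, to which the remaining elements of $Q$ attach in prescribed comparability relations; thus $c(Q,\cF)$ is governed by the $l(Q)$-chain profile of $\cF$ refined by local antichain sizes, which is precisely the data the polytope method tracks and the reason the formulas in \tref{profile} are products of binomials.

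\textbf{Where it gets stuck.} Two obstacles separate this plan from a proof. First, $c(Q,\cF)$ is not a linear functional of any natural profile vector but a product of binomial-type terms, so the vertex-maximization principle cannot be quoted directly; one must show the relevant estimate is Schur-convex (or otherwise extremal at the vertices) and control the lower-order contribution of copies of $Q$ that do not use $l(Q)$ distinct levels. Second, and more seriously, for a poset $P$ that is not a chain the $P$-free condition is not captured by level densities or chain inequalities: a $P$-free family can contain arbitrarily long chains (a single maximal chain is $\bigvee$-free), so it need not be $P_{l(P)}$-free and need not even admit the canonical partition into $l(P)-1$ antichains, and the $P$-free profile polytope has no clean description. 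I would therefore attack the conjecture first for chain posets $P=P_k$, where the achievable profiles form the known $(k-1)$-Sperner (BLYM-type) polytope and $f$ is a product of binomials amenable to convexity analysis; extending to general $P$ by bounding $c(Q,\cF)$ through its longest-chain substructure is the step I expect to be genuinely hard.
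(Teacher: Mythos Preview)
The statement is a \emph{conjecture}; the paper does not prove it, so there is no proof to compare against. Your recognition of this, and your separation into a routine lower bound and a genuinely hard upper bound, is correct and matches the paper's own framing.

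Your strategic outline is in fact the route the paper takes for its partial results, with one refinement worth knowing. To address your first obstacle (that $c(Q,\cF)$ is not a linear functional of the ordinary or $l$-chain profile), the paper introduces \emph{$r$-intersection} and \emph{$r$-intersection-union profile vectors} of an antichain $\cA$: these record, for each $i$ (resp.\ each pair $i<j$), the number of $r$-tuples in $\cA$ with intersection of size $i$ (resp.\ intersection of size $i$ and union of size $j$). With this enlarged profile, $c(Q,\cF)$ for $Q$ a complete multi-level poset \emph{is} bounded above by a linear functional, and the work shifts to bounding the individual coordinates of these vectors over all antichains. The paper achieves this only up to polynomial factors (Theorems~4.1 and~4.2), yielding the ``almost holds'' results of Corollary~1.11; getting rid of the polynomial factor is explicitly left open.

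Your second obstacle is accurately identified and is not touched by the paper: every partial result has $P=P_k$ a chain, precisely so that the canonical partition into $k-1$ antichains is available. One small correction to your first obstacle: once $P=P_{l(Q)+1}$ and the canonical partition is in hand, every copy of $Q$ is \emph{forced} to use exactly one set from each of the $l(Q)$ antichains, so there is no lower-order contribution from copies spanning fewer levels to control.
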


As we have already seen, this conjecture often holds even if $l(P)\le l(Q)$. We say that for a pair $P,Q$ of posets \cjref{genconj} \textit{strongly holds} if for large enough $n$ we have $La(n,P,Q)=c(Q,\cF_n)$ and \textit{almost holds} if   $La(n,P,Q)=O( n^kc(Q,\cF_n))$ for some $k$ that depends only on $P$ and $Q$. In both cases we also assume the family $\cF_n$ is $P$-free and is the union of full levels, but we do not assume anything about $l(P)$ and $l(Q)$. Parts \textbf{(a)} \textbf{(c)}, and \textbf{(d)} of \tref{easy} show that \cjref{genconj} strongly holds for those pairs of posets. 
\vskip 0.2truecm

In \tref{easy} and \tref{profile} we dealt with $La(n,P_{l(Q)+1},Q)$ for different posets $Q$. (In the case of $La(n,B,D_r)$ it is implicit, as the $B$-free property implies $P_4$-free property.) We knew the place of every element of  every copy of $Q$ in the canonical partition. In the following we deal with these kind of problems. We introduce the following binary operations of posets: for any pair $Q_1$, $Q_2$ of posets  we define $Q_1\otimes_r Q_2$ by adding an antichain of size $r$ between $Q_1$ and $Q_2$. More precisely, let us assume $Q_1$ consists of $q^1_1,\dots, q^1_a$ and $Q_2$ consists of $q^2_1,\dots,q^2_b$. Then $R=Q_1\otimes_r Q_2$ consists of $q^1_1,\dots, q^1_a,m_1,m_2,\dots, m_r,q^2_1,\dots q^2_b$. We have $q^1_i<_R q^1_j$ if and only if $q^1_i<_{Q_1} q^1_j$ and similarly $q^2_i<_R q^2_j$ if and only if $q^2_i<_{Q_2} q^2_j$. Also we have $q^1_i<_R m_k<_R q^2_j$ for every $i$,  $k$, and $j$. Finally, the $m_k$'s form an antichain. Note that $l(Q_1\otimes_r Q_2)=l(Q_1)+l(Q_2)+1$.
Let $Q\oplus r$ denote the poset $Q\otimes_r \textbf{0}$, where $\textbf{0}$ is the empty poset, i.e. $Q\oplus r$ is obtained from $Q$ by adding $r$ elements that form an antichain and that are all larger than all elements of $Q$. Similar operations of posets were considered first in the area of forbidden subposet problems by Burcsi and Nagy \cite{BN}.

We will obtain bounds on $La(n,P_{l(Q_1\otimes_r Q_2)+1},Q_1\otimes_rQ_2)$ involving bounds on $La(n,P_{l(Q_1)+1},Q_1)$ and $La(n,P_{l(Q_2)+1},Q_2)$. For this we will need the following auxiliary statement that can be of independent interest.

\begin{theorem}\label{thm:rtuples}
\textbf{(a)} For every $r\ge 3$ and antichain $\cA\subseteq 2^{[n]}$ the number $\gamma^r_{0,n}(\cA)$ of $r$-tuples $A_1,A_2\dots, A_r$ with $|\bigcap_{i=1}^rA_i|=0$ and $\bigcup_{i=1}^rA_i=[n]$ is at most $n^{2r}\gamma^r_{0,n}(\binom{[n]}{\lfloor n/2\rfloor})$. If $r=2$ and $n$ is even, then $\gamma^2_{0,n}(\cA)\le \gamma_{0,n}^2(\binom{[n]}{n/2})$, while if $r=2$ and $n$ is odd, then $\gamma^2_{0,n}(\cA)\le \binom{n-1}{\lfloor n/2\rfloor -1}$.

\textbf{(b)} For every $r$ there exists a sequence $l_n$ such that if $\cA\subseteq 2^{[n]}$ is an antichain, then the number $\beta^r_0(\cA)$ of $r$-tuples $A_1,A_2\dots, A_r$ with $|\bigcap_{i=1}^rA_i|=0$ is at most $n^{2r+1}\beta^r_0(\binom{[n]}{l_n})$.

\textbf{(c)} If $\cA\subseteq 2^{[n]}$ is an antichain, then $\beta^2_0(\cA)\le\frac{1}{2}\binom{n}{\lfloor n/3\rfloor}\binom{\lceil 2n/3\rceil}{\lfloor n/3\rfloor}$ and this is sharp as shown by $\binom{[n]}{\lfloor n/3\rfloor}$ if $n\equiv 0,1$ mod 3 and by $\binom{[n]}{\lceil n/3\rceil}$ if $n\equiv 2$ mod 3.
\end{theorem}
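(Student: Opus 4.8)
The plan is to treat the four assertions separately, proving the two ``large $r$'' parts by a monotonicity reduction plus a concentration estimate, and the exact statements (part~(c) and part~(a) for $r=2$) by genuinely using that $\cA$ is an antichain.

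For part~(a) with $r\ge3$ and for part~(b) the first observation is monotonicity: since $\cA\subseteq 2^{[n]}$ we have $\gamma^r_{0,n}(\cA)\le\gamma^r_{0,n}(2^{[n]})$ and $\beta^r_0(\cA)\le\beta^r_0(2^{[n]})$. Recording for each $x\in[n]$ the \emph{pattern} $S_x=\{i:x\in A_i\}$ identifies the $r$-tuples with $\bigcap_iA_i=\emptyset$, $\bigcup_iA_i=[n]$ with the functions $[n]\to 2^{[r]}\setminus\{\emptyset,[r]\}$, so $\gamma^r_{0,n}(2^{[n]})=(2^r-2)^n$; dropping the union condition gives $\beta^r_0(2^{[n]})=(2^r-1)^n$. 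Thus it suffices to prove the matching lower bounds $\gamma^r_{0,n}\big(\binom{[n]}{\lfloor n/2\rfloor}\big)\ge n^{-2r}(2^r-2)^n$ and $\beta^r_0\big(\binom{[n]}{l_n}\big)\ge n^{-2r-1}(2^r-1)^n$ for a suitable $l_n$; note that the antichain hypothesis is not used for either part. I would get these probabilistically: choose the patterns $S_x$ independently and uniformly, from $2^{[r]}\setminus\{\emptyset,[r]\}$ in the first case and from $2^{[r]}\setminus\{[r]\}$ in the second. A direct computation shows that the uniform law on $2^{[r]}\setminus\{\emptyset,[r]\}$ has every coordinate marginal equal to $1/2$ (so each $|A_i|$ is $\mathrm{Bin}(n,1/2)$), and that for $r\ge3$ the covariance matrix of $(\mathbf 1[1\in S],\dots,\mathbf 1[r\in S])$ is positive definite --- its smallest eigenvalue is $\tfrac14\cdot\tfrac{2^{r-1}-r}{2^{r-1}-1}>0$, which is exactly why $r=2$ must be excluded here. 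A local central limit theorem for $(|A_1|,\dots,|A_r|)$ then gives $\Pr[|A_1|=\dots=|A_r|=\lfloor n/2\rfloor]=\Theta(n^{-r/2})$, and every such outcome is an admissible $r$-tuple of $\lfloor n/2\rfloor$-sets; since $n^{-r/2}\ge n^{-2r}$ for large $n$ this proves part~(a) for $r\ge3$ (for odd $n$ one first fixes the patterns of boundedly many coordinates so that the rest must realise all-equal marginals, reducing to the even case). For part~(b) the same computation gives all marginals of the uniform law on $2^{[r]}\setminus\{[r]\}$ equal to $p^*=\tfrac{2^{r-1}-1}{2^r-1}$, so one takes $l_n$ to be the nearest integer to $p^*n$ (hence $l_n=(1+o(1))\tfrac{2^{r-1}-1}{2^r-1}n$); the covariance is again non-degenerate and the same local CLT yields $\beta^r_0\big(\binom{[n]}{l_n}\big)=\Theta(n^{-r/2})(2^r-1)^n$. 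This concentration step is the main obstacle; its only delicate points are the non-degeneracy of the covariance (failing precisely at $r=2$) and, for odd $n$, producing integral pattern-multiplicities with the prescribed marginals --- both routine, and the generous exponents $2r$ and $2r+1$ leave so much slack that a crude direct multinomial estimate works in place of the local CLT.

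For part~(c) I would use the antichain property via a flag count. For an ordered pair $A,B\in\cA$ with $A\cap B=\emptyset$, write $a=|A|$, $b=|B|$; there are exactly $a!\,b!\,(n-a-b)!$ permutations $\pi$ of $[n]$ with $\{\pi_1,\dots,\pi_a\}=A$ and $\{\pi_{n-b+1},\dots,\pi_n\}=B$. Each permutation $\pi$ is counted for at most one pair: if $\pi$ serves both $(A,B)$ and $(A',B')$ then $A$ and $A'$ are both prefixes of $\pi$, hence comparable, hence equal since $\cA$ is an antichain, and likewise $B=B'$. Summing over all ordered disjoint pairs of $\cA$ and dividing by $n!$ gives the LYM-type inequality $\sum_{A,B\in\cA,\,A\cap B=\emptyset}\binom{n}{a,b,n-a-b}^{-1}\le 1$, so the number of ordered disjoint pairs is at most $\max_{a+b+c=n}\binom{n}{a,b,c}$, i.e.\ $\beta^2_0(\cA)\le\tfrac12\max_{a+b+c=n}\binom{n}{a,b,c}$. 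The maximising triple is the most balanced partition of $n$ into three parts, two of whose entries equal $\lfloor n/3\rfloor$ when $n\equiv0,1\pmod3$ and $\lceil n/3\rceil$ when $n\equiv2\pmod3$, and the corresponding full level attains the bound, which gives sharpness.

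Finally, for part~(a) with $r=2$ the conditions $A\cap B=\emptyset$ and $A\cup B=[n]$ force $B=[n]\setminus A$, so $\gamma^2_{0,n}(\cA)$ counts the complementary pairs inside $\cA$; equivalently $\cA^{*}:=\{A\in\cA:[n]\setminus A\in\cA\}$ is an antichain and $\gamma^2_{0,n}(\cA)=\tfrac12|\cA^{*}|$. For even $n$, Sperner's theorem (\tref{sperner}) gives $|\cA^{*}|\le|\cA|\le\binom{n}{n/2}$, which is the assertion. For odd $n$, split $\cA^{*}$ into the sets of size $<n/2$ and their complements; the former family $\cB$ is an antichain with all members of size $\le(n-1)/2$ and it is \emph{intersecting} (if $A,B\in\cB$ were disjoint then $A\subseteq[n]\setminus B\in\cA$, contradicting the antichain property), and $\gamma^2_{0,n}(\cA)=|\cB|$. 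Using a symmetric chain decomposition of $2^{[n]}$ restricted to the levels $\le(n-1)/2$, send each $B\in\cB$ to the unique size-$(n-1)/2$ member of its chain; this is injective (an antichain meets each chain at most once) and its image is still intersecting (the image sets contain the originals), so by the Erd\H os--Ko--Rado theorem --- applicable since $(n-1)/2<n/2$ --- it has at most $\binom{n-1}{(n-1)/2-1}=\binom{n-1}{\lfloor n/2\rfloor-1}$ members, as claimed; equality is witnessed by $\cI\cup\{[n]\setminus F:F\in\cI\}$ for a maximum intersecting family $\cI\subseteq\binom{[n]}{(n-1)/2}$.
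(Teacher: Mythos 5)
Your proposal is correct in substance but takes a genuinely different route from the paper's on every part. For part \textbf{(a)} with $r\ge 3$ and for part \textbf{(b)} the paper argues by induction on $r$: given $F_1,\dots,F_{r-1}$ with intersection $A$ and union $B$, the admissible choices of $F_r$ form an antichain in the interval $[\overline{B},\overline{A}]$ and hence number at most $\binom{|B|-|A|}{\lfloor (|B|-|A|)/2\rfloor}$ by Sperner, so the antichain hypothesis is used at every step. You instead discard that hypothesis entirely, bound $\gamma^r_{0,n}(\cA)$ by the pattern count $(2^r-2)^n/r!$ valid for \emph{all} of $2^{[n]}$, and match it from below on the middle level via a local central limit theorem; this yields the stronger conclusion that the polynomial-loss statements hold for arbitrary families, at the price of importing the local CLT together with its lattice and non-degeneracy checks (your eigenvalue computation $\tfrac14\cdot\tfrac{2^{r-1}-r}{2^{r-1}-1}$ correctly isolates why $r=2$ must be excluded from the $\gamma$ statement but not from the $\beta$ one). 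For part \textbf{(c)} your permutation (LYM-type) double count replaces the paper's complementation trick $\cF\cup\overline{\cF}$ combined with Katona's weighted Sperner theorem, and for the odd case of part \textbf{(a)} with $r=2$ you actually reprove Bollob\'as's theorem via symmetric chains and EKR where the paper only cites it. All of these substitutions are sound.

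Two caveats. First, your parenthetical claim that ``a crude direct multinomial estimate works in place of the local CLT'' fails for $r\ge 4$: a single multinomial term only gives $\gamma^r_{0,n}\bigl(\binom{[n]}{\lfloor n/2\rfloor}\bigr)\ge (2^r-2)^n/\bigl(r!\,(n+1)^{2^r-2}\bigr)$, and $2^r-2>2r$ there, so one genuinely must sum over the roughly $n^{(2^r-2-r)/2}$ admissible multiplicity vectors --- which is essentially the local CLT --- to bring the loss down to $n^{O(r)}$. Since the local CLT is your primary argument this does not break the proof, but the remark should be deleted. Second, for $n=3m+2$ your bound $\tfrac12\max_{a+b+c=n}\binom{n}{a,b,c}=\tfrac12\binom{n}{m,m+1,m+1}$ exceeds the displayed bound, which equals $\tfrac12\binom{n}{m,m,m+2}$; however the displayed bound is a typo in the statement, since the claimed extremal family $\binom{[n]}{\lceil n/3\rceil}$ itself contains $\tfrac12\binom{n}{m,m+1,m+1}$ disjoint pairs and the paper's own proof also produces $\tfrac12\binom{n}{\lceil 2n/3\rceil}\binom{\lceil 2n/3\rceil}{m+1}=\tfrac12\binom{n}{m,m+1,m+1}$. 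Your version is the internally consistent one.
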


The reason for the strange indices is that we will prove a somewhat more general result in Section 4. The $r=2$ part of \tref{rtuples} \textbf{(a)} was proved by Bollob\'as \cite{B}.

\begin{theorem}\label{thm:ragasztas1} Let $Q_1,Q_2$ be two non-empty posets.

\textbf{(a)} If $r\ge 2$, then we have
\begin{multline*}
La(n,P_{l(Q_1)+l(Q_2)+1},Q_1\otimes_r Q_2)\le  \\
 n^{2r+2}\max_{0\le i< j\le n}\left\{\binom{n}{j}\binom{j}{i}\gamma^r_{0,j-i}\left(\binom{[j-i]}{\lfloor (j-i)/2\rfloor}\right)La(i,P_{l(Q_1)+1},Q_1)La(n-j,P_{l(Q_2)+1},Q_2)\right\}.
\end{multline*}
Furthermore, if $r\ge 3$ and \cjref{genconj} almost holds for the pairs $P_{l(Q_1)+1},Q_1$ and $P_{l(Q_2)+1}$, $Q_2$, then so it does for the pair $P_{l(Q_1\otimes_r Q_2)+1},Q_1\otimes_r Q_2$.

\textbf{(b)} If $r=1$, then we have
$$La(n,P_{l(Q_1)+l(Q_2)+1},Q_1\otimes_1 Q_2)\le \max_{0\le j\le n}\left\{\binom{n}{j}La(j,P_{l(Q_1)+1},Q_1)La(n-j,P_{l(Q_2)+1},Q_2)\right\}$$

Furthermore, if \cjref{genconj} strongly/almost holds for the pairs $P_{l(Q_1)+1},Q_1$ and $P_{l(Q_2)+1}$, $Q_2$, then so it does for the pair $P_{l(Q_1\otimes_1 Q_2)+1},Q_1\otimes_1 Q_2$.
\end{theorem}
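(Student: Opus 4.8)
\emph{Proof plan.} Write $R:=Q_1\otimes_r Q_2$ and $m:=l(Q_1)+l(Q_2)+1=l(R)$; throughout, $\cF\subseteq 2^{[n]}$ is a family avoiding the $(m{+}1)$-element chain (so that copies of $R$ are not excluded outright), and $\cF_1,\dots,\cF_m$ is its \emph{canonical partition}, meaning $X\in\cF_t$ iff the longest chain of $\cF$ with top $X$ has $t$ elements. Then $X\subsetneq Y$ puts $X$ in an earlier class than $Y$, and $X\in\cF_t$ forces every chain of $\cF$ starting at $X$ to have at most $m-t+1$ elements. The structural point is: in any copy $\phi$ of $R$, each image $\phi(m_k)$ of a middle element lies in $\cF_{l(Q_1)+1}$ --- a chain of length $l(Q_1)$ inside $\phi(Q_1)$ followed by $\phi(m_k)$ shows $\phi(m_k)\in\cF_{\ge l(Q_1)+1}$, while $\phi(m_k)$ followed by a chain of length $l(Q_2)$ in $\phi(Q_2)$ shows $\phi(m_k)\in\cF_{\le m-l(Q_2)}=\cF_{\le l(Q_1)+1}$ --- and consequently $\phi(Q_1)\subseteq\cF_1\cup\dots\cup\cF_{l(Q_1)}$ and $\phi(Q_2)\subseteq\cF_{l(Q_1)+2}\cup\dots\cup\cF_m$. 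Hence every copy of $R$ decomposes, uniquely since the pieces are read off from the canonical classes, as $(\cG_1,\cM,\cG_2)$ with $\cG_1$ a copy of $Q_1$ in $\cF_1\cup\dots\cup\cF_{l(Q_1)}$, $\cM$ an $r$-subset of the antichain $\cA:=\cF_{l(Q_1)+1}$, $\cG_2$ a copy of $Q_2$ in $\cF_{l(Q_1)+2}\cup\dots\cup\cF_m$, with $\bigcup\cG_1\subseteq\bigcap\cM$ and $\bigcup\cM\subseteq\bigcap\cG_2$. For a fixed $\cM$ the admissible $\cG_1$ are the copies of $Q_1$ inside the $P_{l(Q_1)+1}$-free family $(\cF_1\cup\dots\cup\cF_{l(Q_1)})\cap 2^{\bigcap\cM}$, at most $La(|\bigcap\cM|,P_{l(Q_1)+1},Q_1)$ of them; dually, passing to complements in $[n]\setminus\bigcup\cM$, the admissible $\cG_2$ number at most $La(n-|\bigcup\cM|,P_{l(Q_2)+1},Q_2)$.

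\textbf{Part (b) ($r=1$).} Here $\cM=\{M\}$, so $c(R,\cF)\le\sum_{M\in\cA}La(|M|,P_{l(Q_1)+1},Q_1)\,La(n-|M|,P_{l(Q_2)+1},Q_2)$. Grouping by $j=|M|$ and using that $\cA$ is an antichain, the LYM inequality $\sum_j|\cA\cap\binom{[n]}{j}|/\binom{n}{j}\le 1$ gives $c(R,\cF)\le\max_{0\le j\le n}\binom{n}{j}La(j,P_{l(Q_1)+1},Q_1)La(n-j,P_{l(Q_2)+1},Q_2)$, the claimed bound. If \cjref{genconj} holds (strongly, resp. almost) for the two smaller pairs with witnessing union-of-levels families $\cF^1$ on $j^\ast$ points and $\cF^2$ on $n-j^\ast$ points, pick $j^\ast$ attaining the maximum and let $\cF_n$ consist of the full levels of $2^{[n]}$ indexed by the level set of $\cF^1$, the full level $\binom{[n]}{j^\ast}$, and the full levels indexed by the level set of $\cF^2$ shifted up by $j^\ast$; this is a union of at most $m$ full levels, hence $P_{m+1}$-free, and since each $j^\ast$-set lies above exactly $c(Q_1,\cF^1)$ copies of $Q_1$ and below exactly $c(Q_2,\cF^2)$ copies of $Q_2$, one gets $c(R,\cF_n)=\binom{n}{j^\ast}c(Q_1,\cF^1)c(Q_2,\cF^2)$, matching the upper bound up to the factor implicit in ``strongly/almost''.

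\textbf{Part (a) ($r\ge 2$), the bound.} The only new ingredient is to estimate, for $0\le i<j\le n$ (and $i<j$ is automatic, distinct members of $\cM$ forcing $\bigcap\cM\subsetneq\bigcup\cM$), the number of $r$-subsets $\cM\subseteq\cA$ with $|\bigcap\cM|=i$, $|\bigcup\cM|=j$. Fixing $I:=\bigcap\cM$ and $U:=\bigcup\cM$, the map $M\mapsto M\setminus I$ carries $\cM$ to an antichain of subsets of the $(j-i)$-set $U\setminus I$ with empty intersection and full union, so by \tref{rtuples}\,\textbf{(a)} there are, for $r\ge 3$, at most $(j-i)^{2r}\gamma^r_{0,j-i}\!\left(\binom{[j-i]}{\lfloor(j-i)/2\rfloor}\right)$ of them (for $r=2$ one reads the $\gamma$-term as the extremal value supplied by \tref{rtuples}\,\textbf{(a)}, namely $\binom{j-i-1}{\lfloor(j-i)/2\rfloor-1}$ when $j-i$ is odd). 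Summing over the $\binom{n}{j}\binom{j}{i}$ nested pairs $(I,U)$ with $I\subseteq U$, inserting the two $La$-factors, and finally summing over the at most $\binom{n+1}{2}\le n^2$ pairs $(i,j)$ converts the estimate of the first paragraph into the displayed inequality.

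\textbf{Part (a), the ``furthermore'' ($r\ge 3$).} Insert into that inequality the hypotheses $La(i,P_{l(Q_1)+1},Q_1)=O(i^{k_1})c(Q_1,\cF^1_i)$ and $La(n-j,P_{l(Q_2)+1},Q_2)=O((n-j)^{k_2})c(Q_2,\cF^2_{n-j})$ (with constants $k_1,k_2$) coming from ``almost holds'' for the two smaller pairs, let $(i^\ast,j^\ast)$ realize the resulting maximum, set $m^\ast:=i^\ast+\lfloor(j^\ast-i^\ast)/2\rfloor$, and let $\cF_n$ be the union of the full levels of $2^{[n]}$ indexed by the level set of $\cF^1_{i^\ast}$, the single full level $\binom{[n]}{m^\ast}$, and the full levels indexed by the level set of $\cF^2_{n-j^\ast}$ shifted up by $j^\ast$ --- a union of at most $m$ full levels, hence $P_{m+1}$-free. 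Counting only the copies of $R$ in $\cF_n$ whose middle $r$-set $\cM$ has $|\bigcap\cM|=i^\ast$, $|\bigcup\cM|=j^\ast$, the three scales separate as in the upper bound: for each of the $\binom{n}{j^\ast}\binom{j^\ast}{i^\ast}$ nested pairs $(I,U)$ there are, up to the constant $r!$, $\gamma^r_{0,j^\ast-i^\ast}\!\left(\binom{[j^\ast-i^\ast]}{\lfloor(j^\ast-i^\ast)/2\rfloor}\right)$ admissible $\cM$, each completed in $c(Q_1,\cF^1_{i^\ast})c(Q_2,\cF^2_{n-j^\ast})$ ways, so $c(R,\cF_n)$ is within an $O(n^{2r+2+k_1+k_2})$ factor of the upper bound, i.e. \cjref{genconj} almost holds for the glued pair. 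The step I expect to be the real obstacle is this last construction: it has to be a union of honest \emph{full levels of $2^{[n]}$}, not of subcubes as a naive recursive gluing would give, while still producing the $\gamma^r$ count --- the key being that among the $r$-subsets of the fixed level $\binom{[n]}{m^\ast}$ only those with the prescribed intersection and union sizes can simultaneously sit above a $Q_1$-copy confined to $i^\ast$ coordinates and below a $Q_2$-copy lying above $j^\ast$ coordinates, and that these already have the right order of magnitude. (For $r=2$ this mechanism degenerates, since two distinct sets of an odd middle level never cover the ground set, which is why only $r\ge 3$ is claimed.)
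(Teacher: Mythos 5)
Your proof is correct and follows essentially the same route as the paper: the canonical-partition argument forcing the $r$ middle sets into $\cF_{l(Q_1)+1}$, the reduction of counting middle $r$-tuples with prescribed intersection $I$ and union $U$ to $\gamma^r_{0,j-i}$ on the interval $[I,U]$ (the paper packages this step as \tref{gamma} rather than invoking \tref{rtuples} directly, but the content is identical), and the same union-of-full-levels constructions for the ``furthermore'' parts. The only cosmetic difference is your use of the LYM inequality in part (b) where the paper cites \cref{lchain} with $l=k=1$, which is the same fact.
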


\begin{theorem}
\label{thm:ragasztas2}
Let $Q$ be a non-empty poset.

\textbf{(a)} If $r\ge 2$ and $n\in \mathbb{N}$, then there exists an $i=i(r,n)$ such that
$$La(n,P_{l(Q)+2},Q\oplus r)\le \max_{0\le j\le n}\left\{\binom{n}{i}\beta_0^r\left(\binom{[n-i]}{j-i}\right)La(j,P_{l(Q)+1},Q)\right\}.$$
Furthermore, if \cjref{genconj} almost holds for the pair $P_{l(Q)+1},Q$, then so it does for the pair $P_{l(Q)+2},Q_1\oplus r $.

\textbf{(b)} If $r=1$, then we have
$$La(n,P_{l(Q)+2},Q\oplus 1)\le \max_{0\le j\le n}\left\{\binom{n}{j}La(j,P_{l(Q)+1},Q)\right\}$$
Furthermore, if \cjref{genconj} strongly/almost holds for the pair $P_{l(Q)+1},Q$, then so it does for the pair $P_{l(Q)+2},Q\oplus 1 $.
\end{theorem}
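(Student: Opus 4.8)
The plan is to combine the canonical partition of a $P_{l(Q)+2}$-free family with the reduction machinery already set up in the paper. Let $\cF\subseteq 2^{[n]}$ be $P_{l(Q)+2}$-free with canonical partition $\cF_1,\dots,\cF_m$ into antichains; then $m\le l(Q)+1$, and if $m\le l(Q)$ then $\cF$ contains no $(l(Q)+1)$-chain, hence no copy of $Q\oplus r$ at all, so we may assume $m=l(Q)+1$. The defining property of the canonical partition is that $A\subsetneq B$ with $A\in\cF_i$, $B\in\cF_{i'}$ forces $i<i'$. Hence in any copy of $Q\oplus r$ the $l(Q)$ images of a longest chain in $Q$ lie in $\cF_i$'s with strictly increasing indices, so the top of this chain is in some $\cF_i$ with $i\ge l(Q)$; each of the $r$ top sets strictly contains it and so must lie in $\cF_{l(Q)+1}$. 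Thus every copy of $Q\oplus r$ has its $r$ top sets in the antichain $\cH:=\cF_{l(Q)+1}$ and its $Q$-part inside $\cG:=\cF_1\cup\dots\cup\cF_{l(Q)}$, which, being a union of $l(Q)$ antichains, is $P_{l(Q)+1}$-free.

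I would then decompose copies of $Q\oplus r$ by their top part. For $r\ge2$ the $r$ top sets are exactly the maximal sets of the copy, and each set of the $Q$-part is contained in all of them, so copies of $Q\oplus r$ in $\cF$ are in bijection with pairs $(\cM,\cQ)$ where $\cM\in\binom{\cH}{r}$ and $\cQ$ is a copy of $Q$ in $\cG\cap 2^{\bigcap\cM}$; for $r=1$ one just lets $\cM=\{M\}$ range over $\cH$ with $\bigcap\cM=M$. Since $\cG\cap 2^{\bigcap\cM}$ is $P_{l(Q)+1}$-free on a ground set of size $|\bigcap\cM|$,
\[
c(Q\oplus r,\cF)\ \le\ \sum_{\cM\in\binom{\cH}{r}} La\bigl(|{\textstyle\bigcap}\cM|,\,P_{l(Q)+1},\,Q\bigr).
\]

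To bound the sum, group it by $V:=\bigcap\cM$: writing $M=V\sqcup(M\setminus V)$, the residual sets $M\setminus V$ form an antichain in $2^{[n]\setminus V}$, and the $\cM$ with $\bigcap\cM=V$ are precisely the $r$-subsets of that antichain with empty intersection, a $\beta^r_0$-quantity. Summing over $V$ of each size (there are $\binom{n}{|V|}$ of them) and then over the sizes, for $r\ge2$ one uses \tref{rtuples}(b) to pass from the residual antichain to a single full level at the cost of a factor polynomial in $n$, and absorbs the outer sum into a maximum; this gives the stated bound. For $r=1$ there is no intersection condition: $c(Q\oplus1,\cF)\le\sum_{M\in\cH}La(|M|,P_{l(Q)+1},Q)$, and since $\sum_j |\cH\cap\binom{[n]}{j}|/\binom{n}{j}\le1$ by the LYM inequality, this is at most $\max_j\binom{n}{j}La(j,P_{l(Q)+1},Q)$, with no polynomial loss — which is why the $r=1$ bound is clean.

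For the ``furthermore'' statements, assume \cjref{genconj} (strongly or almost) holds for $(P_{l(Q)+1},Q)$ via families that are unions of full levels, and on $[n]$ take $\cF^{*}=\cF^{*}_{\mathrm{bot}}\cup\binom{[n]}{s}$, where $\cF^{*}_{\mathrm{bot}}$ is the union of those full levels of $2^{[n]}$ whose trace on an $s$-set reproduces the near-optimal $Q$-rich $P_{l(Q)+1}$-free family on that set, and $s$ is chosen to maximize the count; $\cF^{*}$ is a union of full levels of height $l(Q)+1$, hence $P_{l(Q)+2}$-free. A copy of $Q\oplus r$ in $\cF^{*}$ is an $r$-subset $\cM$ of level $s$ together with a copy of $Q$ inside $\cF^{*}_{\mathrm{bot}}\cap 2^{\bigcap\cM}$, which is exactly the $Q$-optimal family on $\bigcap\cM$; grouping by $\bigcap\cM$ and counting how the residuals above it may be chosen with empty intersection recovers the expression from the upper bound up to the same polynomial factor (exactly, for $r=1$), so \cjref{genconj} almost holds for $(P_{l(Q)+2},Q\oplus r)$ when $r\ge2$ and is inherited verbatim when $r=1$. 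The structural reduction here is routine; I expect the main obstacle to be keeping the bookkeeping of the polynomial factors under control when invoking \tref{rtuples}(b), and checking that the union-of-levels construction really does attain the upper bound up to that same factor, so that the conclusion of \cjref{genconj} genuinely transfers.
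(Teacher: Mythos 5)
Your proof is correct and follows essentially the same route as the paper: canonical partition, locating the $r$ top sets in the last antichain $\cF_{l(Q)+1}$, bounding the copies of $Q$ below their intersection by $La(|\bigcap\cM|,P_{l(Q)+1},Q)$, reducing the resulting $\beta^r$-weighted sum over the antichain to a single full level, and realizing the bound by a union of full levels. The only cosmetic differences are that you inline the reduction from $\beta^r_i$ to $\beta^r_0$ (grouping on the intersection set $V$ and applying \tref{rtuples}\textbf{(b)} to the residual antichain on $[n]\setminus V$) where the paper invokes \tref{intuni}\textbf{(b)} directly, and that for $r=1$ you use the LYM inequality in place of \cref{lchain}.
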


\begin{figure}[h]
\centering
\includegraphics{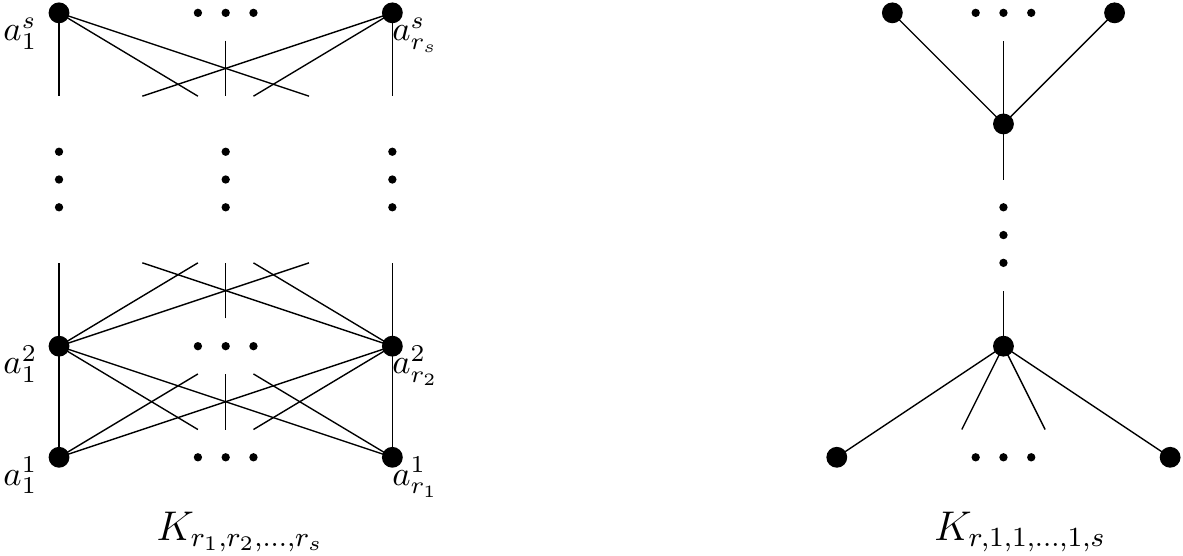}
\caption{The Hasse diagrams of multi-level posets.}
\label{fig:defs2}
\end{figure}

We can apply \tref{ragasztas1} and \tref{ragasztas2} to complete multi-level posets. Let $K_{r_1,r_2,\dots,r_s}$ denote the poset on $\sum_{i=1}^s r_i$ elements $a_1^1, a^1_2,\dots , a^1_{r_1},a^2_1,a^2_2,\dots, a^2_{r_2},\dots , a^s_1, a^s_2, \dots,a^s_{r_s}$ with $a^i_h < a^j_l$ if and only if $i < j$. Observe that $\bigvee_r=K_{1,r}$, $B=K_{2,2}$ and $D_r=K_{1,r,1}$.  

\begin{corollary}
\label{cor:genmult} For any complete multi-level poset $K_{r_1,r_2,\dots,r_s}$ \cjref{genconj} almost holds for the pair $P_{s+1},K_{r_1,r_2,\dots,r_s}$.
\end{corollary}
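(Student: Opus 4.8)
The plan is to prove \cref{genmult} by induction on $s$, the number of levels, using the associativity-type decomposition $K_{r_1,\dots,r_s}=K_{r_1,\dots,r_{s-1}}\otimes_{r_s}\textbf{0}$ when $s\ge 2$ is treated via $Q\oplus r$, but more naturally by peeling off the \emph{top} level: writing $K_{r_1,r_2,\dots,r_s}=K_{r_1}\otimes_{r_2}K_{r_3,\dots,r_s}$ is not quite right because $\otimes_r$ inserts only a single antichain layer, so instead I would iterate the decomposition $K_{r_1,\dots,r_s}=\left(K_{r_1,\dots,r_{s-2}}\right)\otimes_{r_{s-1}}\left(K_{r_s}\right)$ for $s\ge 3$, and handle the base cases $s=1$ and $s=2$ directly. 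For $s=1$, $K_{r_1}$ is an antichain of size $r_1$, and $P_2$-free means $\cF$ is an antichain, so $La(n,P_2,K_{r_1})=\binom{\binom{n}{\lfloor n/2\rfloor}}{r_1}$, achieved by a full middle level — \cjref{genconj} strongly holds. For $s=2$, $K_{r_1,r_2}=\textbf{0}\oplus$-type but with a nonempty bottom: $K_{r_1,r_2}=K_{r_1}\oplus r_2$ in the $Q\oplus r$ notation with $Q=K_{r_1}$, so \tref{ragasztas2} applies, and since \cjref{genconj} strongly (hence almost) holds for $P_2,K_{r_1}$, part (a) of \tref{ragasztas2} gives that it almost holds for $P_3,K_{r_1,r_2}$.

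For the inductive step I would use \tref{ragasztas1}\textbf{(a)}: given $s\ge 3$, set $Q_1=K_{r_1,r_2,\dots,r_{s-2}}$ (with the convention that $Q_1$ is empty and we instead use \tref{ragasztas2} if $s=2$, or that $Q_1=K_{r_1}$ is an antichain when $s=3$) and $Q_2=K_{r_s}$, which is an antichain, and $r=r_{s-1}$. Then $Q_1\otimes_{r_{s-1}}Q_2=K_{r_1,\dots,r_s}$ and $l(Q_1\otimes_{r_{s-1}}Q_2)=l(Q_1)+l(Q_2)+1=(s-2)+1+1=s$, so the relevant forbidden chain is $P_{s+1}$, matching the statement. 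By induction \cjref{genconj} almost holds for $P_{l(Q_1)+1},Q_1=P_{s-1},K_{r_1,\dots,r_{s-2}}$, and it strongly (hence almost) holds for $P_2,K_{r_s}$ by the $s=1$ base case. One must check $r_{s-1}\ge 2$ or handle $r_{s-1}=1$ separately: if $r_{s-1}=1$ we invoke \tref{ragasztas1}\textbf{(b)} instead, which also preserves "almost holds"; if $r_{s-1}\ge 3$ we are in the main regime of part (a); and the case $r_{s-1}=2$ needs the $r\ge 2$ clause of part (a), which states the upper bound but — reading carefully — only asserts the "almost holds" conclusion for $r\ge 3$. So $r_{s-1}=2$ would need extra care.

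The main obstacle is precisely this $r=2$ gap in \tref{ragasztas1}\textbf{(a)}: the theorem only transfers the "almost holds" property for $r\ge 3$. To cover a middle level of size exactly $2$, I would instead avoid splitting at that level and regroup: choose the split point to peel off a level $r_t$ with $t$ chosen so that $r_t\ge 3$ if any such interior level exists; if \emph{every} interior level has size $\le 2$, one can absorb a size-$2$ layer into an adjacent block and apply \tref{ragasztas2} (the $Q\oplus r$ statement, whose part (a) does hold for all $r\ge 2$) to peel off the \emph{top} level $K_{r_s}$ no matter its size. Indeed, the cleanest route overall is to always peel the top level via \tref{ragasztas2}: take $Q=K_{r_1,\dots,r_{s-1}}$, so $Q\oplus r_s=K_{r_1,\dots,r_s}$ and $l(Q)+2=(s-1)+2=s+1$; by induction \cjref{genconj} almost holds for $P_s,K_{r_1,\dots,r_{s-1}}$, and \tref{ragasztas2}\textbf{(a)} (for $r_s\ge 2$) or \textbf{(b)} (for $r_s=1$) then yields that it almost holds for $P_{s+1},K_{r_1,\dots,r_s}$, closing the induction. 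This sidesteps the $r=2$ issue entirely since \tref{ragasztas2}\textbf{(a)} has no such restriction, and the only remaining routine point is verifying the polynomial factor $n^{2r_s+1}$ accumulated at each step multiplies to a single polynomial in $n$ whose degree depends only on $\sum r_i$, i.e. only on $P$ and $Q$, as required by the definition of "almost holds."
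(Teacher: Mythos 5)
Your final route --- induction on the number of levels with base case Sperner's theorem and inductive step peeling off the top level via $K_{r_1,\dots,r_s}=K_{r_1,\dots,r_{s-1}}\oplus r_s$ and \tref{ragasztas2} --- is exactly the paper's proof, and your observation that this sidesteps the $r=2$ restriction in \tref{ragasztas1}\textbf{(a)} is precisely why the paper uses \tref{ragasztas2} here (reserving \tref{ragasztas1} for \cref{genmult1}, where the interior levels of size $1$ make it applicable). The earlier detour through $\otimes_r$ decompositions is unnecessary but does not affect correctness.
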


\begin{corollary}\label{cor:genmult1} \cjref{genconj} strongly holds for the pair $P_{s+1},K_{r_1,r_2,\dots,r_s}$ if  for every $i<s$ at least one of $r_i$ and $r_{i+1}$ is equal to $1$. 
\end{corollary}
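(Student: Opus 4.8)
The plan is to prove this by induction on $s$, realising $K_{r_1,\dots,r_s}$ as an iterated ``gluing'' of antichains using only the operations $\otimes_1$ and $\oplus 1$ and invoking the ``furthermore'' parts of \tref{ragasztas1}\textbf{(b)} and \tref{ragasztas2}\textbf{(b)}. The key point is that these are exactly the operations for which those furthermore-clauses preserve the property that \cjref{genconj} \emph{strongly} holds; the operations $\otimes_r$, $\oplus r$ with $r\ge 2$ only preserve ``almost holds'', which is the reason that without the extra hypothesis one can assert no more than \cref{genmult}.

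For the base case $s=1$, $K_{r_1}$ is an antichain on $r_1$ elements; since the definition of a copy imposes no condition when the pattern poset has no relations, every $r_1$-element subfamily of any $\cF$ is a copy, so $c(K_{r_1},\cF)=\binom{|\cF|}{r_1}$, which over antichains $\cF\subseteq 2^{[n]}$ is maximised by a single middle level. Hence \cjref{genconj} strongly holds for $(P_2,K_{r_1})$. For the inductive step I will also use that complementation $F\mapsto [n]\setminus F$ turns $P_{s+1}$-free families into $P_{s+1}$-free families (self-duality of $P_{s+1}$), copies of $K_{r_1,\dots,r_s}$ into copies of $K_{r_s,\dots,r_1}$, and unions of full levels into unions of full levels; thus \cjref{genconj} strongly holds for $(P_{s+1},K_{r_1,\dots,r_s})$ if and only if it does for the reversed sequence, which also satisfies the hypothesis.

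Now let $s\ge 2$. If $r_s=1$ (and, after possibly reversing, also if $r_1=1$), then $K_{r_1,\dots,r_s}=K_{r_1,\dots,r_{s-1}}\oplus 1$; the truncated sequence still satisfies the hypothesis, so by induction \cjref{genconj} strongly holds for $(P_s,K_{r_1,\dots,r_{s-1}})$, and \tref{ragasztas2}\textbf{(b)} upgrades this to $(P_{s+1},K_{r_1,\dots,r_s})$ since $l(K_{r_1,\dots,r_{s-1}})+2=s+1$. Otherwise $r_1\ge 2$ and $r_s\ge 2$; the hypothesis then forces $r_2=1$ (in particular $s\ge 3$), and $K_{r_1,\dots,r_s}=K_{r_1}\otimes_1 K_{r_3,\dots,r_s}$, where $K_{r_1}$ is an antichain (base case) and $K_{r_3,\dots,r_s}$ is an antichain or a height-$(s-2)$ complete multi-level poset satisfying the hypothesis (induction). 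Since $l(K_{r_1})=1$, $l(K_{r_3,\dots,r_s})=s-2$, and $l(K_{r_1}\otimes_1 K_{r_3,\dots,r_s})=1+(s-2)+1=s$, \tref{ragasztas1}\textbf{(b)} gives that \cjref{genconj} strongly holds for $(P_{s+1},K_{r_1,\dots,r_s})$. As these two cases exhaust all sequences with $s\ge 2$, the induction closes.

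The crux — and the only place the hypothesis enters — is that ``no two consecutive $r_i$ are $\ge 2$'' guarantees a level of size $1$ between any two ``thick'' levels to split along, together with (via reversal) a size-$1$ level at the top or bottom to peel off; this is precisely what keeps every gluing step of type $\otimes_1$ or $\oplus 1$, so that the exact, large-$n$ extremal families produced remain unions of full levels. What remains after that is routine: checking that the pieces arising at each stage are ones for which strong holding has already been established, and that the identities $l(Q_1\otimes_1 Q_2)=l(Q_1)+l(Q_2)+1$ and $l(Q\oplus 1)=l(Q)+1$ make the chain index of the forbidden poset come out to $P_{s+1}$ at every step.
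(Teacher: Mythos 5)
Your proposal is correct and follows essentially the same route as the paper: induction on the number of levels with Sperner's theorem as the base case, the ``furthermore'' parts of \tref{ragasztas1}\textbf{(b)} and \tref{ragasztas2}\textbf{(b)} for the inductive step, and complementation $\cF\mapsto\overline{\cF}$ to handle the reversed sequence. The only cosmetic difference is that you always split at a fixed position dictated by your case analysis, whereas the paper splits at an arbitrary index $i$ with $r_i=1$; this changes nothing of substance.
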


\cref{genmult1} does not tell us anything about the set sizes in the family containing the most number of copies of $K_{r_1,r_2,\dots,r_s}$. The next theorem gives more insight for an even more special case.

\begin{theorem} \label{thm:multi1}
The value of $La(n,P_{l+3},K_{r,1,\dots,1,s})$ is attained for a family $\cF=\cup_{j=1}^{l+2}\binom{[n]}{i_j}$, where $i_1=\lfloor i_2/2\rfloor$, $i_{l+2}=\lfloor (n+i_{l+1})/2\rfloor$ and $i_3-i_2$, $i_4-i_3, \dots, i_{l+1}-i_l$ differ by at most 1.
\end{theorem}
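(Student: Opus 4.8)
The plan is to combine the canonical partition with the profile polytope method, using that the bottom and top antichains of a copy of $K:=K_{r,1,\dots,1,s}$ (with $l$ singleton levels in the middle, so $l(K)=l+2$ and $P_{l+3}=P_{l(K)+1}$) are governed by Sperner's theorem without any loss. Let $\cF\subseteq 2^{[n]}$ be $P_{l+3}$-free with $c(K,\cF)$ maximal, and let $\cF=\cF_1\cup\cF_2\cup\dots\cup\cF_{l+2}$ be its canonical partition into $l+2$ antichains. Since $a<b$ whenever $G\subsetneq G'$ with $G\in\cF_a$, $G'\in\cF_b$, a maximal set of any copy of $K$ sits atop a chain of $l+2$ sets of $\cF$ and hence lies in $\cF_{l+2}$; reading positions off downwards, the $t$-th set of the middle chain then lies in $\cF_{t+1}$ and the $r$ minimal sets all lie in $\cF_1$. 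Thus a copy of $K$ is exactly a chain $C_1\subsetneq\dots\subsetneq C_l$ in $\cG:=\cF_2\cup\dots\cup\cF_{l+1}$ together with an $r$-subset of $\{A\in\cF_1:A\subsetneq C_1\}$ and an $s$-subset of $\{B\in\cF_{l+2}:B\supsetneq C_l\}$, so
\[
c(K,\cF)=\sum_{C_1\subsetneq\dots\subsetneq C_l\ \text{in}\ \cG}\binom{g(C_1)}{r}\binom{h(C_l)}{s},
\]
where $g(C)=|\{A\in\cF_1:A\subsetneq C\}|$ and $h(C)=|\{B\in\cF_{l+2}:B\supsetneq C\}|$; here one also checks that the canonical partition of $\cG$ is exactly $(\cF_2,\dots,\cF_{l+1})$, so "$C_t\in\cF_{t+1}$" is the same as "$C_1\subsetneq\dots\subsetneq C_l$ is an $l$-chain in $\cG$".

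Next I would bound $g$ and $h$ by Sperner's theorem: $\{A\in\cF_1:A\subsetneq C\}$ is an antichain of proper subsets of $C$, so $g(C)\le\binom{|C|}{\lfloor|C|/2\rfloor}$, and $\{B\setminus C:B\in\cF_{l+2},\ B\supsetneq C\}$ is an antichain of non-empty subsets of $[n]\setminus C$, so $h(C)\le\binom{n-|C|}{\lfloor(n-|C|)/2\rfloor}$. Hence $c(K,\cF)\le\Phi(\cG)$, where
\[
\Phi(\cG):=\sum_{G_1\subsetneq\dots\subsetneq G_l\ \text{in}\ \cG}\binom{\binom{|G_1|}{\lfloor|G_1|/2\rfloor}}{r}\binom{\binom{n-|G_l|}{\lfloor(n-|G_l|)/2\rfloor}}{s}
\]
depends only on the $P_{l+1}$-free family $\cG$ and is a non-negative linear functional of its $l$-chain size-profile. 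By the ($l$-chain) profile polytope method — the same input that yields \tref{gp}, whereby the convex hull of the $l$-chain size-profiles of $P_{l+1}$-free families is a polytope whose vertices are unions of $l$ full levels — this functional is maximised over all $P_{l+1}$-free families by some $\cG_0=\binom{[n]}{i_2}\cup\dots\cup\binom{[n]}{i_{l+1}}$, and then $\Phi(\cG_0)=\binom{n}{i_{l+1}}\binom{i_{l+1}}{i_l}\cdots\binom{i_3}{i_2}\cdot\binom{\binom{i_2}{\lfloor i_2/2\rfloor}}{r}\cdot\binom{\binom{n-i_{l+1}}{\lfloor(n-i_{l+1})/2\rfloor}}{s}$.

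To close the loop I would set $i_1:=\lfloor i_2/2\rfloor$ and $i_{l+2}:=i_{l+1}+\lfloor(n-i_{l+1})/2\rfloor=\lfloor(n+i_{l+1})/2\rfloor$ and let $\cF^{*}:=\bigcup_{j=1}^{l+2}\binom{[n]}{i_j}$. This is a union of $l+2$ full levels, hence $P_{l+3}$-free, and for it $g(C)=\binom{i_2}{i_1}=\binom{i_2}{\lfloor i_2/2\rfloor}$ for every $C\in\binom{[n]}{i_2}$ and $h(C)=\binom{n-i_{l+1}}{\lfloor(n-i_{l+1})/2\rfloor}$ for every $C\in\binom{[n]}{i_{l+1}}$, so both Sperner bounds hold with equality and $c(K,\cF^{*})=\Phi(\cG_0)$. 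Combined with the inequalities above this gives $c(K,\cF^{*})=La(n,P_{l+3},K)$, so $\cF^{*}$ is extremal. Its shape is the advertised one: $\binom{\binom{i_2}{i_1}}{r}$ and $\binom{\binom{n-i_{l+1}}{i_{l+2}-i_{l+1}}}{s}$ are maximised precisely at $i_1=\lfloor i_2/2\rfloor$ and $i_{l+2}-i_{l+1}=\lfloor(n-i_{l+1})/2\rfloor$, while for fixed $i_2,i_{l+1}$ the factor $\binom{i_{l+1}}{i_l}\cdots\binom{i_3}{i_2}=\frac{i_{l+1}!}{i_2!\,(i_3-i_2)!\cdots(i_{l+1}-i_l)!}$ is largest exactly when the gaps $i_3-i_2,\dots,i_{l+1}-i_l$ differ by at most one, since a product of factorials with fixed sum of arguments is smallest for the most balanced partition.

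The step I expect to need the most care is the appeal to the profile polytope method: one must know that the vertices of the $l$-chain profile polytope of $P_{l+1}$-free families are exactly the unions of $l$ full levels, so that any non-negative linear functional of the profile is maximised by such a family. This is essentially the content of the profile polytope argument behind \tref{gp}; the one new feature here is the size-dependent weight attached to the first and last set of each chain, which alters the objective but not the polytope, so the argument transfers unchanged. A minor loose end is a possible degeneracy of the optimum — if the optimal $i_2$ or $n-i_{l+1}$ were so small that $\binom{i_2}{\lfloor i_2/2\rfloor}<r$ or $\binom{n-i_{l+1}}{\lfloor(n-i_{l+1})/2\rfloor}<s$, collapsing the bottom or top level — but then $\Phi$ would vanish, which is not optimal for $n$ large, so such cases are excluded by inspection.
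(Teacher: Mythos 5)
Your proof is correct, and it follows the paper's strategy in all essentials: canonical partition of the $P_{l+3}$-free family, Sperner bounds for the $r$ bottom and $s$ top sets, and an appeal to Corollary~\ref{cor:lchain} to reduce to unions of full levels. The one genuine difference is the decomposition: you condition on the entire middle chain $C_1\subsetneq\dots\subsetneq C_l$ and apply Corollary~\ref{cor:lchain} once with $l=k=l$ to the $l$-Sperner family $\cF_2\cup\dots\cup\cF_{l+1}$, whereas the paper conditions only on the pair $(F_2,F_{l+1})$, applies the corollary with $l=k=2$, and then invokes Theorem~\ref{thm:gp} to handle the $l-2$ intermediate chain elements. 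Your version buys a uniform treatment (no case split between $l=1$ and $l\ge 2$, no black-box use of Theorem~\ref{thm:gp}) and yields the balanced-gaps condition on $i_3-i_2,\dots,i_{l+1}-i_l$ directly from minimising a product of factorials with fixed sum; the paper's version inherits that condition from the known extremal structure for $La(m,P_{l-1},P_{l-2})$. Two details you handle correctly but should keep explicit in a write-up: every witnessing bijection of a copy of $K_{r,1,\dots,1,s}$ must place the bottom sets in $\cF_1$, the $t$-th middle set in $\cF_{t+1}$ and the top sets in $\cF_{l+2}$ (so each copy is counted exactly once in your sum, even though the definition of a copy does not require images of incomparable elements to be incomparable), and the degenerate boundary cases $i_2\in\{0,1\}$ or $i_{l+1}\in\{n-1,n\}$, which also occur implicitly in the paper's sketch.
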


The rest of the paper is organized as follows: we prove \tref{diamond} and \tref{easy} in Section 2. We explain the profile polytope method and prove \tref{profile} in Section 3. \tref{rtuples}, \tref{ragasztas1}, \tref{ragasztas2}, and their corollaries  are proved in Section 4, while Section 5 contains some concluding remarks.

\section{Proofs of \tref{diamond} and \tref{easy}}

\begin{proof}[Proof of \tref{diamond}]
We start by proving the upper bound. Let $\cF\subseteq 2^{[n]}$ be a $D_k$-free family. As for any poset $P$  the canonical partition of a $P$-free family can consist of at most $|P|-1$ subfamilies, we can assume that the canonical partition of $\cF$ is $\cup_{i=1}^{k+1}\cF_i$. In any copy of $D_l$ in $\cF$, the sets corresponding to the top and bottom element of $D_l$ come from $\cF_i$ and $\cF_j$ with $i-j\ge 2$. The number of such pairs of indices is $\binom{k+1}{2}-k$. Let us bound the number of copies of $D_l$ with top element from $\cF_i$ and bottom element from $\cF_j$. As $\cF_i\cup \cF_j$ is $P_3$-free, there are at most $La(n,P_3,P_2)$ many ways to choose the top and the bottom elements $F_B\subset F_T$. As $\cF$ is $D_k$-free there can be at most $k-1$ sets in $\cF$ lying between $F_B$ and $F_T$, so the number of copies of $D_l$ with $F_B,F_T$ being top and bottom is at most $\binom{k-1}{l}$. The upper bound on $La(n,D_k,D_l)$ follows.

For the lower bound we need a construction. Let $\cF_1\cup \cF_2  \subseteq 2^{[n-k+1]}$ be the canonical partition of the $P_3$-free family $\cF$ with $c(P_2, \cF)=La(n-k+1,P_3,P_2)$. For $j=3,4,\dots, k+1$ let $\cF_j=\{F\cup [n-k,n-k+j-1]:F\in \cF_2\}$. We claim that $\cG=\cup_{i=1}^{k+1}\cF_i$ is $D_k$-free with $c(D_l, \cF)\ge \binom{k-1}{l}La(n-k+1,P_3,P_2)$. Indeed, every set $G\in \cG$ is contained in a set $F_{k+1}\in \cF_{k+1}$ and contains a set $F_1\in\cF_1$, therefore if there was a copy of $D_k$, we could assume that its bottom element is from $\cF_1$ and its top element is from $\cF_{k+1}$. But any $F_{k+1}\in\cF_{k+1}$ contains exactly one element from each $\cF_i$ where $i=2,3,\dots,k$, so there is no space for a copy of $D_k$. On the other hand, for every pair $F_1\subset F_2$ in $\cF_1\cup \cF_2$ we can add $l$ sets from $\{F_2\cup [n-k,n-k+j-1]: j=3,4,\dots,k+1\}$ to form a copy of $D_l$. For each such pair we will obtain $\binom{k-1}{l}$ such copies.
\end{proof}




\begin{proof}[Proof of \tref{easy}] To prove \textbf{(a)}, by symmetry, it is enough to show $La(n,\bigvee,P_2)=\binom{n}{\lfloor  n/2\rfloor}$. Consider  any $\bigvee$-free family $\cF\subseteq 2^{[n]}$ and its canonical partition $\cF_1\cup \cF_2$. By the $\bigvee$-free property of $\cF$, elements of $\cF_1$ are contained in at most one copy of $P_2$. Also, as the $\bigvee$-free property implies the $P_3$-free property, every copy of $P_2$ in $\cF$ must contain a set from $\cF_1$. Sperner's theorem yields $c(P_2,\cF)\le |\cF_1|\le \binom{n}{\lfloor  n/2\rfloor}$.  On the other hand $\cF:=\{F\subseteq [n]: |F|=\lfloor n/2\rfloor\} \cup \{[n]\}$ is $\bigvee$-free and every $\lfloor n/2\rfloor$-element set forms a copy of $P_2$ with $[n]$.

We continue with proving \textbf{(b)}. We will need the following definition. For any family $\cF$, the \textit{comparability graph of $\cF$} has vertex set $\cF$ and two sets $F,F'\in \cF$ are joined by an edge if $F\subseteq F'$ or $F'\subseteq F$ holds. The connected components of the comparability graph of $\cF$ are said to be the \textit{components of $\cF$.} If a family $\cF$ is both $\bigvee$-free and $\bigwedge$-free, then its components are either isolated vertices or isolated edges in the comparability graph. Therefore $c(P_2,\cF)$ is the number of components that are isolated edges. It follows that $La(n,\{\bigvee,\bigwedge\},P_2)\le \frac{1}{2}La(n,\{\bigvee,\bigwedge\},P_1)={n-1 \choose \lfloor (n-1)/2\rfloor}$ where the result in the last equation was proved by Katona and Tarj\'an \cite{KT}. The construction (given also in \cite{KT}) $\cF:=\binom{[n-1]}{\lfloor (n-1)/2\rfloor}\cup \{\{n\}\cup F:F\in\binom{[n-1]}{\lfloor (n-1)/2\rfloor}\}$ shows that the above upper bound can be attained.

To prove \textbf{(c)}, let us consider a $B$-free family $\cF\subseteq 2^{[n]}$ and let $\cM=\{M \in \cF: \exists F',F''\in \cF \,\,\textrm{such that}\, F'\subsetneq M\subsetneq F''\}$. As $B$-free implies $P_4$-free, we obtain that $\cM$ is an antichain, thus $|\cM|\le \binom{n}{\lfloor n/2\rfloor}$, by \tref{sperner}. Moreover, if $M\in \cM$, then there do not exist two elements $F_1,F_2\in \cF$ with $M\subsetneq F_1,F_2$. Indeed, by the definition of $\cM$ there exists $F' \in \cF$ with $F'\subsetneq M$, and $F',M,F_1,F_2$ would form a copy of $B$. Similarly, for every $M\in \cM$ there exists exactly one element $F\in \cF$ with $F\subsetneq M$. Therefore a copy of $D_r$ contains $r$ elements of $M$, and they determine the remaining two elements, which implies $c(D_r, \cF)\le {|\cM| \choose r}\le {{n \choose \lfloor n/2\rfloor}\choose r}$.
The construction $\cF:=\{\emptyset,[n]\}\cup \binom{[n]}{\lfloor n/2\rfloor}$ shows that this upper bound can be attained.

To prove \textbf{(d)}, by symmetry, it is enough to show $La(n,\bigvee,\bigwedge_r)={{n \choose \lfloor n/2\rfloor} \choose r}$. If $\cF$ is $\bigvee$-free, then it is in particular $P_3$-free. Consider its canonical partition. Then a copy of $\bigwedge_r$ contains $r$ elements from $\cF_1$ and one from $\cF_2$. Moreover, an $r$-tuple from $\cF_1$ may form a copy of $\bigwedge_r$ with at most one element from $\cF_2$, otherwise there is a copy (actually $r$ copies) of $\bigvee$ in $\cF$. As $\cF_1$ is an antichain, by \tref{sperner}, the upper bound $La(n,\bigvee,\bigwedge_r)\le {{n \choose \lfloor n/2\rfloor} \choose r}$ follows and $\cF:=\{[n]\}\cup \binom{[n]}{\lfloor n/2\rfloor}$ shows that this can be attained.
\end{proof}

\section{The profile polytope method}
In this section we prove \tref{profile} after introducing the notions of profile vectors and profile polytopes. For a family $\cF \subseteq 2^{[n]}$ of sets, let $\alpha(\cF)=(\alpha_0,\alpha_1,\dots, \alpha_n)$ denote the \textit{profile vector} of $\cF$, where $\alpha_i=|\{F \in \cF: |F|=i\}|$. Many problems in extremal finite set theory ask for the largest size of a family in a class $\mathbb{A}\subseteq 2^{2^{[n]}}$. This question is equivalent to determining $\max_{\cF\in \mathbb{A}}\alpha(\cF)\cdot \mathbf{1}$, where $\mathbf{1}$ is the vector of length $n+1$ with all entries being 1, and $\cdot$ denotes the scalar product. 

More generally, consider a weight function $w: \{0,1, \dots, n\} \rightarrow \mathbb{R}$, and assume we want to maximize $w(\cF):=\sum_{F\in \cF}w(|F|)$. Then this is equivalent to maximizing $\alpha(\cF)\cdot \mathbf{w}$, where $\mathbf{w}=(w(0),w(1),\dots,w(n))$.
As $\mathbb{A}\subseteq 2^{2^{[n]}}$ holds, we have $\{\alpha(\cF):\cF\in\mathbb{A}\}\subseteq \mathbb{R}^{n+1}$ and therefore we can consider its convex hull $\mu(\mathbb{A})$ that we call the \textit{profile polytope} of $\mathbb{A}$. 
It is well known that any weight function with the above property is maximized by an \textit{extreme point} of $\mu(\mathbb{A})$ (a point that is not a convex combination of other points of $\mu(\mathbb{A})$), moreover if such a weight function is non-negative, then it is maximized by an \textit{essential extreme point}, i.e. an extreme point which is maximal with respect to the coordinate-wise ordering. First results concerning profile polytopes were obtained in \cite{K2, EFK, EFK2,EFK3,FK} and the not too recent monograph of Engel \cite{En} contains a chapter devoted to this topic.


Using this we can determine $La(n,P_3,P_2)$, and using induction with this as the base case one can determine $La(n,P_{k+1},P_k)$, but in other cases we will need a more powerful tool than ordinary profile vectors. The notion of \textit{$l$-chain profile vector} $\alpha_l(\cF)$ of a family $\cF\subseteq 2^{[n]}$ was introduced by Gerbner and Patk\'os \cite{GP} and denotes a vector of length $\binom{n+1}{l}$. The coordinates are indexed by $l$-tuples of $[0,n]$ and $\alpha_l(\cF)(i_1,i_2,\dots,i_l)$ is the number of chains $F_1\subsetneq F_2\subsetneq \dots \subsetneq F_l$ such that $F_j\in \cF$ and $|F_j|=i_j$ for all $1\le j\le l$.
For a set $\mathbb{A}\subseteq 2^{2^{[n]}}$ one can define the $l$-chain profile polytope $\mu_l(\mathbb{A})$, its extreme points and essential extreme points analogously to the above. Note that for $l=1$ we get back the definition of the original profile polytope.

Let $\mathbb{S}_{n,k}$ be the class of all $k$-Sperner families on $[n]$.

\begin{lemma}[Gerbner, Patk\'os, \cite{GP}]
The essential extreme points of $\mu_l(\mathbb{S}_{n,k})$ are the $l$-chain vectors of $k$-Sperner families that consist of the union of $k$ full levels.
\end{lemma}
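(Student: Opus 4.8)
The plan is to characterize the essential extreme points by proving two inclusions, both of which reduce to a single computation with uniformly random maximal chains. Fix $n$, $k$ (we may assume $k\le n+1$, as otherwise $\mathbb{S}_{n,k}=2^{2^{[n]}}$ and the statement degenerates), and $l\le k$. For a maximal chain $\cC\colon \emptyset=C_0\subsetneq C_1\subsetneq\dots\subsetneq C_n=[n]$ chosen uniformly among the $n!$ maximal chains of $2^{[n]}$, and for an $l$-tuple $\mathbf{s}=(s_1<s_2<\dots<s_l)$ of elements of $[0,n]$, write $\pi(\mathbf{s})=\frac{s_1!(s_2-s_1)!\cdots(s_l-s_{l-1})!(n-s_l)!}{n!}$ for the probability that $\cC$ contains a fixed chain of sets of sizes $s_1,\dots,s_l$. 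First I would record the identity, valid for every $\cF\subseteq 2^{[n]}$,
\[
\alpha_l(\cF)(\mathbf{s})\,\pi(\mathbf{s})\;=\;\Pr\bigl[\,C_{s_1},C_{s_2},\dots,C_{s_l}\in\cF\,\bigr],
\]
which holds because $\cC$ meets each level in exactly one set, so the number of $l$-chains of $\cF$ of sizes $\mathbf{s}$ contained in $\cC$ is precisely the indicator of the event on the right; in particular $\alpha_l(\cF)(\mathbf{s})\le 1/\pi(\mathbf{s})$ always. Note also that if $\cG=\bigcup_{a=1}^k\binom{[n]}{j_a}$ is the union of $k$ full levels with $j_1<\dots<j_k$, then a direct count gives $\alpha_l(\cG)(\mathbf{s})=1/\pi(\mathbf{s})$ when $\{s_1,\dots,s_l\}\subseteq\{j_1,\dots,j_k\}$ and $\alpha_l(\cG)(\mathbf{s})=0$ otherwise; call this vector $v_{\mathbf{j}}$.

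For the inclusion that every essential extreme point is some $v_{\mathbf{j}}$, the key step is to dominate $\alpha_l(\cF)$ for an arbitrary $k$-Sperner family $\cF$ by a convex combination of the $v_{\mathbf{j}}$, and here the decisive move is a padding trick. For the random chain $\cC$ above, $\cF\cap\cC$ is a chain of at most $k$ sets (since $\cF$ is $k$-Sperner), so its size-set $\sigma(\cC)=\{|F|:F\in\cF\cap\cC\}$ has at most $k$ elements; extend it by a fixed deterministic rule to a $k$-element set $\widehat\sigma(\cC)\subseteq[0,n]$, and set $\lambda_{\mathbf{j}}=\Pr[\widehat\sigma(\cC)=\mathbf{j}]$, so that $\sum_{\mathbf{j}}\lambda_{\mathbf{j}}=1$. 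Because the event $\{C_{s_a}\in\cF\text{ for all }a\}$ is exactly $\{\mathbf{s}\subseteq\sigma(\cC)\}\subseteq\{\mathbf{s}\subseteq\widehat\sigma(\cC)\}$, the identity above yields
\[
\alpha_l(\cF)(\mathbf{s})=\frac{\Pr[\mathbf{s}\subseteq\sigma(\cC)]}{\pi(\mathbf{s})}\le\frac{\Pr[\mathbf{s}\subseteq\widehat\sigma(\cC)]}{\pi(\mathbf{s})}=\frac{1}{\pi(\mathbf{s})}\sum_{\mathbf{j}\supseteq\mathbf{s}}\lambda_{\mathbf{j}}=\sum_{\mathbf{j}}\lambda_{\mathbf{j}}v_{\mathbf{j}}(\mathbf{s}).
\]
Thus every point of $\mu_l(\mathbb{S}_{n,k})$ lies coordinate-wise below a convex combination of the $v_{\mathbf{j}}$; hence a maximal point of the polytope must equal such a convex combination, and if it is moreover an extreme point it must equal a single $v_{\mathbf{j}}$.

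For the reverse inclusion I would verify that each $v_{\mathbf{j}}$ really is an essential extreme point. Maximality: if $w\ge v_{\mathbf{j}}$ in $\mu_l(\mathbb{S}_{n,k})$, then by the previous paragraph $w\le\sum_{\mathbf{j}'}\lambda_{\mathbf{j}'}v_{\mathbf{j}'}$, and inspecting the coordinates $\mathbf{s}\subseteq\mathbf{j}$, where $v_{\mathbf{j}}$ already attains the global coordinate maximum $1/\pi(\mathbf{s})$, forces the $\lambda_{\mathbf{j}'}$ to be concentrated on $k$-sets containing every $l$-subset of $\mathbf{j}$, i.e.\ on $\mathbf{j}$ itself, so $w=v_{\mathbf{j}}$. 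Extremality: if $v_{\mathbf{j}}=\tfrac12(x+y)$ with $x,y\in\mu_l(\mathbb{S}_{n,k})$, then $x$ and $y$ vanish wherever $v_{\mathbf{j}}$ does, while on each coordinate $\mathbf{s}\subseteq\mathbf{j}$ both are at most $1/\pi(\mathbf{s})$ and average to it, so $x=y=v_{\mathbf{j}}$.

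I expect the genuine content to be concentrated in the padding step: the point to get right is that the crude bound $|\cF\cap\cC|\le k$, after completing $\sigma(\cC)$ to a set of size exactly $k$, is precisely a probability distribution over the target vectors $v_{\mathbf{j}}$, which converts the chain condition into a \emph{convex} domination rather than merely a single LYM-type inequality. Everything else is routine bookkeeping, together with the small edge cases $l=k$ (where $\mathbf{j}$ has a unique $l$-subset), $l>k$ (where $\mu_l(\mathbb{S}_{n,k})=\{0\}$), and $k>n+1$.
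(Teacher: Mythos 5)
Your argument is correct, and I can only note that the paper itself gives no proof of this lemma --- it is quoted from \cite{GP}, where the underlying mechanism is the same maximal-chain (LYM-type) counting you use; your ``padding'' of $\sigma(\cC)$ to an exactly $k$-element set is a clean way to package that count as a convex domination $\alpha_l(\cF)\le\sum_{\mathbf{j}}\lambda_{\mathbf{j}}v_{\mathbf{j}}$, which immediately yields both the characterization of the essential extreme points and the form of \cref{lchain} actually used in the paper. The only point worth flagging is definitional: you read ``maximal with respect to the coordinate-wise ordering'' as maximal among all points of the polytope, and your proof is complete under that reading (and this reading is consistent with the paper's use of essential extreme points to maximize non-negative weight functions); under the alternative reading ``maximal among the extreme points'' one would additionally have to argue that an extreme point distinct from every $v_{\mathbf{j}}$ is dominated by a single $v_{\mathbf{j}}$ rather than merely by a convex combination of them.
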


Let us state the immediate consequence of the above lemma that we will use in our proofs in the remainder of this section.

\begin{corollary}
\label{cor:lchain}
Let $l\le k$ and $w: \binom{2^{[n]}}{l} \rightarrow \mathbb{R}^+$ be a weight function such that $w(\{F_1,F_2,\dots F_l\})$ depends only on $|F_1|,|F_2|,\dots,|F_l|$. Then the maximum of $$\sum_{F_1 \subsetneq F_2 \subsetneq \dots \subsetneq F_l, F_i \in \cF}w(\{F_1,F_2,\dots,F_l\})$$ over all families $\cF \in \mathbb{S}_{n,k}$ is attained at some family that consists of $k$ full levels.
\end{corollary}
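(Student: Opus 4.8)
The plan is to observe that the quantity to be maximized is a non-negative linear functional of the $l$-chain profile vector $\alpha_l(\cF)$, and then to invoke the preceding lemma (Gerbner--Patk\'os) together with the general profile-polytope principle recalled at the beginning of this section.

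First I would encode $w$ as a vector. Since $w(\{F_1,\dots,F_l\})$ depends only on the cardinalities, for each $l$-tuple $i_1<i_2<\dots<i_l$ in $[0,n]$ let $w(i_1,\dots,i_l)$ denote the common value of $w$ on all chains of these sizes, and let $\mathbf w\in\mathbb R^{\binom{n+1}{l}}$ be the vector with these coordinates. By the definition of the $l$-chain profile vector,
$$\sum_{F_1\subsetneq\dots\subsetneq F_l,\ F_i\in\cF}w(\{F_1,\dots,F_l\})=\alpha_l(\cF)\cdot\mathbf w,$$
and since $w$ takes values in $\mathbb R^+$, every coordinate of $\mathbf w$ is non-negative.

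Next, because $\mu_l(\mathbb S_{n,k})$ is the convex hull of the finite set $\{\alpha_l(\cF):\cF\in\mathbb S_{n,k}\}$, maximizing the linear functional $x\mapsto x\cdot\mathbf w$ over $\mathbb S_{n,k}$ is the same as maximizing it over $\mu_l(\mathbb S_{n,k})$; hence the maximum is attained at an extreme point, and since $\mathbf w\ge 0$ coordinatewise it is in fact attained at an essential extreme point $x^\ast$ of $\mu_l(\mathbb S_{n,k})$. By the preceding lemma, $x^\ast=\alpha_l(\cF^\ast)$ for some $k$-Sperner family $\cF^\ast$ that is a union of $k$ full levels; here the hypothesis $l\le k$ is what makes the relevant $l$-chains, and this class of families, available. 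Therefore $\alpha_l(\cF^\ast)\cdot\mathbf w$ realizes the maximum, which is exactly the assertion.

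I do not expect a genuine obstacle: the substance of the statement is entirely contained in the preceding lemma describing the essential extreme points of $\mu_l(\mathbb S_{n,k})$, and in the standard fact that a non-negative linear functional on a profile polytope is maximized at an essential extreme point. The only point that needs checking, and it is immediate, is that the objective really is a linear functional of $\alpha_l(\cF)$ with non-negative coefficients, and this is precisely what the two hypotheses on $w$ --- that it depends only on the set sizes and that it is $\mathbb R^+$-valued --- provide.
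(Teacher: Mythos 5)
Your proposal is correct and follows exactly the route the paper intends: the paper presents this corollary as an immediate consequence of the Gerbner--Patk\'os lemma via the standard profile-polytope principle (a non-negative linear functional of $\alpha_l(\cF)$ is maximized at an essential extreme point, which the lemma identifies as the $l$-chain profile vector of a union of $k$ full levels). There is nothing to add; your spelled-out version matches the argument the paper leaves implicit.
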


\begin{proof}[Proof of \tref{profile}.]
To prove \textbf{(a)} we show $La(n,P_3,\bigwedge_r)=\binom{n}{i_r}\binom{\binom{i_r}{\lfloor i_r/2\rfloor}}{r}$ as the other statement follows by symmetry. Let us consider the canonical partition of a $P_3$-free family $\cF$. Note that a copy of $\bigwedge_r$ contains exactly one element $F$ from $\cF_2$ and $r$ elements $F_1,F_2,\dots, F_r \in \cF_1$ with $F_i\subsetneq F$ for all $1,2,\dots,r$. Let us consider a set $F \in \cF_2$. The sets of $\cF_1$ contained in $F$ form an antichain, thus by \tref{sperner}, their number is at most $\binom{|F|}{\lfloor |F|/2\rfloor}$. Therefore, the number of copies of $\bigwedge_r$ that contain $F$ is at most $\binom{\binom{|F|}{\lfloor |F|/2\rfloor}}{r}$ and we obtain
\[
c(\bigwedge_r,\cF)\le \sum_{F\in \cF_2}\binom{\binom{|F|}{\lfloor |F|/2\rfloor}}{r} \le \max_{\cA \in \mathbb{S}_{n,1}}\sum_{A \in \cA}\binom{\binom{|A|}{\lfloor |A|/2\rfloor}}{r}.
\]
Therefore if we set $w(i):={{i\choose \lfloor i/2\rfloor} \choose r }$, then we can apply \cref{lchain} with $l=k=1$ to obtain $c(\bigwedge_r,\cF) \le \max_{0 \le i \le n} {n \choose i}w(i)$. On the other hand, the families $\cF(i)=\binom{[n]}{i}\cup \binom{[n]}{\lfloor i/2\rfloor}$ are $P_3$-free and $c(\bigwedge_r,\cF(i))=\binom{n}{i}w(i)$ showing $La(n,P_3,\bigwedge_r)=\max_{0 \le i \le n} {n \choose i}w(i)$.
To obtain the value of $i_r$ we need to maximize $f(i):=\binom{n}{i}w(i)$. Considering
\[
\frac{f(i)}{f(i+1)}=\frac{i+1}{n-i}\cdot \frac{\prod_{j=0}^{r-1}\left(\binom{i}{\lfloor i/2\rfloor}-j\right)}{\prod_{j=0}^{r-1}\left(\binom{i+1}{\lfloor(i+1)/2\rfloor}-j\right)}=(1+o(1))\frac{i+1}{2^r(n-i)}
\]
when $i$ tends to infinity with $n$. For constant values of $i$, the ratio $f(i)/f(i+1)$ is easily seen to be smaller than 1 (if $n$ is big enough), therefore the maximum of $f(i)$ is attained at $i_r=(1+o(1))\frac{2^r}{2^r+1}n$ as stated.

To prove \textbf{(b)} we consider the canonical partition of a $P_4$-free family $\cF\subseteq 2^{[n]}$. Any copy of $D_r$ in $\cF$ must contain one set from $\cF_1,\cF_3$ each and $r$ sets from $\cF_2$. For any $F_1\in \cF_1,F_3\in \cF_3$ with $F_1 \subset F_3$, the number of copies of $D_r$ containing $F_1$ and $F_3$ is $\binom{m}{r}$, where $m=|\cM_{F_1,F_3}|$ with $\cM_{F_1,F_3}=\{F\in \cF_2: F_1 \subset F\subset F_3\}$. As $\cM'_{F_1,F_3}=\{M\setminus F_1:M\in \cM_{F_1,F_3}\}$ is on antichain in $F_3\setminus F_1$, we have $m\le \binom{|F_3|-|F_1|}{\lfloor (|F_3|-|F_1|)/2\rfloor}$. Therefore, we obtain
$$c(D_r,\cF)\le\sum_{F_1\in \cF_1,F_3\in \cF_3, F_1\subset F_3}\binom{\binom{|F_3|-|F_1|}{\lfloor (|F_3|-|F_1|)/2\rfloor}}{r}\le \max_{0\le i<j\le n}\binom{n}{j}\binom{j}{i}\binom{\binom{j-i}{\lfloor (j-i)/2\rfloor}}{r}$$
where to obtain the last inequality we applied \cref{lchain} with $l=k=2$ and $w(i,j)=\binom{\binom{j-i}{\lfloor (j-i)/2\rfloor}}{r}$. Observe that if $i_r$ and $j_r$ are the values for which this maximum is taken, then for the family $\cF=\binom{[n]}{i_r}\cup \binom{[n]}{j_r}\cup \binom{[n]}{\lfloor (i_r+j_r)/2\rfloor}$ we have $c(D_r,\cF )=\binom{n}{j_r}\binom{j_r}{i_r}\binom{\binom{j_r-i_r}{\lfloor (j_r-i_r)/2\rfloor}}{r}$. 

To obtain the value of $i_r$ and $j_r$ let us fix $x=j-i$ first. Note that $\binom{n}{j}\binom{j}{i}=\binom{n}{x}\binom{n-x}{i}$, so we have
\[
\frac{\binom{n}{i+1+x}\binom{i+1+x}{i+1}\binom{\binom{x}{\lfloor x/2\rfloor}}{r}}{\binom{n}{i+x}\binom{i+x}{i}\binom{\binom{x}{\lfloor x/2\rfloor}}{r}}=\frac{\binom{n}{x}\binom{n-x}{i+1}\binom{\binom{x}{\lfloor x/2\rfloor}}{r}}{\binom{n}{x}\binom{n-x}{i}\binom{\binom{x}{\lfloor x/2\rfloor}}{r}}=\frac{n-x-i}{i+1},
\]
which implies that $i_r+j_r=n$ or $i_r+j_r=n-1$ holds. 

Let $g(i)=\binom{n}{i}\binom{n-i}{i}\binom{\binom{n-i}{\lfloor (n-i)/2\rfloor}}{r}$, then $$\frac{g(i+1)}{g(i)}=(1+o(1))\frac{(n-2i-1)(n-2i-2)}{(i+1)^22^{2r}}.$$

This implies the maximum of $g(i)$ is attained at $i=(1+o(1))\frac{1}{\cdot 2^r+2}$ .

To prove \textbf{(c)} we again consider the canonical partition of a $P_3$-free family $\cF\subseteq 2^{[n]}$. A copy of $N$ in $\cF$ must contain two sets from $\cF_1$ and two from $\cF_2$. Let $a,b,c,d$ be the four elements of $N$ with $a \le c$ and $b \le c,d$. For every copy of $N$ in $\cF$ there is a bijection $\phi$ from $N$ to that copy. Then we count the copies of $N$ in $\cF$ according to the images $\phi(b),\phi(c)$. Clearly, they form a 2-chain in $\cF$, the possible images of $d$ form an antichain among those sets of $\cF_2$ that contain $\phi(b)$ and the possible images of $a$ form an antichain among those sets of $\cF_1$ that are contained by $\phi(c)$. Therefore, we obtain
\[
c(N,\cF)\le \sum_{F_1,F_2\in \cF,F_1\subset F_2}\binom{n-|F_1|}{\lfloor \frac{n-|F_1|}{2}\rfloor}\binom{|F_2|}{\lfloor \frac{|F_2|}{2}\rfloor}\le \max_{0\le i<j\le n}\binom{n}{j}\binom{j}{i}\binom{n-i}{\lfloor \frac{n-i}{2}\rfloor}\binom{j}{\lfloor \frac{j}{2}\rfloor},
\]
where to obtain the last inequality we applied \cref{lchain} with $l=k=2$ and $w(i,j)=\binom{n-i}{\lfloor \frac{n-i}{2}\rfloor}\binom{j}{\lfloor \frac{j}{2}\rfloor}$. We have $\binom{n}{j}\binom{j}{i}=\binom{n}{j-i}\binom{n-j+i}{i}$, thus we get $$c(N,\cF)\le\max_{0\le i<j\le n}\binom{n}{j-i}\binom{n-j+i}{i}\binom{n-i}{\lfloor \frac{n-i}{2}\rfloor}\binom{j}{\lfloor \frac{j}{2}\rfloor}=\max_{0\le i<j\le n}o(2^{n+n-j+i+n-i+j})=o(2^{3n}).$$

Note that $j=\lceil 3n/4\rceil $ and $i=\lceil n/4\rceil$ show the exponent cannot be improved with this method.


To obtain the lower bound consider the $P_3$-free families $\cF_{i,j}=\binom{[n]}{i}\cup \binom{[n]}{j}$ with $0 \le i<j\le n$. Observe that we have $c(N,\cF_{i,j})\ge \frac{1}{4}\binom{n}{j}\binom{j}{i}^2\binom{n-i}{j-i}=:g(i,j)$ (the $1/4$-factor is due to the fact that copies of $B$ are counted 4 times as copies of $N$). To maximize $g(i,j)$ we first fix $j-i$ and consider $$\frac{g(i+1,j+1)}{g(i,j)}=\frac{(n-j)^2(j+1)^2}{(i+1)^2(j+1)(n-i)}.$$ It is easy to see that this fraction becomes smaller than 1 when $i$ is roughly $n-j+1$. Thus the maximum of $g(i,j)$ is asymptotically achieved when $i=n-j$.  

Similarly, we have
$$\frac{g(n-j-1,j+1)}{g(n-j,j)}=\frac{(n-j)(j+1)^3(n-j)^3}{(j+1)(2j-n+2)^3(2j-n+1)^3}.$$
Therefore, if we write $j=(c+o(1))n$, we obtain that $g$ is maximized when $\frac{c^2(1-c)^4}{(2c-1)^6}=1$. After taking the square root of the expression on the left hand side, this is  equivalent to that $0=7c^3-10c^2+5c-1$ holds. The solution of this equation is $c_0=0.69922..$. As $g(n-j,j)=\frac{1}{4}\binom{n}{j}\binom{j}{n-j}^3=\Omega(2^{h(j/n)+3\frac{j}{n}h((n-j)/j)}/n^2)$, the lower bound follows by plugging in $j=0.69922n$.
\end{proof}

\section{The $\otimes_r$ operation and copies of complete multi-level posets
}
In this section we prove results concerning the binary operation $Q_1\otimes_r Q_2$. We introduce two types of profile vectors.

For a family $\cF \subseteq 2^{[n]}$ of sets, let $\beta^r(\cF)=(\beta^r_0,\beta^r_1,\dots, \beta^r_{n-1})$ denote the \textit{$r$-intersection profile vector} of $\cF$, where $\beta^r_i=\beta^r_i(\cF)=|\{\{F_1,F_2,\dots,F_r\}: F_j \in \cF, \textrm{these are $r$ different sets and}\ $ $|\bigcap_{j=1}^r F_j|=i\}|$.

For a family $\cF \subseteq 2^{[n]}$ of sets, let $\gamma^r(\cF)=(\gamma_{0,1}^r,\gamma^r_{0,2},\dots, \gamma_{0,n}^r, \gamma_{1,2}^r, \dots, \gamma_{n-1,n}^r)$ denote the $r$-\textit{intersection-union profile vector} of $\cF$, where $\gamma_{i,j}^r=\gamma_{i,j}^r(\cF)=|\{\{F_1,\dots,F_r\}: F_1,\dots,F_r \in \cF, \textrm{these are $r$ different sets}, |F_1\cap \dots\cap F_r|=i, |F_1,\cup\dots\cup F_r|=j\}|$. Note that if $\cA\subseteq 2^{[n]}$ is an antichain, then $\gamma^r_{i,j}(\cA)>0$ implies $j-i\ge 2$, therefore the number of non-zero coordinates in $\gamma^r(\cA)$ is at most $\binom{n+1}{2}-n=\binom{n}{2}\le n^2$. 

Let us illustrate with two examples why these profile vectors can be useful in counting copies of different posets. Let $\cF$ be a $P_3$-free and $\cG$ be a $P_4$-free family. We will estimate $c(K_{p,r},\cF)$ and $c(K_{p,r,s},\cG)$. If we consider the canonical partitions of $\cF=\cF_1\cup \cF_2$ and $\cG=\cG_1\cup \cG_2\cup \cG_3$, then a copy of $K_{p,r}$ in $\cF$ contains $p$ sets from $\cF_1$ and $r$ sets from $\cF_2$. If we fix $F_1,\dots, F_r\in \cF_2$, then the $p$ "bottom" sets of the copies of $K_{p,r}$ in $\cF$ containing $F_1,\dots, F_r$ form an antichain in $\{F\in \cF_1: F\subseteq \bigcap_{j=1}^rF_j\}$. Therefore, by \tref{sperner}, the number of these copies is at most $\binom{\binom{|\bigcap_{j=1}^rF_j|}{\lfloor |\bigcap_{j=1}^rF_j|/2\rfloor}}{p}$, so summing up for all possible $r$-tuples of $\cF_2$ we obtain $c(K_{p,r},\cF)\le \beta^r(\cF)\cdot \mathbf{w}_p$ and consequently
\[
La(n,P_3,K_{p,r}) \le \max\{\beta^r(\cA)\cdot \mathbf{w}_p:\cA \subseteq 2^{[n]} ~\text{is an antichain}\},
\]
where the $j$th entry of the vector $\mathbf{w}_p$ is $\binom{\binom{j}{\lfloor j/2\rfloor}}{p}$. 

Similarly, if we consider the canonical partition of $\cG=\cG_1\cup \cG_2\cup \cG_3$, then a copy of $K_{p,r,s}$ in $\cG$ contains $p$ sets from $\cG_1$, $r$ sets from $\cG_2$ and $s$ sets from $\cG_3$. If we fix $G_1,\dots, G_r\in \cG_2$, then the bottom $p$ and top $s$ sets of copies of $K_{p,r,s}$ containing $G_1,\dots, G_r$ form antichains in $\{G\in \cG_1: G\subseteq \bigcap_{j=1}^rG_j\}$ and $\{G\in \cG_3:G\supseteq \bigcup_{j=1}^rG_j\}$. Therefore, using again \tref{sperner}, we obtain $c(K_{p,r,s},\cG)\le \gamma^r(\cG_2)\cdot \mathbf{w}_{p,s}$ and consequently
\[
La(n,P_4,K_{p,r,s}) \le \max\{\gamma^r(\cA)\cdot \mathbf{w}_{p,s}:\cA \subseteq 2^{[n]} ~\text{is an antichain}\},
\]
where the $(i,j)$th entry of the vector $\mathbf{w}_{p,s}$ is $\binom{\binom{i}{\lfloor i/2\rfloor}}{p}\binom{\binom{n-j}{\lfloor (n-j)/2\rfloor}}{s}$.
Therefore determining the convex hull (and more importantly its extreme points) of the $r$-intersection profile vectors and $r$-intersection-union profile vectors of antichains would yield upper bounds on $La$-functions of complete multi-level posets.

We are not able to determine these convex hulls, we will only obtain upper bounds on the coordinates of these profile vectors. Note that \tref{rtuples} is about special coordinates, so the next two theorems imply that result.


\begin{theorem}\label{thm:gamma} 
\textbf{(a)} If $\cF\subseteq 2^{[n]}$ is an antichain and $j-i$ is even, then $\gamma^2_{i,j}(\cF)\le \gamma^2_{i,j}(\binom{[n]}{(i+j)/2})=\frac{1}{2}\binom{n}{j}\binom{j}{i}\binom{j-i}{(j-i)/2}$. If $j-i$ is odd, then $\gamma^2_{i,j}(\cF)\le \binom{n}{j}\binom{j}{i}\binom{j-i-1}{(\lfloor j-i)/2\rfloor -1}$.

\textbf{(b)} If $\cF$ is an antichain and $r\ge 3$, then $\gamma^r_{i,j}(\cF)\le n^{2r}\gamma^r_{i,j}(\binom{[n]}{\lfloor (i+j)/2\rfloor})$.

\end{theorem}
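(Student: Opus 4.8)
The plan is to reduce the problem of bounding $\gamma^r_{i,j}(\cF)$ over antichains to a question about antichains in a "slab" $\binom{[n]}{*}$ restricted to sets lying between a fixed $i$-set and a fixed $j$-set, and then to apply a symmetrization/normalized-matching argument. First I would set up the following bookkeeping. For an $r$-tuple $\{F_1,\dots,F_r\}$ counted by $\gamma^r_{i,j}(\cF)$, write $I=\bigcap_{t=1}^r F_t$ and $U=\bigcup_{t=1}^r F_t$, so $|I|=i$, $|U|=j$, and each $F_t$ satisfies $I\subseteq F_t\subseteq U$. The sets $F_t\setminus I$ form an antichain inside $\binom{U\setminus I}{*}$ whose intersection is empty and whose union is $U\setminus I$; so after conditioning on the pair $(I,U)$ the count is governed by $\gamma^r_{0,m}(\cA')$ for an antichain $\cA'\subseteq 2^{[m]}$ with $m=j-i$, i.e. exactly the quantity in Theorem~\ref{thm:rtuples}\textbf{(a)}. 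The subtlety is that a single set $F\in\cF$ can play the role of $F_t$ in many different $r$-tuples with different $(I,U)$, so we cannot simply sum the per-$(I,U)$ bounds over all $\binom{n}{j}\binom{j}{i}$ choices of $(I,U)$ without overcounting — but for an \emph{upper} bound this overcounting is harmless, and that is precisely where the clean factor $\binom{n}{j}\binom{j}{i}$ comes from.

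For part \textbf{(a)}, with $r=2$: here $F_1\cap F_2$ and $F_1\cup F_2$ determine the pair $\{F_1,F_2\}$ up to which of the two "halves" of $U\setminus I$ each set takes, so the relevant quantity is $\gamma^2_{0,m}(\cA')$ for an antichain $\cA'$ in $2^{[m]}$ with $\bigcap=\emptyset$, $\bigcup=[m]$. By the $r=2$ case of Theorem~\ref{thm:rtuples}\textbf{(a)} (due to Bollob\'as, as noted), this is maximized by the middle level: $\gamma^2_{0,m}(\cA')\le \frac12\binom{m}{m/2}$ when $m$ is even and $\le\binom{m-1}{\lfloor m/2\rfloor-1}$ when $m$ is odd. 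Summing the bound $\gamma^2_{0,j-i}(\cA'_{I,U})$ over all pairs $I\subseteq U$ with $|I|=i$, $|U|=j$ — each $2$-tuple counted by $\gamma^2_{i,j}(\cF)$ gets counted exactly once, against its own $(I,U)$ — yields $\gamma^2_{i,j}(\cF)\le \binom{n}{j}\binom{j}{i}\cdot\frac12\binom{j-i}{(j-i)/2}$ in the even case and $\binom{n}{j}\binom{j}{i}\binom{j-i-1}{\lfloor(j-i)/2\rfloor-1}$ in the odd case, with equality for $\binom{[n]}{(i+j)/2}$ in the even case (where one checks directly that every $2$-tuple from that single level has intersection size $(i+j)/2-\binom{(j-i)/2}{?}$... — rather, one just verifies $\gamma^2_{i,j}\big(\binom{[n]}{(i+j)/2}\big)$ equals the claimed product by a direct count: choose $U$, choose $I\subseteq U$, then split $U\setminus I$ into two equal halves). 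For part \textbf{(b)}, $r\ge3$: the same conditioning on $(I,U)$ reduces matters to bounding $\gamma^r_{0,m}(\cA')$ for an antichain $\cA'\subseteq 2^{[m]}$, and Theorem~\ref{thm:rtuples}\textbf{(a)} gives $\gamma^r_{0,m}(\cA')\le m^{2r}\gamma^r_{0,m}\big(\binom{[m]}{\lfloor m/2\rfloor}\big)$. Summing over the $\binom{n}{j}\binom{j}{i}$ pairs $(I,U)$ and using $m\le n$ gives $\gamma^r_{i,j}(\cF)\le n^{2r}\binom{n}{j}\binom{j}{i}\gamma^r_{j-i}\big(\binom{[j-i]}{\lfloor(j-i)/2\rfloor}\big)_{0}= n^{2r}\gamma^r_{i,j}\big(\binom{[n]}{\lfloor(i+j)/2\rfloor}\big)$, where the last equality is again the direct count of intersection-union profiles of a single level.

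So modulo Theorem~\ref{thm:rtuples}\textbf{(a)}, the argument here is bookkeeping: (1) show that conditioning on $(I,U)=(\bigcap F_t,\bigcup F_t)$ partitions the $r$-tuples counted by $\gamma^r_{i,j}(\cF)$; (2) observe that within each class the $F_t\setminus I$ form an antichain in $2^{[j-i]}$ with empty intersection and full union; (3) invoke Theorem~\ref{thm:rtuples}\textbf{(a)}; (4) sum and match against the single-level count. The one genuine point to be careful about — and what I expect to be the main obstacle — is the \emph{sharpness} claim in the even $r=2$ case: one must verify that $\binom{[n]}{(i+j)/2}$ actually attains $\frac12\binom{n}{j}\binom{j}{i}\binom{j-i}{(j-i)/2}$, i.e. that for the mid-level every pair $F_1,F_2$ with $|F_1\cap F_2|=i$ and $|F_1\cup F_2|=j$ is counted, and that conversely the middle level is \emph{optimal} among antichains — the latter being exactly Bollob\'as's theorem in disguise, applied slab-by-slab, so no new idea is needed, only care that the slab-wise optima assemble into a single-level optimum (which they do because $(i+j)/2$ is the same "middle" choice for every $(I,U)$ with the same $j-i$, but $j-i$ itself varies with $(i,j)$ — here $i,j$ are \emph{fixed}, so there is no conflict). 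The only other place requiring attention is confirming that the per-$(I,U)$ estimates sum \emph{without undercounting}, i.e. every valid $r$-tuple really does arise from exactly one pair $(I,U)$; this is immediate since $I$ and $U$ are defined as the intersection and union of the tuple itself.
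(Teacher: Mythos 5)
Your part \textbf{(a)} matches the paper's argument: decompose $\gamma^2_{i,j}(\cF)$ as $\sum_{I,J}\gamma^2_{0,j-i}(\cF_{I,J})$ over pairs $I\subset J$ with $|I|=i$, $|J|=j$ (each pair $\{F_1,F_2\}$ lands in exactly one summand, namely the one indexed by its own intersection and union), bound each summand by Sperner's theorem in the even case and by Bollob\'as's result in the odd case, and match the even-case total against the direct count for the single level $\binom{[n]}{(i+j)/2}$. That part is fine.

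Part \textbf{(b)} has a genuine gap: it is circular. You invoke Theorem~\ref{thm:rtuples}\textbf{(a)} to supply the bound $\gamma^r_{0,m}(\cA')\le m^{2r}\gamma^r_{0,m}\bigl(\binom{[m]}{\lfloor m/2\rfloor}\bigr)$ for $r\ge 3$, but in this paper Theorem~\ref{thm:rtuples} is not an independent input --- it is stated as the special coordinate case $(i,j)=(0,n)$ of Theorems~\ref{thm:gamma} and~\ref{thm:intuni} (the text immediately preceding Theorem~\ref{thm:gamma} says exactly this), and only its $r=2$ part is attributed to Bollob\'as. So for $r\ge 3$ you are assuming precisely the statement you are supposed to prove. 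Your slab reduction correctly shows that the general $(i,j)$ case follows from the $(0,n)$ case, but the $(0,n)$ case for $r\ge 3$ is where all the work lies, and none of it appears in your proposal. The paper proves it by induction on $r$: fix $r-1$ sets $F_1,\dots,F_{r-1}$ with intersection $A$ and union $B$, observe that any admissible $F_r$ must contain $\overline{B}$ and be disjoint from $A$, so after deleting $\overline{B}$ these candidates form a Sperner family on $b-a$ points; this gives $r\gamma^r_{0,n}(\cF)\le\sum_{a<b}\gamma^{r-1}_{a,b}(\cF)\binom{b-a}{\lfloor(b-a)/2\rfloor}$, which is then compared term-by-term against a matching lower bound for $\gamma^r_{0,n}\bigl(\binom{[n]}{\lfloor n/2\rfloor}\bigr)$. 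Moreover the base case $r=3$ needs separate care because when the optimizing pair $(a',b')$ has $b'-a'$ odd, $\gamma^2_{a',b'}$ of a single level vanishes and one must route through the odd-parity bound of part \textbf{(a)} and a level of size $(a^*+b^*-1)/2$, at the cost of an extra factor of $n$. None of this is "bookkeeping," and your proposal does not contain a substitute for it.
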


\noindent During the proof we will use several times that the number of pairs $A\subset B\subset [n]$ with $|A|=a,|B|=b$ is $\binom{n}{b}\binom{b}{a}=\binom{n}{b-a}\binom{n-(b-a)}{a}$. The first calculation is obvious, for the second pick first $B\setminus A$ from $[n]$ and then $A$ from $[n]\setminus (B\setminus A)$.

\begin{proof} To see \textbf{(a)}, we first consider the special case $i=0,j=n$. Observe that $\gamma^2_{0,n}(\cF)$ is the number of complement pairs in $\cF$. In an antichain, by \tref{sperner}, this is at most $|\cF|/2\le \binom{n}{\lfloor n/2\rfloor}/2$. If $n$ is even, then this is achieved when $\cF=\binom{[n]}{n/2}$, while the case of odd $n$ was solved by Bollob\'as \cite{B}, who showed that the number of such pairs is at most $\binom{n-1}{\lfloor n/2\rfloor -1}$ and this is sharp as shown by $\{F\in \binom{[n]}{\lfloor n/2\rfloor}:1\in F\}\cup\{F\in \binom{[n]}{\lceil n/2\rceil}:1\notin F\}$.

To see the general statement observe that for pair $I\subset J$ writing $\cF_{I,J}=\{F\in \cF: I\subseteq F\subseteq J\}$ we have $\gamma_{i,j}^2(\cF)=\sum_{I\in \binom{[n]}{i},J\in \binom{[n]}{j}}\gamma^2_{0,j-i}(\cF_{I,J})$. Therefore if $j-i$ is even we obtain $\gamma^2_{i,j}(\cF)\le \binom{n}{j}\binom{j}{i}\gamma_{0,j-i}^2(\binom{[j-i]}{(j-i)/2})=\gamma^2_{i,j}(\binom{[n]}{(j+i)/2})$, while if $j-i$ is odd, we obtain $\gamma^2_{i,j}(\cF)\le \binom{n}{j}\binom{j}{i}\binom{j-i-1}{(\lfloor j-i)/2\rfloor -1}$.
\vskip 0.2truecm
To show \textbf{(b)} it is enough to prove the statement fo $i=0,j=n$. Indeed, $\gamma^r_{i,j}(\cF)=\sum_{I,J}\gamma^r_{0,j-i}(\cF_{I,J})\le \binom{n}{j}\binom{j}{i}\gamma^r_{0,j-i}(\binom{[j-i]}{\lfloor (j-i)/2\rfloor}=\gamma^r_{i,j}\binom{[n]}{\lfloor (j+i)/2\rfloor}$.

We proceed by induction on $r$. We postpone the proof of the base case $r=3$, and assume the statement holds for $r-1$ and any $i<j$. Let us consider $r-1$ sets $F_1,\dots,F_{r-1}$ of $\cF$ and examine which sets can be added to them as $F_r$ to get empty intersection and $[n]$ as the union. Let $\cF'$ be the family of those sets. Let $A=\cap_{l=1}^{r-1} F_l$ and $B=\cup_{l=1}^{r-1} F_l$ with $a=|A|$ and $b=|B|$. Then members of $\cF'$ contain the complement of $B$ and do not intersect $A$, and $\cF'$ is Sperner. If we remove $\overline{B}$ from them, the resulting family is Sperner on an underlying set of size $b-a$, thus have cardinality at most $\binom{b-a}{\lfloor (b-a)/2\rfloor}=:w(a,b)$. Note that we count every $r$-tuple $F_1,\dots,F_r$ exactly $r$ times. It implies 
$$r\gamma^r_{0,n}(\cF)\le \sum_{a<b}\gamma^{r-1}_{a,b}(\cF)w(a,b)=\gamma^{r-1}(\cF)\cdot \mathbf{w}\le n^2\max_{a<b}\gamma^{r-1}_{a,b}(\cF)w(a,b)$$. 

By induction this is at most $n^2n^{2r-2}\max_{a<b}\gamma^{r-1}_{a,b}(\binom{[n]}{\lfloor (a+b)/2\rfloor})w(a,b)$. Let 
\begin{equation*}
\begin{split}
f(a,b) = & \gamma^{r-1}_{a,b}\left(\binom{[n]}{\lfloor (a+b)/2\rfloor}\right)w(a,b)\\
= & \binom{n}{b-a}\binom{n-(b-a)}{a}\gamma_{0,b-a}^{r-1}\left(\binom{ [b-a]}{\lfloor (a+b)/2\rfloor-a}\right)\binom{b-a}{\lfloor (b-a)/2\rfloor}.
\end{split}
\end{equation*}
If we fix $b-a$  and consider $\frac{f(a,b)}{f(a+1,b+1)}=\frac{a+1}{n-(b-a)-a}$, we can see that the maximum is taken when $b+a=n$ or $b+a=n-1$ depending on the parity of $b-a$ and $n$. 

Let $a^*,b^*$ be the values for which the above maximum is taken. Note that for any $a^*<p<b^*$ we have $r\gamma^r_{0,n}(\binom{[n]}{p})\ge \binom{n}{b^*}\binom{b^*}{a^*}\gamma_{0,b^*-a^*}^{r-1}(\binom{ [b^*-a^*]}{p-a^*})\binom{b^*-a^*}{p-n+b^*}$, by counting only those $r$-tuples where the first $r-1$ sets have intersection of size $a^*$ and union of size $b^*$. (This way we count those $r$-tuples at most $r$ times). This is exactly $f(a^*,b^*)$ if $p=\lfloor n/2\rfloor=\lfloor (a^*+b^*)/2\rfloor$, so we obtained $r\gamma^r_{0,n}(\cF)\le n^{2r}r\gamma^r_{0,n}(\binom{[n]}{\lfloor n/2\rfloor})$ as required.

For $r=3$ we similarly consider two members of $\cF$ and examine which sets can be added to them to get empty intersection and $[n]$ as the union. This leads to 
$$3\gamma_{0,n}^3(\cF)\le n^2\max_{a<b}\gamma^2_{a,b}(\cF)w(a,b).$$

Note that if the maximum is taken at $a'$ and $b'$ with $b'-a'$ even, then 
part \textbf{(a)} of the theorem gives $\gamma^2_{a',b'}(\cF)\le \gamma^2_{a',b'}(\binom{[n]}{(b'+a')/2})$ so $3\gamma^3_{0,n}(\cF)\le n^2 \gamma^2_{a',b'}(\binom{[n]}{(b'+a')/2})w(a',b')$. This essentially lets us use $r=2$ as the base case of induction, and finish the proof of this case similarly to the induction step above.

Let us choose $a^*,b^*$ that maximizes this upper bound with $b^*-a^*=b'-a'$. Similarly to the computation about $f(a,b)$ , we have $a^*+b^*=n$ or $n-1$ depending on the parity of $n$. Then we obtain 
$3\gamma^3_{0,n}(\cF)\le n^2 \gamma^2_{a^*,b^*}(\binom{[n]}{(b^*+a^*)/2})w(a^*,b^*)\le n^2 \binom{n}{b^*}\binom{b^*}{a^*}\gamma^2_{0,b^*-a^*}(\binom{[b^*-a^*]}{(b^*+a^*)/2})w(a^*,b^*)$. The lower bound on $3\gamma^3_{0,n}(\binom{[n]}{\lfloor n/2\rfloor})$ is $\binom{n}{b^*}\binom{b^*}{a^*}\gamma_{0,b^*-a^*}^2(\binom{ [b^*-a^*]}{p-a^*})\binom{b^*-a^*}{p-a^*}$
as in the inductive step.

However, if $b'-a'$ is odd, then $\gamma^2_{a',b'}(\binom{[n]}{\lfloor (a'+b')/2\rfloor})=0$. But we know by part \textbf{(a)}
$$\gamma^2_{a',b'}(\cF)w(a',b')\le \binom{b'-a'-1}{(b'-a'-1)/2-1}\binom{n}{b'}\binom{b'}{a'}w(a',b').$$
Similarly to the previous cases, if $b'-a'$ is fixed, then the maximum of the right hand side is taken for some $a^*,b^*$ with $b^*-a^*=b'-a'$ and $a'+b'=n$ if $n$ is odd, and  $a'+b'=n-1$ or $a'+b'=n+1$ if $n$ is even. Thus we can assume $\lfloor n/2 \rfloor= (a^*+b^*-1)/2$.
On the other hand, since $b^*-a^*$ is odd, we have $$3\gamma_{0,n}^3\left(\binom{[n]}{ (a^*+b^*-1)/2}\right)\ge \binom{n}{b^*-1}\binom{b^*-1}{a^*}\frac{1}{2}\binom{b^*-a^*-1}{(b^*-a^*-1)/2}\binom{b^*-1-a^*}{\frac{a^*+b^*-1}{2}-n+b^*-1},$$ 
by counting only those triples where two of the sets have intersection of size $a^*$ and union of size $b^*-1$. We can pick first the $(b^*-1)$-set $B$ and the $a^*$-set $A$ in $\binom{n}{b^*-1}\binom{b^*-1}{a^*}$ ways, then among $\{G \in \binom{[n]}{\lfloor (a^*+b^*)/2\rfloor}:A\subset G\subset B\}$ we can pick a pair $G_1,G_2$ with $G_1\cap G_2=A$, $G_1\cup G_2=B$ in $\binom{b^*-a^*-1}{(b^*-a^*-1)/2}/2$ ways and then the third set contains the complement of $B$ and does not intersect $A$. Using that $\binom{b^*-a^*-1}{\frac{a^*+b^*-1}{2}-n+b^*-1}\ge\binom{b^*-a^*-1}{(b^*-a^*-1)/2-1}$
this implies 
$$
3\gamma_{0,n}^3(\cF)\le 3n^2\gamma_{0,n}^3\left(\binom{[n]}{(a^*+b^*-1)/2}\right)\frac{n-b^*+1}{(b^*-a^*-1)/2}\le 3n^3\gamma_{0,n}^3\left(\binom{[n]}{\lfloor n/2\rfloor}\right),$$
as $b^*\ge n/2$.
\end{proof}

\begin{theorem}\label{thm:intuni}
\textbf{(a)} For any antichain $\cF\subseteq 2^{[n]}$ we have $\beta^2_i(\cF)\le \beta^2_i(\binom{[n]}{j(i)})$, where $j(i)=i+\lfloor (n-i)/3\rfloor$ if $n-i\equiv 0,1$ mod 3 and $j(i)=i+\lceil (n-i)/3\rceil$ if $n-i\equiv 2$ mod 3.

\textbf{(b)} For every $r\ge 3$ and $i\le n$ there exists  $j(r,i,n)$ such that $\beta^r_{i}(\cF)\le n^{2r+1}\beta^r_{i}(\binom{[n]}{j(r,i,n)})$ holds for any antichain $\cF\subseteq 2^{[n]}$.
\end{theorem}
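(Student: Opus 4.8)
The plan is to reduce both statements to the case $i=0$, treat $r=2$ by a permutation‑averaging (LYM‑type) argument, and treat $r\ge 3$ by induction on $r$, peeling one set off each $r$‑tuple exactly as in the proof of \tref{gamma}. For the reduction, given an antichain $\cF\subseteq 2^{[n]}$ group the $r$‑subsets whose intersection has size $i$ according to the actual intersection $I$: for fixed $I\in\binom{[n]}{i}$ the members of $\cF$ containing $I$, restricted to $[n]\setminus I$, form an antichain on $n-i$ points, and the $r$‑subsets of $\cF$ with intersection exactly $I$ correspond bijectively to the $r$‑subsets of that restricted antichain with empty intersection. Hence $\beta^r_i(\cF)\le\binom{n}{i}M^r(m)$ with $m=n-i$, where $M^r(m)$ is the maximum of $\beta^r_0$ over antichains on $[m]$, whereas $\beta^r_i(\binom{[n]}{i+\ell})=\binom{n}{i}\beta^r_0(\binom{[n-i]}{\ell})$; so it suffices to bound $\beta^r_0(\cA)$ for antichains on an arbitrary ground set $[m]$ by a polynomial multiple of a level value, and then take $j:=i+(\text{that level})$.

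\emph{The case $r=2$, which gives part (a).} Average over a uniformly random permutation $\pi$ of $[m]$, thought of as a maximal chain. Call a disjoint pair $\{A,B\}\subseteq\cA$ \emph{captured} by $\pi$ if one of $A,B$ is an initial segment of $\pi$ and the other a final segment. Since $\cA$ is an antichain, $\pi$ has at most one initial segment in $\cA$ and at most one final segment in $\cA$, so $\pi$ captures at most one pair; meanwhile $\Pr[\{A,B\}\text{ captured}]=2\,|A|!\,|B|!\,(m-|A|-|B|)!/m!=2\binom{m}{|A|,|B|,m-|A|-|B|}^{-1}$. Taking expectations yields $\sum_{\{A,B\}}\binom{m}{|A|,|B|,m-|A|-|B|}^{-1}\le\frac12$, so the number of disjoint pairs in $\cA$ is at most $\frac12\max_{a,b}\binom{m}{a,b,m-a-b}$. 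A short check shows this multinomial maximum is attained at a triple with two equal parts, and so equals $\beta^2_0(\binom{[m]}{\lfloor m/3\rfloor})$ when $m\equiv 0,1\pmod 3$ and $\beta^2_0(\binom{[m]}{\lceil m/3\rceil})$ when $m\equiv 2\pmod 3$. Combined with the reduction this is part (a) (and the $r=2$ case of \tref{rtuples}).

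\emph{The case $r\ge 3$, part (b), by induction on $r$ with $r=2$ as base.} Let $\cA$ be an antichain on $[m]$ and peel one set off each $r$‑tuple with empty intersection: if $F_1,\dots,F_{r-1}$ intersect in $A$, the admissible choices of $F_r$ are the members of $\cA$ disjoint from $A$, which form an antichain on $[m]\setminus A$ and hence number at most $\binom{m-|A|}{\lfloor(m-|A|)/2\rfloor}$ by Sperner's theorem. Summing, grouping by $|A|$, and using the induction hypothesis for $r-1$ gives
\[
r\,\beta^r_0(\cA)\ \le\ m^{2r}\max_{0\le a\le m}\binom{m}{a}\,\beta^{r-1}_0\left(\binom{[m-a]}{t_{r-1}^{*}(m-a)}\right)\binom{m-a}{\lfloor(m-a)/2\rfloor},
\]
where $t_{r-1}^{*}(s)$ denotes the level maximizing $\beta^{r-1}_0$ over levels of $[s]$. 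Let $a^{*}$ attain this maximum and put $\ell^{*}:=a^{*}+t_{r-1}^{*}(m-a^{*})$; counting only those $r$‑tuples of $\ell^{*}$‑sets whose first $r-1$ members share a common $a^{*}$‑set shows $r\,\beta^r_0(\binom{[m]}{\ell^{*}})\ge\binom{m}{a^{*}}\beta^{r-1}_0(\binom{[m-a^{*}]}{t_{r-1}^{*}(m-a^{*})})\binom{m-a^{*}}{\ell^{*}-a^{*}}$. Thus the upper and lower bounds differ only by the single factor $\binom{m-a^{*}}{\lfloor(m-a^{*})/2\rfloor}/\binom{m-a^{*}}{\ell^{*}-a^{*}}$, and once this is polynomially bounded we are done with $j(r,i,n)=i+\ell^{*}_{n-i}$.

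\emph{The obstacle.} The crux is to show that this last factor is bounded, i.e.\ that $t_{r-1}^{*}(m-a^{*})$ lies within $O(\sqrt m)$ of $(m-a^{*})/2$ — that the optimal $(r-1)$‑level inside the complement of a near‑optimal $a^{*}$‑set sits near the middle of that complement. For this I would propagate through the induction, besides the inequality itself, the asymptotics of the level optimum: writing $\max_\ell\beta^r_0(\binom{[m]}{\ell})=2^{(\mu_r+o(1))m}$ and $t_r^{*}(m)=(\rho_r+o(1))m$, the displayed recursion forces $\mu_r=\log_2(2^{\mu_{r-1}+1}+1)$ with $\mu_2=\log_2 3$ (so $\mu_r=\log_2(2^r-1)$), and the maximizer satisfies $a^{*}/(m-a^{*})=2^{-(\mu_{r-1}+1)}+o(1)$; the self‑consistent identity to carry along is $\rho_{r-1}=\frac12-2^{-(\mu_{r-1}+1)}$ (equivalently $\rho_r=\frac{2^{r-1}-1}{2^r-1}$), which makes $a^{*}+\rho_{r-1}(m-a^{*})$ equal to $(m-a^{*})/2$ on the nose. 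Together with the usual Stirling/ratio estimates locating each of these maxima to within $O(\sqrt m)$ of its asymptotic position, this bounds the offending factor by a constant and yields $\beta^r_0(\cA)\le n^{2r+1}\beta^r_0(\binom{[m]}{\ell^{*}})$; pinning the polynomial down to exactly $n^{2r+1}$ is the only remaining bookkeeping, and I expect it — rather than any conceptual difficulty — to be the fiddliest part.
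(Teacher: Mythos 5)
Your reduction to $i=0$ (grouping by the actual intersection $I$ and passing to the antichain $\cF_I$ on $[n]\setminus I$) is exactly the paper's reduction. For part \textbf{(a)} your permutation-averaging argument is correct and genuinely different from the paper's: the paper instead passes to $\cF\cup\overline{\cF}$, observes that disjoint pairs in $\cF$ become $2$-chains in this $P_3$-free family, and then invokes Katona's weighted antichain theorem for the weight $\tfrac12\binom{|F|}{\lceil|F|/2\rceil}$. Your LYM-type count $\sum_{\{A,B\}}\binom{m}{|A|,|B|,m-|A|-|B|}^{-1}\le\tfrac12$ is more self-contained (no external weighted Sperner theorem needed) and identifies the same extremal levels; both proofs are fine.

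For part \textbf{(b)} the obstacle you flag is a genuine gap, and it is precisely the point where your route diverges from the paper's in a way that matters. When you peel off $F_r$ knowing only the intersection $A$ of $F_1,\dots,F_{r-1}$, you bound the candidates by Sperner on $[m]\setminus A$, i.e.\ by $\binom{m-|A|}{\lfloor(m-|A|)/2\rfloor}$, whereas in the single-level lower bound the candidates are the $\ell^*$-subsets of $[m]\setminus A^*$; closing the resulting ratio requires locating the optimal level within $O(\sqrt m)$ of $(m-a^*)/2$, and your proposed bookkeeping of $\mu_r,\rho_r$ is only a first-order heuristic that is asserted, not carried out (second-order localization of two coupled maximizers is needed, and your lower-bound factor $\binom{m-a^*}{\ell^*-a^*}$ should in any case be $\binom{m-a^*}{\ell^*}$). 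The paper sidesteps this entirely by first proving \tref{gamma}, i.e.\ by recording both the intersection size $a$ and the union size $b$ of $F_1,\dots,F_{r-1}$: then the candidates for $F_r$ must contain the complement of the union and avoid the intersection, so they form an antichain on a ground set of size $b-a$, and in the extremal configuration (a single middle level, with $a^*+b^*\approx n$ forced by a one-parameter ratio computation) these candidates are exactly the middle level of that ground set. The Sperner upper bound is then matched by the lower bound with no exponential loss, and part \textbf{(b)} of \tref{intuni} follows in two lines by writing $\beta^r_i=\sum_j\gamma^r_{i,j}$ and taking the largest summand. So your plan is repairable in principle, but as written the key analytic step is missing; if you want a complete proof along structural lines, the fix is to strengthen the induction hypothesis to track the union as well, which is exactly \tref{gamma}.
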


\begin{proof}
First we prove \textbf{(a)} for the special case $i=0$. Let $\cF \subseteq 2^{[n]}$ be an antichain, and let $\overline{\cF}=\{\overline{F}:F\in \cF\}$, where $\overline{F}=[n]\setminus F$. As $\cF$ is an antichain, so is $\overline{\cF}$, and thus $\cF\cup \overline{\cF}$ is $P_3$-free. Note that for every pair $F_1,F_2 \in \cF$ with $|F_1\cap F_2|=0$ and $F_1\cup F_2\neq [n]$, we have two 2-chains $F_1 \subsetneq \overline{F_2}$ and $F_2 \subsetneq \overline{F_1}$. Also, every 2-chain in $\cF\cup \overline{\cF}$ comes from a pair $F_1,F_2 \in \cF$ with $|F_1\cap F_2|=0$ and $F_1\cup F_2\neq [n]$. 

Therefore, if we take the canonical partition of $\cF\cup \overline{\cF}$ into $\cF_1\cup \cF_2$ and introduce the weight function
$w(F)=\frac{1}{2}\binom{|F|}{\lceil |F|/2\rceil}$ if $F\in \cF_2,\overline{F}\notin \cF_2$ and $w(F)=1/2$ if $F\in \cF_2, \overline{F}\in \cF_2$, then the number of disjoint pairs in $\cF$ equals $\sum_{F\in F_2}w(F)$. This weight function does not depend only on the size of $F$, but $w'(f)=\frac{1}{2}\binom{|F|}{\lceil |F|/2\rceil}$ does and obviously $w(F)\le w'(F)$ holds for all $F$'s. As proved by Katona in Theorem 3 of \cite{K} this weight function is maximized when $\cF_2=\binom{[n]}{\lceil 2n/3\rceil}$. As $\cF_2$ does not contain complement pairs, it also maximizes $w$.

To see the general statement of \textbf{(a)}, we can apply the special case to any $I\subseteq [n]$ and $\cF_I=\{F\setminus I: I\subseteq F\in\cF \}$. We obtain $$\beta^2_i(\cF)=\sum_{I\in \binom{[n]}{i}}\beta^2_0(\cF_I)\le \binom{n}{i}\beta^2_0\left(\binom{[n-i]}{j(i)-i}\right)=\beta^2_i\left(\binom{[n]}{j(i)}\right).$$

To see \textbf{(b)}, let $\cF\subseteq 2^{[n]}$ be antichain. Observe
\begin{equation*}
\begin{split}
\beta^r_i(\cF)=\sum_{j=i+1}^n\gamma^r_{i,j}(\cF)\le & n\max_{j: i+1\le j\le n}\gamma^r_{i,j}(\cF)\\
\le & n^{2r+1}\max_{j: i+1\le j\le n}\gamma^r_{i,j}\left(\binom{[n]}{\lfloor (i+j)/2\rfloor}\right)\le n^{2r+1}\beta^r_i\left(\binom{[n]}{j(r,i,n)}\right),
\end{split}
\end{equation*}
where $j(r,i,n)=\lfloor (i+j^*)/2\rfloor$ with $j^*$ being the value of $j$ that maximizes $\gamma^r_{i,j}(\binom{[n]}{\lfloor (i+j)/2\rfloor})$. The penultimate inequality follows from \tref{gamma}.
\end{proof}

\begin{proof}[Proof of \tref{ragasztas1}]
Let $Q_1,Q_2$ be non-empty posets and let us consider the canonical partition of a $P_{l(Q_1\otimes_r Q_2)+1}$-free family $\cF\subseteq 2^{[n]}$.  Then in any copy of $Q_1\otimes_r Q_2$ in $\cF$, if $F_1,\dots,F_r$ correspond to the $r$ middle elements forming an antichain, we must have $F_1,\dots, F_r \in \cF_{l(Q_1)+1}$. Also, if a copy of $Q_1\otimes_r Q_2$ contains $F_1,\dots, F_r$, then the sets corresponding to the $Q_1$ part of $Q_1\otimes_r Q_2$ must be contained in $\bigcap_{l=1}^r F_l$, while the sets corresponding to the $Q_2$ part of $Q_1\otimes_r Q_2$ must contain $\bigcup_{l=1}^rF_l$. Therefore the number of copies of $Q_1\otimes_r Q_2$ in $\cF$ that contain $F_1,\dots, F_r$ is at most $La(|\bigcap_{l=1}^r F_l|,P_{l(Q_1)+1},Q_1)\cdot La(n-|\bigcup_{i=1}^r F_l|,P_{l(Q_2)+1},Q_2)$. We obtained that the total number of copies of $Q_1\otimes_r Q_2$ in $\cF$ is at most
\begin{equation}\label{lala}
\sum_{F_1,\dots,F_r\in \cF_{l(Q_1)+1}}La(|\bigcap_{l=1}^r F_l |,P_{l(Q_1)},Q_1)\cdot La(n-|\bigcup_{l=1}^r F_l|,P_{l(Q_2)+1},Q_2).
\end{equation}
If $r\ge 2$, then grouping the summands in (\ref{lala}) according to the pair $(|\bigcap_{l=1}^r F_l|,|\bigcup_{l=1}^r F_l|)$ we obtain 
$$
La(n,P_{l(Q_1\otimes_r Q_2)+1},Q_1\otimes_r Q_2)\le \gamma^r(\cF_{l(Q_1)+1})\cdot \mathbf{w},
$$
where the $(i,j)$th coordinate of $\mathbf{w}$ is $La(i,P_{l(Q_1)},Q_1)\cdot La(n-j,P_{l(Q_2)+1},Q_2).$ Clearly, we have $$\gamma^r(\cF_{l(Q_1)+1})\cdot \mathbf{w}\le n^2\max_{i,j}\gamma^r_{i,j}(\cF_{l(Q_1)+1})w(i,j)\le n^{2r+2}\max_{i,j}\gamma^r_{i,j}\left(\binom{[n]}{\lfloor(i+j)/2\rfloor}\right)w(i,j),$$
where the last inequality follows from \tref{gamma}. We can calculate  $\gamma^r_{i,j}(\binom{[n]}{\lfloor(i+j)/2\rfloor})$ by picking the union of size $j$ and the intersection of size $i$ first, which finishes the proof of the upper bound of part \textbf{(a)}. 

To see the furthermore part suppose that the above maximum is obtained when $i$ takes the value $i^*$ and $j$ takes the value $j^*$. We know that there exist two families $\cF_{1,i^*}\subseteq 2^{[i^*]}$ and $\cF_{2,n-j^*}\subseteq 2^{[n-j^*]}$, both unions of full levels, integers $k_1,k_2$ and constants $C_1,C_2$ such that $C_1(i^*)^{k_1}c(Q_1,\cF_{1,i^*})\ge La(i^*,P_{l(Q_1)}Q_1)$ and $C_2(j^*)^{k_2}c(Q_2,\cF_{2,n-j^*})\ge La(n-j^*,P_{l(Q_2)},Q_2)$ hold. Therefore by the upper bound already proven, we know that $La(n,P_{l(Q_1\otimes_r Q_2)+1},Q_1\otimes_r Q_2)$ is at most $n^{2r+2}C_1(i^*)^{k_1}C_2(j^*)^{k_2}\gamma^r_{i^*j^*}(\binom{n}{\lfloor (i^*+j^*)/2\rfloor})La(i^*,P_{l(Q_1)},Q_1)La(n-i^*,P_{l(Q_2)},Q_2)$.

If $\cF_{1,i^*}$ consists of levels of set sizes $h_1,\dots, h_{l(Q_1)}$ and $\cF_{2,n-i^*}$ consists of levels of set sizes $h'_1,\dots, h'_{l(Q'_2)}$, then for the family 
$$\cF:=\binom{[n]}{h_1}\cup \dots \binom{[n]}{h_{l(Q_1)}} \cup \binom{[n]}{\lfloor (i^*+j^*)/2\rfloor}\cup \binom{[n]}{j^*+h'_1}\cup \dots \cup \binom{[n]}{j^*+h'_{l(Q_2)}}$$
we have $c(Q_1\otimes Q_2, \cF)\ge \gamma^r_{i^*j^*}(\binom{n}{\lfloor (i^*+j^*)/2\rfloor})La(i^*,P_{l(Q_1)},Q_1)La(n-i^*,P_{l(Q_2)},Q_2)$. Therefore with $k=2r+2+k_1+k_2$ the family $\cF$ shows that \cjref{genconj} almost holds for the pair $P_{l(Q_1\otimes_r Q_2)+1},Q_1\otimes_r Q_2$.

If $r=1$, then $\cup F_1=\cap F_1=F_1$, so (\ref{lala}) becomes 
$$
\sum_{F\in \cF_{l(Q_1)+1}}La(|F|,P_{l(Q_1)},Q_1)\cdot La(n-| F|,P_{l(Q_2)+1},Q_2).
$$
We can apply \cref{lchain} with $l=k=1$ and $w(i)=La(i,P_{l(Q_1)},Q'_1)\cdot La(i,P_{l(Q_2)},Q'_2)$ to obtain 
$$La(n,P_{l(Q_1\otimes Q_2)+1},Q_1\otimes Q_2)\le \max_{0\le i\le n}\left\{\binom{n}{i}La(i,P_{l(Q_1)},Q'_1)La(n-i,P_{l(Q_2)},Q'_2)\right\}$$
as required.

As the proofs are almost identical we only show the 'strongly holds' case of the furthermore part of \textbf{(b)}. Suppose that the above maximum is obtained when $i$ takes the value $i^*$. We know that there exist two families $\cF_{1,i^*}\subseteq 2^{[i^*]}$ and $\cF_{2,n-i^*}\subseteq 2^{[n-i^*]}$, both unions of full levels, such that $c(Q_1,\cF_{1,i^*})=La(i^*,P_{l(Q_1)}Q_1)$ and $c(Q_2,\cF_{2,n-i^*})=La(n-i^*,P_{l(Q_2)},Q_2)$ hold. If $\cF_{1,i^*}$ consists of levels of set sizes $j_1,\dots, j_{l(Q_1)}$ and $\cF_{2,n-i^*}$ consists of levels of set sizes $j'_1,\dots, j'_{l(Q'_2)}$, then for the family 
$$\cF:=\binom{[n]}{j_1}\cup \dots \binom{[n]}{j_{l(Q_1)}} \cup \binom{[n]}{i^*}\cup \binom{[n]}{i^*+j'_1}\cup \dots \cup \binom{[n]}{i^*+j'_{l(Q_2)}}$$
we have $c(Q_1\otimes Q_2, \cF)=\binom{n}{i^*}La(i^*,P_{l(Q_1)},Q_1)La(n-i^*,P_{l(Q_2)},Q_2)$.
\end{proof}

\begin{proof}[Proof of \tref{ragasztas2}]
The proof goes very similarly to the proof of \tref{ragasztas1}. Let us consider the canonical partition of a $P_{l(Q\otimes r)+1}$-free family $\cF\subseteq 2^{[n]}$. Then in any copy of $Q\otimes r$ in $\cF$, if $F_1,\dots,F_r$ correspond to the $r$ top elements forming an antichain, we must have $F_1,\dots, F_r \in \cF_{l(Q)+1}$. Also, if a copy of $Q\otimes r$ contains $F_1,\dots, F_r$, then the sets corresponding to the other elements of the poset must be contained in $\bigcap_{l=1}^r F_l$. Let $j=|\bigcap_{l=1}^r F_l|$. Then the number of copies of $Q\otimes r$ in $\cF$ that contain $F_1, \dots, F_r$ is at most $La(j,P_{l(Q)+1},Q)$. If $r\ge 2$, we obtain
$$
c(Q\oplus r,\cF)\le \beta^r(\cF_{l(Q)+1})\cdot \mathbf{w},
$$
where the $j$th coordinate of $\mathbf{w}$ is $La(j,P_{l(Q)+1},Q)$. Clearly we have $\beta^r(\cF_{l(Q)+1})\cdot \mathbf{w}\le n \max_{i}\beta_i^r(\cF_{l(Q)+1})w(i)\le n^{2r+2}\max_i \beta_i^r(\binom{[n]}{j(r,i,n)})w(i),$ where the last inequality follows from \tref{intuni}. We have $\beta_i^r(\binom{[n]}{j(r,i,n)})=\binom{n}{i}\beta_0^r(\binom{[n-i]}{j(r,i,n)-i})$ by picking the intersection of size $i$ first. 

To see the furthermore part of \textbf{(a)}, let $i^*$ be the value of $i$ for which the above maximum is attained. Then if $\cF_{i^*}=\binom{[i^*]}{h_1}\cup \dots \cup \binom{[i^*]}{h_{l(Q)}}$ is a family with $Cn^kc(Q,\cF_{i^*})\ge La(i^*,P_{l(Q)+1},Q)$, then for the family $\cF^*=\binom{[n]}{h_1}\cup \dots \cup \binom{[n]}{h_{l(Q)}}\cup \binom{[n]}{j(r,i^*,n)}$, we have 
\[
c(Q\oplus r, \cF^*)\ge \beta^r_{i^*}\left(\binom{[n]}{j(r,i^*,n)}\right)c(Q,\cF_{i^*})\ge \binom{n}{i^*}\beta^r_{0}\left(\binom{[n-i^*]}{j(r,i^*,n)-i^*}\right)\frac{1}{Cn^k}La(n,i^*,Q),
\]
therefore $\cF^*$ with $C'=C$ and $k'=2r+2+k$ shows that \cjref{genconj} almost holds for the pair $P_{l(Q)+2},Q\oplus r$.

If $r=1$, then $|\cap F_1|=|F_1|$, so applying \cref{lchain} with $l=k=1$ we obtain
\[
c(Q\oplus 1, \cF)\le \sum_{F\in\cF_{l(Q)+1}}La(|F|,P_{l(Q)+1},Q)\le \max_{0\le i\le n}\left\{\binom{n}{i}La(i,P_{l(Q)+1},Q)\right\}. 
\]

The proof of the furthermore part of \textbf{(b)} is analogous to the previous ones and is left to the reader.
\end{proof}

\begin{proof}[Proof of \cref{genmult}]
We proceed by induction on the number of levels. The base case is guaranteed by Sperner's \tref{sperner}. The inductive step follows by applying \tref{ragasztas2} as $K_{r_1,\dots,r_l}=K_{r_1,\dots, r_{l-1}}\oplus r_l$.
\end{proof}

\begin{proof}[Proof of \cref{genmult1}]
We proceed by induction on the number of levels. The base case is guaranteed by Sperner's \tref{sperner}. Suppose the statement has been proved for all complete multipartite posets satisfying the condition with height smaller than $l$ and consider $K_{r_1,r_2,\dots,r_l}$. We know that there exists an $i$ with $1\le i \le l$ such that $r_i=1$. If $1<i<l$, then the inductive step will follow by applying the furthermore part of \tref{ragasztas1} to $Q_1=K_{r_1,\dots, r_{i-1}}$ and $Q_2=K_{r_{i+1},\dots, r_l}$. If $i=l$, then the inductive step will follow by applying the furthermore part of \tref{ragasztas2} to $Q=K_{r_1,\dots, r_{l-1}}$ and $r=1$. The case $i=1$ follows from $c(K_{r_1,\dots,r_l},\cF)=c(K_{r_l,\dots,r_1},\overline{\cF})$, where $\overline{\cF}=\{[n]\setminus F:F\in\cF\}$.
\end{proof}

\begin{proof}[Proof of \tref{multi1}]
We only give the sketch of the proof as it is very similar to previous ones. Consider a $P_{l+3}$-free family $\cF\subseteq 2^{[n]}$ and its canonical partition. If $l=1$, then we count the number of copies of $K_{r,1,s}$ according to the set $F\in \cF_2$ that plays the role of the middle element of $K_{r,1,s}$. The number of copies that contain $F$ is not more than $\binom{\binom{|F|}{\lfloor |F|/2\rfloor}}{r}\binom{\binom{n-|F|}{\lfloor (n-|F|)/2\rfloor}}{s}$. Applying \cref{lchain} with $l=k=1$ and $w(i)=\binom{\binom{i}{\lfloor i/2\rfloor}}{r}\binom{\binom{n-i}{\lfloor (n-i)/2\rfloor}}{s}$ yields $c(K_{r,1,s},\cF)\le \max_i\binom{n}{i}\binom{\binom{i}{\lfloor i/2\rfloor}}{r}\binom{\binom{n-i}{\lfloor (n-i)/2\rfloor}}{s}$. Let $i^*$ be the value of $i$ for which this maximum is attained. Then the family $\binom{[n]}{\lfloor i^*/2\rfloor}\cup \binom{[n]}{i^*}\cup \binom{[n]}{\lfloor (n+i^*)/2\rfloor}$ contains exactly that many copies of $K_{r,1,s}$.

If $l\ge 2$, then we count the number of copies of $K_{r,1,1,\dots,1,s}$ according to the sets $F_2\in \cF_2$ and $F_{l+1}\in \cF_{l+1}$ playing the role of the elements on the second and $(l+1)$st level of $K_{r,1,1,\dots,1,s}$. For a fixed pair $F_2\in \cF_2$ and $F_{l+1}\in \cF_{l+1}$ with $F_2 \subset F_{l+1}$ the number of copies of $K_{r,1,1,\dots,1,s}$ containing $F_2$ and $F_{l+1}$ is at most $\binom{\binom{|F_2|}{\lfloor |F_2|/2\rfloor}}{r}\binom{\binom{n-|F_{l+1}|}{\lfloor (n-|F_{l+1}|)/2\rfloor}}{s}La(|F_{l+1}|-|F_2|,P_{l-1},P_{l-2})$. The value of $La(|F_{l+1}|-|F_2|,P_{l-1},P_{l-2})$ is given by \tref{gp}. So we can apply \cref{lchain} with $l=k=2$ and $w(i,j)=\binom{\binom{i}{\lfloor i/2\rfloor}}{r}\binom{\binom{n-j}{\lfloor (n-j)/2\rfloor}}{s}La(j-i,P_{l-1},P_{l-2})$ to obtain $c(K_{r,1,1\dots,1,s},\cF)\le \max_{i,j}\binom{n}{j}\binom{j}{i}\binom{\binom{i}{\lfloor i/2\rfloor}}{r}\binom{\binom{n-j}{\lfloor (n-j)/2\rfloor}}{s}La(j-i,P_{l-1},P_{l-2})$. Let $i^*$ and $j^*$ be the values of $i$ and $j$ for which this maximum is attained. Then the family consisting of $\binom{[n]}{\lfloor i^*/2\rfloor}, \binom{[n]}{i^*},\binom{[n]}{j^*}, \binom{[n]}{\lfloor (n+j^*)/2\rfloor}$ and the $l-2$ full levels determined by \tref{gp} contains exactly that many copies of $K_{r,1,1,\dots,1,s}$.
\end{proof}




\vskip 0.3truecm

There are several other complete multi-partite posets for which one can determine the levels that form an almost optimal family. For example, using \tref{intuni} \textbf{(a)} one can prove that $La(n,P_3,K_{p,2})\le nc(K_{p,2},\cF)$ where $\cF=\binom{[n]}{i}\cup \binom{[n]}{j}$ with $i=(\frac{2^{p-1}}{3+2^p}+o(1))n$ and $j=(\frac{1+2^p}{3+2^p}+o(1))n$. In particular $La(n,P_3,K_{p,2})=2^{(c_p+o(1))n}$, where $c_p=\frac{2+p\cdot 2^p}{3+2^p}+h(\frac{2^p}{3+2^p})+\frac{3}{3+2^p}h(2/3)$.

\begin{figure}[h]
\centering
\includegraphics{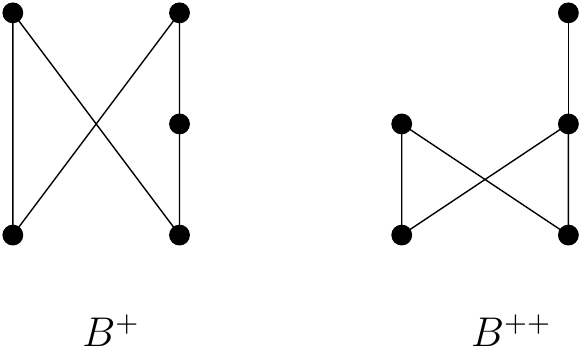}
\caption{The Hasse diagrams of the posets $B^+$ and $B^{++}$}
\label{fig:defs2}
\end{figure}

Let us finish this section by some remarks about $K_{2,2}=B$ as there exist several extremal results concerning $B$. Let us consider the following two posets that contain $B$:  $B^+$ and $B^{++}$ have five elements $a,b,c,d,e$ such that $a<_{B^+}c,e$ and $b<_{B^+} c,d$ and also $d<_{B^+}e$, while $a,b<_{B^{++}}c,d$ and $d<_{B^{++}}e$. By results of DeBonis, Katona, and Swanepoel \cite{DKS} and Methuku and Tompkins \cite{MT} we know that $La(n,B,P_1)=La(n,B^+,P_1)={n \choose \lfloor n/2\rfloor}+{n \choose \lfloor n/2\rfloor+1}$, and as $B^{++}$ contains a copy of $B^+$ we have $La(n,B^+,P_1)\le La(n,B^{++},P_1)$. It is natural to ask how many copies of $B$ can a $B^+$-free or $B^{++}$-free family in $2^{[n]}$ contain, especially that the largest $B^+$-free family does not contain any. Obviously, a $P_3$-free poset is both $B^+$-free and $B^{++}$-free, therefore we obtain the inequality $La(n,P_3,B)\le La(n,B^+,B)\le La(n,B^{++},B)$. The next proposition shows that these functions are asymptotically equal.

\begin{proposition}\label{prop:BB}
$$
La(n,P_3,B)\le La(n,B^+,B) \le La(n,B^{++},B)=(1+o(1))La(n,P_3,B).$$
\end{proposition}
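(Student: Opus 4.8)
The two inequalities are purely formal. The chain $b<_{B^+}d<_{B^+}e$ exhibits $P_3$ as a subposet of $B^+$, and $B^{++}$ contains a copy of $B^+$ (as noted above), so every $P_3$-free family is $B^+$-free and every $B^+$-free family is $B^{++}$-free; enlarging the class of admissible families can only increase $\max_\cF c(B,\cF)$. Hence the whole content lies in the bound $La(n,B^{++},B)\le(1+o(1))La(n,P_3,B)$. I would first record the order of magnitude of the right-hand side: $La(n,P_3,B)=La(n,P_3,K_{2,2})=\Theta(7^n/\mathrm{poly}(n))$, since the two-level construction in the remark after \tref{multi1} (with $p=2$) attains $\Theta(7^n/\mathrm{poly}(n))$ copies of $B$, matching the trivial upper bound $c(B,2^{[n]})=\Theta^*\big(\sum_{Y_1,Y_2}2^{2|Y_1\cap Y_2|}\big)=\Theta^*(7^{n})$ (the typical copy has tops of size $\approx\tfrac57 n$ and bottoms of size $\approx\tfrac27 n$).

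Now fix a $B^{++}$-free $\cF\subseteq 2^{[n]}$ and let $\cM$ be its antichain of maximal elements. The structural engine is the \emph{Key Lemma}: if $Y\in\cF$ has two distinct elements of $\cF$ strictly below it, then $Y$ is contained in at most one member of $\cM$. Indeed, if $M_1\ne M_2$ in $\cM$ both contained $Y$, then since $\cM$ is an antichain we may assume $Y\subsetneq M_1$ and $Y\subsetneq M_2$, and then $Y$, two sets below it, and $M_1,M_2$ form a copy of $B^{++}$. I would draw the following consequences. \emph{(i)} For $Y\in\cM$ the family $\cF_{<Y}:=\{F\in\cF:F\subsetneq Y\}$ is $B$-free (a copy of $B$ in it plus $Y$ on top is a copy of $B^{++}$). \emph{(ii)} For distinct $Y_1,Y_2\in\cM$, the family $\cF\cap 2^{Y_1\cap Y_2}$ is $P_3$-free, since the top of a $3$-chain in it would have two elements of $\cF$ below it while lying inside both $Y_1$ and $Y_2$, contradicting the Key Lemma. \emph{(iii)} In $\cF\cap 2^{Y_1\cap Y_2}$ the minimal elements are already minimal in $\cF$, and every other element has exactly one element of $\cF$ strictly below it. \emph{(iv)} $\cM\cup\{\text{minimal elements of }\cF\}$ is $P_3$-free (in a $3$-chain the bottom is non-maximal, the top is non-minimal, the middle is neither).

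Next I would classify the copies of $B$. Writing a copy as $\{X_1,X_2,Y_1,Y_2\}$ with bottoms $X_1,X_2$ and tops $Y_1,Y_2$ (so $X_i\subsetneq Y_j$), each $Y_j$ has two distinct elements of $\cF$ below it, and a short case analysis with the Key Lemma shows that exactly one of two situations occurs: \textbf{(A)} $Y_1,Y_2\in\cM$ and they are incomparable; or \textbf{(B)} $X_1,X_2\subsetneq Y_1\subsetneq Y_2$ with $Y_2\in\cM$. For a type (B) copy, the map to the copy of $\bigwedge_2$ formed by $X_1,X_2,Y_1$ is injective, since the Key Lemma forces $Y_1$ to have a unique maximal superset, namely $Y_2$. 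Hence the number of type (B) copies is at most $c(\bigwedge_2,\cF)=\sum_{Y\in\cF}\binom{|\{G\in\cF:G\subsetneq Y\}|}{2}\le\sum_{Y\subseteq[n]}\binom{2^{|Y|}}{2}\le\tfrac12\sum_{k=0}^n\binom{n}{k}4^k=\tfrac12\,5^{n}=o\big(7^n/\mathrm{poly}(n)\big)=o(La(n,P_3,B))$, so type (B) copies are negligible.

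It remains to bound the type (A) copies, whose number is exactly $\sum_{\{Y_1,Y_2\}\subseteq\cM}\binom{|\cF\cap 2^{Y_1\cap Y_2}|}{2}$, each inner family being $P_3$-free in $2^{Y_1\cap Y_2}$ by \emph{(ii)}. \textbf{This is the main obstacle.} A term-by-term Sperner bound $|\cF\cap 2^{Y_1\cap Y_2}|\le La(|Y_1\cap Y_2|,P_3,P_1)$ overshoots: it uses the full middle level of $Y_1\cap Y_2$ rather than the thinner slice that the two-level optimum uses, and it ignores that all the restrictions $\cF\cap 2^{Y_1\cap Y_2}$ come from one family $\cF$; such a bound yields only $La(n,B^{++},B)=O(\mathrm{poly}(n)\cdot La(n,P_3,B))$, i.e.\ the ``almost holds'' version. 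To reach the sharp $(1+o(1))$ factor I would exploit \emph{(iii)} and \emph{(iv)}: the minimal parts of all the families $\cF\cap 2^{Y_1\cap Y_2}$ live inside the single antichain of minimal elements of $\cF$, and the Key Lemma leaves essentially no room for sets sandwiched strictly between a rich set of minimal elements and a rich $\cM$ (any such set has $\ge2$ minimal elements of $\cF$ below it yet lies below $\ge2$ members of $\cM$). The plan is to turn this ``squeezing'' into a quantitative stability statement: whenever the sets counted by $|\cF\cap 2^{Y_1\cap Y_2}|$ beyond the minimal ones are non-negligible, the antichains of minimal and maximal elements must be correspondingly far from their optima, so the $P_3$-free count $\sum_{\{Y_1,Y_2\}\subseteq\cM}\binom{|\{\text{minimal }F\subseteq Y_1\cap Y_2\}|}{2}\le(1+o(1))La(n,P_3,B)$ (bounded via \emph{(iv)}) still dominates. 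Carrying out this stability estimate precisely enough to recover the exact constant is, I expect, the crux of the proposition.
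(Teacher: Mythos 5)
Your two formal inequalities, the Key Lemma, the classification of copies of $B$ into types (A) and (B), and the $5^n$ bound showing type (B) is negligible are all correct; indeed your Key Lemma is essentially the same $B^{++}$-dichotomy (a set with two members of $\cF$ strictly below it has at most one member of $\cF$ strictly above it, and dually) that drives the paper's argument, and your order-of-magnitude claim is right since $c_2=10/7+h(4/7)+\tfrac37h(2/3)=\log_27$. But your proof stops exactly where the proposition still has to be proved: you reduce everything to showing $\sum_{\{Y_1,Y_2\}\subseteq\cM}\binom{|\cF\cap 2^{Y_1\cap Y_2}|}{2}\le(1+o(1))La(n,P_3,B)$, note that the term-by-term Sperner bound loses a polynomial factor, and then offer only an unexecuted ``stability'' plan. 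That is a genuine gap, not a routine verification. The problematic sets are exactly the sandwiched ones with a unique member of $\cF$ below them: your Key Lemma does not apply to them, they may lie inside many of the intersections $Y_1\cap Y_2$ simultaneously, and nothing in your consequences (i)--(iv) controls their aggregate contribution to the type (A) count. As written, your argument establishes only $La(n,B^{++},B)=O(\mathrm{poly}(n)\cdot La(n,P_3,B))$, i.e.\ the ``almost holds'' statement, not the asymptotic equality.

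The paper closes this gap with a different decomposition that never organizes the count around the maximal antichain, and in which the main term is trivial. It first removes the family $\cS$ of second-smallest elements of $4$-chains in $\cF$ (such a set is incomparable to everything outside its chain, hence lies in at most one copy of $B$, for a total contribution of at most $\binom{n}{\lfloor n/2\rfloor}$), and then removes the family $\cS'$ of middle elements of $3$-chains of the remaining $P_4$-free family. Each $S\in\cS'$ has either a unique superset or a unique subset in the family, which forces part of any copy of $B$ through $S$ and bounds the number of such copies by $\binom{\binom{|S|}{\lfloor|S|/2\rfloor}}{2}$-type weights; summing these over the antichain $\cS'$ via \cref{lchain} and optimizing $\binom{n}{i}\binom{\binom{i}{\lfloor i/2\rfloor}}{2}$ at $i\approx 4n/5$ gives a total of $2^{(h(4/5)+8/5+o(1))n}$. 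Since $h(4/5)+8/5=2.32\ldots<\log_27$, both removed parts contribute $o(La(n,P_3,B))$, and the surviving family is literally $P_3$-free, so its $B$-count is at most $La(n,P_3,B)$ outright --- no stability analysis is required. If you want to rescue your route, the lesson from the paper is to bound the number of copies through each individual ``bad'' set by an exponentially subcritical quantity, rather than trying to control the main term in aggregate.
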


\begin{proof}

\vskip 0.2truecm

The first two inequalities are true by definition. Note that $La(n,P_3,B)=2^{(c_2+o(1))n}$, where $c_2=10/7+h(4/7)+3h(2/3)/7$ by the remarks made after the proof of \tref{multi1}.

Let $\cF\subseteq 2^{[n]}$ be $B^{++}$-free, and consider its canonical partition (note that $P_5$ contains a copy of $B^{++}$, thus $\cF$ is $P_5$-free).

Consider first the family $\cS \subseteq \cF_2$ of sets that appear in 4-chains in $\cF$ (they must be the second smallest in those chains). Note that if $S\in \cS$ with $F_1\subsetneq S \subsetneq F_3 \subsetneq F_4$, then $S$ is not comparable to any other set $F$ of $\cF$ as $F,F_1,S,F_3,F_4$ would form a copy of $B^{++}$ both if $S\subsetneq F$ or $F \subsetneq S$. Therefore every set $S\in \cS$ is contained in at most one copy of $B$ in $\cF$. As $\cS\subseteq \cF_2$ is an antichain, we obtain $c(B,\cF)-c(B,\cF\setminus \cS)\le \binom{n}{n/2}$.

Clearly, $\cF'=\cF\setminus S$ is $P_4$-free. Let us consider its canonical partition and denote the resulting antichains by $\cF'_1, \cF'_2, \cF'_3$. Let $\cS'\subseteq \cF'_2$ be the family of middle sets of all 3-chains in $\cF'$. We know that for any $S\in \cS'$ there exist $F'_1,F'_3\in \cF'$ with $F'_1\subsetneq S\subsetneq F'_3$. Also, there cannot exist $F''_1,F''_3$ with $F'_1\subsetneq S\subsetneq F'_3$ as then $F'_1,F''_1,S,F'_3,F''_3$ would form a copy of $B^{++}$. So either there is a unique set $F'$ that contains $S$ and potentially several sets that are contained in $S$ or there exists a unique $F'$ contained in $S$ and several sets containing $S$. In the former case, if $S$ is contained in a copy of $B$, it can only be one of the top sets. Furthermore, if a copy of $B$ contains $S$, then it contains $F'$ as otherwise this copy could be extended by $F'$ to form a $B^{++}$. As the sets contained in $S$ form an antichain (they are a subfamily of $\cF'_1$), we obtain that the number of copies of $B$ containing $S$ is at most $\binom{\binom{|S|}{\lfloor\frac{|S|}{2}\rfloor}}{2}$. Similarly, if $S$ contains exactly one other set of $\cF'$, then the number of copies of $B$ containing $S$ is at most $\binom{\binom{n-|S|}{\lfloor\frac{n-|S|}{2}\rfloor}}{2}$. So introducing $w(i)=\max\{\binom{\binom{i}{\lfloor i/2\rfloor}}{2},\binom{\binom{n-i}{\lfloor(n-i)/2\rfloor}\}}{2}\}$ we obtain that the total number of copies containing at least one element of $\cS'$ is at most $\sum_{S\in \cS'}w(|S|)$. By the special case $k=l=1$ of \cref{lchain} we obtain that this expression is maximized over all antichains when $\cS$ is a full level of $2^{[n]}$.

The weight function $w$ is symmetric, i.e. $w(i)=w(n-i)$ holds for any $i$, therefore it is enough to maximize $\binom{n}{i}w(i)$ over $n/2\le i \le n$. It is a routine exercise to see that $\binom{n}{i}\binom{\binom{i}{\lfloor i/2\rfloor}}{2}$ is maximized when $i=(4/5+o(1))n$. Therefore the number of copies of $B$ that contain an element of $S'$ is at most $2^{h(4/5)+8/5+o(1)}$. We obtained that
\begin{equation*}
\begin{split}
c(B,\cF) & \le c(B,\cF\setminus (S\cup S'))+ \binom{n}{\lfloor n/2\rfloor}+2^{h(4/5)+8/5+o(1)} \\
& \le La(n,P_3,B)+\binom{n}{\lfloor n/2\rfloor}+2^{h(4/5)+8/5+o(1)}\\
& = (1+o(1))La(n,P_3,B),
\end{split}
\end{equation*}
as $h(4/5)+8/5=2.3219...<c_2$.
\end{proof}

\section{Remarks}

One can define an even more general parameter $La_R(P,Q)$. For three posets, $R,P$ and $Q$ we are interested in the maximum number of copies of $Q$ in subposets $R'$ of $R$ that do not contain $P$. Analogously to what we had for set families, we say that $R'\subseteq R$ is \textit{a copy of $Q$ in $R$} if there exists a bijection $\phi:Q\rightarrow R'$ such that whenever $x \le_Q x'$ holds, then so does $\phi(x)\le _{R'} \phi(x')$. Let $c(Q,R)$ denote the number of copies of $Q$ in $R$ and for any three posets $R,P$ and $Q$ we define
\[
La_R(P,Q)=\max\{c(Q,\cF): R'\subseteq R, c(P,R')=0 \},
\]
and for a poset $R$ and families of posets $\cP,\cQ$ let us define
\[
La_R(\cP,\cQ)=\max\left\{\sum_{Q\in\cQ}c(Q,R'):R'\subseteq R, \forall P\in\cP  \hskip 0.2truecm c(P,R')=0\right\}.
\]
Note that $La(n,P,Q)=La_{B_n}(P,Q)$, where $B_n$ is the poset with elements of $2^{[n]}$ ordered by inclusion.
Very recently Guo, Chang, Chen, and Li \cite{GCCL} introduced $La_R(Q,P_1)$, as a general approach to forbidden subposet problems. That is to solve the analogous question in a less complicated structure like the cycle, chain or double chain, and then to apply an averaging argument.

\bigskip
In many parts of \tref{easy}, the construction yielding the lower bound that matches the upper bound contained the empty set and/or the set $[n]$. One might wonder whether the $La$-function remains the same if we do not allow these elements to be included. In other words, if $B_n^-$ denotes the subposet of $B_n$ with $\emptyset$ and $[n]$ removed, then how $La_{B_n^-}(\vee, P_2)$ relates to $La(n,\vee, P_2)$, $La_{B_n^-}(B, P_3)$ to $La(n,B, P_3)$ and so on. The $\{\bigvee,\bigwedge\}$-free construction $\binom{[n-1]}{\lfloor(n-1)/2\rfloor} \cup \{F\cup\{n\}: F \in \binom{[n-1]}{\lfloor(n-1)/2\rfloor} \}$ and the $B$-free construction $\binom{[n-2]}{\lfloor (n-2)/2\rfloor}\cup \{F\cup \{n-1\}: F\in \binom{[n-2]}{\lfloor (n-2)/2\rfloor} \}\cup \{F\cup \{n-1,n\}: F\in \binom{[n-2]}{\lfloor(n-2)/2\rfloor} \}$ 
show that

\begin{itemize}
\item
${n-1 \choose \lfloor (n-1)/2\rfloor}\le La_{B_n^-}(\bigvee,P_2)=La_{B_n^-}(\bigwedge,P_2)\le {n\choose \lfloor n/2\rfloor}$,
\item
${n-2 \choose \lfloor(n-2)/2\rfloor}\le La_{B_n^-}(B,P_3)\le {n \choose \lfloor n/2\rfloor}$.
\end{itemize}
There is a longstanding (folklore) conjecture which would imply the existence of constructions in both cases that asymptotically match the upper bounds. Let $\cM_{k+1}\subseteq \binom{[n]}{k+1}$ be a family of sets with the property that for every $K \in \binom{[n]}{k}$ there exists at most one set $M\in \cM_{k+1}$ with $K\subsetneq M$. Obviously, for any such set we have $|\cM_{k+1}|\le \binom{n}{k}/(k+1)$ and $\cR_{k+1}:=\cM_{k+1}\cup \binom{[n]}{k}$ is $\bigvee$-free with $c(P_2,\cR_{k+1})=(k+1)|M_{k+1}|$. It is conjectured that there exists a family $\cM_{\lfloor n/2\rfloor+1}$ with the above property such that $|\cM_{\lfloor n/2\rfloor+1}|=(1-o(1))\binom{[n]}{\lfloor n/2\rfloor}/(\lfloor n/2\rfloor+1)$ holds. Similarly, writing $\overline{\cM}_{n-k+1}$ for $\{[n]\setminus M:M \in \cM_{n-k+1}\}$ the construction $\cT_k:=\cM_{k+1}\cup \binom{[n]}{k}\cup \overline{\cM}_{n-k+1}$ is $B$-free. The above conjecture would yield $c(P_3,\cT_{\lfloor n/2\rfloor})=(1-o(1))\binom{n}{\lfloor n/2\rfloor}$. 

\bigskip
In Section 4, we proved that apart from a polynomial factor  \cjref{genconj} holds for complete multi-level posets, i.e. there exists a sequence $\cF_n$ of families that consists of full levels such that $La(n,P_{l+1},K_{r_1,r_2,\dots,r_l})\le n^kc(\cF_n, K_{r_1,r_2,\dots,r_l})$ for some constant $k=k(K_{r_1,r_2,\dots,r_l})$. To improve this result or to completely get rid of the polynomial factor one would need to improve \tref{rtuples} or rather to determine the intersection profile polytope of antichains.

\end{document}